% Created: May 7, 2016
% Edited  : Oct 15, 2020

\documentclass[a4paper,11pt,reqno]{amsart}
\usepackage{amsfonts}
\usepackage{amsmath, amssymb, amsthm}
\usepackage[foot]{amsaddr}
\usepackage{graphicx}
\usepackage{subfigure}
\usepackage{color}
\usepackage{xspace}
\usepackage{framed}
\usepackage{algorithm}
\usepackage[noend]{algpseudocode}
\usepackage{multirow}
\usepackage[a4paper]{geometry}
\usepackage{amsmath}
\usepackage{amssymb}
\usepackage{xr}
%\externaldocument{supplementary}

\geometry{top=1.5in, bottom=1.5in,
left=1in, right=1in}

\setlength{\parskip}{0.5em}

\newtheorem{theorem}{Theorem}

\newtheorem{example}{Example}
\newtheorem{lemma}{Lemma}

\newtheorem{proposition}{Proposition}
\newtheorem{remark}{Remark}

\renewcommand{\Pr}{\mathbb{P}} 
\newcommand{\E}{\mathbb{E}} 

\begin{document}

\title[RWPI and applications to Machine
Learning]{%\textnormal{ Supplementary material to the paper,}\\ ``
  \hspace{40pt}
  \large \lowercase{
    \uppercase{R}obust \uppercase{W}asserstein \uppercase{P}rofile \uppercase{I}nference
    and \newline 
   \uppercase{A}pplications to \uppercase{M}achine \uppercase{L}earning}
}

\author[Blanchet]{Jose Blanchet}
\address[A1]{Management Science and Engineering, Stanford University}

\author[Kang]{Yang Kang}
\address[A2]{Department of Statistics, Columbia University}

\author[Murthy]{Karthyek Murthy}
\address[A3]{Engineering Systems \& Design, Singapore University of
  Technology \& Design} 

\email{jose.blanchet@stanford.edu, yangkang@stat.columbia.edu,
  karthyek\_murthy@sutd.edu.sg} 

\maketitle
\begin{abstract}
  We show that several machine learning estimators, including
  square-root LASSO (Least Absolute Shrinkage and Selection) and
  regularized logistic regression can be represented as solutions to
  distributionally robust optimization (DRO) problems. The associated
  uncertainty regions are based on suitably defined Wasserstein
  distances. Hence, our representations allow us to view
  regularization as a result of introducing an artificial adversary
  that perturbs the empirical distribution to account for
  out-of-sample effects in loss estimation. In addition, we introduce
  RWPI (Robust Wasserstein Profile Inference), a novel inference
  methodology which extends the use of methods inspired by Empirical
  Likelihood to the setting of optimal transport costs (of which
  Wasserstein distances are a particular case). We use RWPI to show
  how to optimally select the size of uncertainty regions, and as a
  consequence, we are able to choose regularization parameters for
  these machine learning estimators without the use of cross
  validation. Numerical experiments are also given to validate our
  theoretical findings.
\end{abstract}

% \keywords{Distributionally Robust Optimization; Wasserstein distances;
%   Regularization; Square-root Lasso; Logistic regression; Support
%   Vector Machines; Limit characterization of optimal Wasserstein ball
%   radius and regularization parameter; Empirical likelihood}

%\ams{60F05}{62J05;62J12} 

\section{Introduction\label{Sec-Introduction}}
Regularization has become crucial in machine learning practice and the
goal of this paper is to revisit the idea of regularization from an
optimal transport perspective. Specifically, we show that the role of
regularization in machine learning can often be interpreted as the
result of optimally transporting mass from the empirical measure in
order to maximize a certain loss under a budget constraint. Thus, our
results connect directly optimal transport phenomena (a classical
concept in probability reviewed in Section \ref{Sec-OT-Wass-Defn}) to
regularization (a key tool in machine learning to be discussed in the
sequel).

Moreover, this connection will show that the so-called regularization
parameter (i.e. the coefficient of the regularization term) coincides
with the size of the budget constraint by which we permit mass
transportation to occur.  
% In contrast to the prevailing concentration inequalites based
% approaches for choosing this budget constraint (see
% \cite{shafieezadeh-abadeh_distributionally_2015,
% esfahani_data-driven_2015,gao_distributionally_2016} that decays
% nongracefully with dimensions, we shall
As we shall see, the budget constraint has a natural interpretation
based on a Distributionally Robust Optimization (DRO) formulation,
which in turn allows us to define a reasonable optimization criterion
for the regularization parameter. Thus, our approach uses optimal mass
transportation phenomena to explain the nature of regularization and
how to select the regularization parameter in several machine learning
estimators -- including square-root LASSO (Least Absolute Shrinkage
and Selection), Regularized Logistic Regression, among others.

The size of the budget constraint is also referred as the radius (or
size) of the uncertainty set in the literature of Distributionally
Robust Optimization (DRO). The method that we develop for optimally
choosing this budget constraint can actually be applied to a wide
range of inference and decision problems, but we have focused our
discussion on machine learning applications because of the substantial
amount of activity that the area has generated, and as well to
demonstrate the utility of the tools that are commonly used in applied
probability to this rapidly growing area.

\subsection{Regularization in Linear
  Regression\label{Sect_Intro_RWPI_LASSO}}
In order to introduce the proposed method for optimally choosing the
radius of the uncertainty set, let us walk through a simple
application in a familiar context, namely, that of linear regression.
Throughout the paper any vector is understood to be a column vector
and the transpose of $x$ is denoted by $x^{T}.$ We use the notation
$\E_{\Pr}[\cdot]$ to denote expectation with respect to a probability
distribution $\Pr$.

\begin{example}[Square-root Lasso]
\label{Ex_GLasso}
\textnormal{\sloppy{Consider a training data set $\{(X_{1},Y_{1}),\ldots,(X_{n}%
  ,Y_{n})\}$,} where the input $X_{i}\in\mathbb{R}^{d}$ is a vector of
$d$ predictor variables, and $Y_{i}\in\mathbb{R}$ is the response
variable.  It is postulated that
\[
Y_{i}=\beta_{\ast}^{T}X_{i}+e_{i},
\]
for some $\beta_{\ast}\in\mathbb{R}^{d}$ and errors $\left\{ e_{1}%
  ,...,e_{n}\right\} $. Under suitable statistical assumptions one may
be interested in estimating $\beta_{\ast}$. Underlying is a general
loss function, $l( x,y;\beta) $, which we shall take for
simplicity in this discussion to be the quadratic loss, namely,
$l(x,y;\beta)=( y-\beta^{T}x) ^{2}$. Let $\Pr_{n}$ denote the
empirical distribution: 
\[
\Pr_{n}\left(  dx,dy\right)  :=\frac{1}{n}\sum_{i=1}^{n}\mathcal{\delta
}_{\left\{  (X_{i},Y_{i})\right\}  }(dx,dy).
\]
Over the last two decades, various regularized estimators have been
introduced and studied. Many of them have gained substantial
popularity because of their good empirical performance and insightful
theoretical properties, (see, for example,
\cite{tibshirani_regression_1996} for an early reference and
\cite{hastie_elements_2005} for a discussion on regularized
estimators). One such regularized estimator, implemented, for example
in the \textquotedblleft flare\textquotedblright\ package, see
\cite{li_flare_2015}, is the so-called square-root LASSO estimator;
which is obtained by solving the following convex optimization problem
in $\beta$
\begin{equation}
\min_{\beta\in\mathbb{R}^{d}}\left\{  \sqrt{\E_{\Pr_{n}}\left[  l\left(
X,Y;\beta\right)  \right]  }+\lambda\left\Vert \beta\right\Vert _{1}\right\}
=\min_{\beta\in\mathbb{R}^{d}}\left\{  \sqrt{\frac{1}{n}\sum_{i=1}^{n}l\left(
X_{i},Y_{i};\beta\right)  }+\lambda\left\Vert \beta\right\Vert _{1}\right\}  ,
\label{RegEs}%
\end{equation}
where $\left\Vert \beta\right\Vert _{p}$ denotes the
$\ell_p-$norm. The parameter $\lambda$, commonly referred to as the
regularization parameter, is crucial for the performance of the
algorithm.  It is often chosen using cross validation, a procedure
that iterates over multitude of choices of $\lambda$ in order to
choose the best.} 
\end{example}

\subsubsection{DRO representation of square-root LASSO.
  \label{Subsec_DR_Rep_LASSO}} One of our contributions in this paper
(see Section \ref{Sec-ML-RWPI}) is a representation of (\ref{RegEs})
in terms of a Distributionally Robust Optimization formulation. We
construct a discrepancy measure, $\mathcal{D}%
_{c}\left( \Pr,\mathbb{Q}\right) $, corresponding to a
Wasserstein-type distance between two probability measures $\Pr$ and
$\mathbb{Q}$ which is defined in terms of a suitable transportation
cost function $c(\cdot)$. If $c( \cdot) $ is based on the
$\ell_{q}-$distance (for $q > 1$), we show that
\begin{equation}
  \min_{\beta\in\mathbb{R}^{d}}\left\{  \sqrt{\E_{\Pr_{n}}\left[  l\left(
          X,Y;\beta\right)  \right]  }+\lambda\left\Vert \beta\right\Vert _{p}\right\}
  ^{2}=\min_{\beta\in\mathbb{R}^{d}}\ \ \max_{\Pr:\ \mathcal{D}_{c}(\Pr,\Pr_{n}%
    )\leq\delta}\ \ \E_{\Pr}\left[  l(X,Y;\beta)\right]  ,\label{AUX_DRO_1}%
\end{equation}
where $1/p+1/q=1$ and $\lambda = \sqrt{\delta}.$ We can gain a great
deal of insight from (\ref{AUX_DRO_1}). For example, note that the
regularization parameter, $\lambda = \sqrt{\delta},$ is fully
determined by the size (or `radius') of the uncertainty, $\delta$, in
the distributionally robust optimization formulation on the right hand
side of (\ref{AUX_DRO_1}). In addition, we can interpret
(\ref{AUX_DRO_1}) as a game in which an artificial adversary is
introduced in order to explore and quantify out-of-sample effects in
our estimates of the expected loss.

%Moreover, we can see the duality arising between the choice of transportation
%cost function and the norm appearing in (\ref{AUX_DRO_1}). This motivates the
%question of how to choose a suitable discrepancy measure $\mathcal{D}%
%_{c}\left(  \cdot\right)  $. We do not attempt to answer this important in
%this paper. However, in (CITATION), using the results in this paper, we
%introduce a parametric family of discrepancies which we calibrate in a data
%driven way to improve error performance. But it is the flexibility of the
%discrepancy design which we believe open up a great deal of research
%opportunities, which we plan to continue exploring in future research.

\subsubsection{Optimal choice of the radius $\delta$. 
  \label{Subsec_Opt_Reg_LASSO_Int}}

The set
$\mathcal{U}_{\delta}(\Pr_{n})=\{\Pr:\mathcal{D}_{c}(\Pr,\Pr_{n})\leq\delta\}$
is called the uncertainty set in the language of distributionally
robust optimization, and it represents the class of models that are,
in some sense, plausible variations of $\Pr_{n}$. Note that
$\mathcal{U}_{\delta}(\Pr_{n})$ is precisely the feasible region over
which the maximization is taken in (\ref{AUX_DRO_1}). Then we define
the collection,
\begin{align}
  \Lambda_{n}(\delta) := \bigcup_{\Pr \, \in \,
  \mathcal{U}_\delta(\Pr_n)} \hspace{-10pt} % \left\{
  \arg \min_{\beta \in \mathbb{R}^d} \E_\Pr\left[l(X,Y;\beta)\right], 
%  :\Pr\in\mathcal{U}_{\delta}\left( \Pr_{n}\right) \right\},
\label{Lambda-n-Defn}
\end{align}
comprising optimal $\beta$ for every $\Pr \in \mathcal{U}_\delta(\Pr_n)$
to be the set of \textit{plausible} selections of the parameter
$\beta_\ast.$
%% % Observe that 
% For every $\Pr\in\mathcal{U}_{\delta}(\Pr_{n})$ let $\beta\left( \Pr\right)$
% be the collection
% $\arg \min_{\beta \in \mathbb{R}^d} \E_\Pr\left[l(X,Y;\beta)\right].$
% Consider 
% $\Lambda_{n}(\delta) =\{\beta\left( \Pr\right)
% :\Pr\in\mathcal{U}_{\delta}\left( \Pr_{n}\right) \}$ to be
% thought of as the set of plausible selections of the parameter
% $\beta_{\ast}$.  
% Therefore, f
For $\delta$ chosen sufficiently large, the set
$\Lambda_{n}( \delta) $ is a natural confidence region for
$\beta_{\ast}$. Moreover, we shall see that any $\beta$ that solves
$\inf_\beta \sup_{\Pr \in \mathcal{U}_\delta(\Pr_n)} \E\left[
  l(X,Y;\beta)\right]$ is a member of $\Lambda_n(\delta).$

Given these interpretations, it is natural to select a confidence
level, $1-\alpha$, and then choose $\delta = \delta_n^\ast$
optimally via, 
\begin{equation}
  \delta_{n}^{\ast}=\min\{\delta > 0: \Pr \left(
    \beta_{\ast}\in\Lambda_{n}\left( 
      \delta\right)  \right)  \geq 1-\alpha\}.\label{Opt_Choice_Delta}  
\end{equation}
In words, the optimization criterion can be stated as finding the
smallest $\delta$ such that $\beta_{\ast}$ is, itself, a plausible
selection with $1-\alpha$ confidence. % We also verify that the point
% estimator obtained from square root LASSO\ is itself a plausible
% estimate. 
Essentially, given a desired confidence level $1-\alpha,$ we seek to
choose a $\delta$ just large enough such that $\Lambda_n(\delta)$ is a
$(1-\alpha)-$confidence region for the parameter $\beta_\ast.$ As we
shall see in Section \ref{Sec-ML-RWP}, this choice ensures that any
$\beta$ that minimizes
$\inf_\beta \sup_{\Pr \in \mathcal{U}_\delta(\Pr_n)} \E\left[
  l(X,Y;\beta)\right]$ is indeed in the confidence region
$\Lambda_n(\delta).$ We next explain how to solve the optimization
problem in \eqref{Opt_Choice_Delta} asymptotically as
$n\rightarrow\infty$.

\subsubsection{The associated Wasserstein Profile Function.
\label{Subsec_Intro_WPF_LASSO}}
In order to asymptotically solve (\ref{Opt_Choice_Delta}) we introduce
a novel statistical inference methodology, which we call RWPI (Robust
Wasserstein-distance Profile-based Inference -- pronounced similar to
Rupee)% \footnote{The acronym RWPI is chosen to sound just as RUPI ("u"
% as in put and "i" as in bit). In turn, RUPI\ means beautiful
%in  Sanskrit.})
. RWPI can be understood as an extension of Empirical
Likelihood (EL) that uses optimal transport cost rather than the
likelihood. The extension is not just a formality, as we shall see,
because different phenomena and scalings arise relative to EL.

We next illustrate how $\delta_{n}^{\ast}$ in \eqref{Opt_Choice_Delta}
corresponds to the quantile of a certain object which we call the
Robust Wasserstein Profile (RWP) function evaluated at
$\beta_{\ast}$. This will motivate a systematic study of the RWP
function as the sample size, $n$, increases.

Observe by convexity of the loss function that
$\beta\in\Lambda_{\delta }( \Pr_{n}) $ if and only if there exists
$\Pr\in\mathcal{U}_{\delta }(\Pr_{n})$ such that $\beta$ satisfies the
first order optimality condition, namely,%
\begin{equation}
  D_\beta \E_\Pr\left[ l(X,Y;\beta)\right] = \E_{\Pr}\left[  \left(
      Y-\beta^{T}X\right)  X\right]  =\mathbf{0} 
  .\label{EST_EQN}%
\end{equation}
We then introduce the following object, which is the RWP function associated
with the estimating equation (\ref{EST_EQN}),
\begin{equation}
  R_{n}( \beta)  =\inf\left\{  \mathcal{D}_{c}\left(  \Pr,\Pr_{n}%
    \right)  :\E_{\Pr}\left[  \left(  Y-\beta^{T}X\right)  X\right]  =\mathbf{0}%
  \right\}  .\label{PROF_R}%
\end{equation}
It turns out that the infimum is achieved in the previous expression,
so we can write $\min$ instead; this is not crucial for our discussion
but it is sometimes helpful to keep in mind. Using this definition of
$R_{n}(\beta),$ we can see immediately that the events, 
\[
  \{R_{n}(\beta_{\ast}) \leq\delta\}=\{\beta_{\ast}\in\Lambda _{n}(\delta) \},
\]
which implies that $\delta_{n}^{\ast}$ is precisely the $1-\alpha$
quantile, $\chi_{_{1-\alpha}}$, of $R_{n}( \beta_{\ast});$ that is
\[
  \delta_{n}^{\ast}=\chi_{_{1-\alpha}}=\inf\big\{z:\Pr\left( R_{n}(
    \beta_{\ast}) \leq z\right) \geq1-\alpha\big\}.
\]
Moreover, note that $R_{n}(\beta)$ allows to provide an explicit
characterization of $\Lambda_{n}(\chi_{_{1-\alpha}}),$ namely, 
\[
  \Lambda_{n}( \chi_{_{1-\alpha}}) =\{\beta:R_{n}(\beta)
  \leq\chi_{_{1-\alpha}}\}.
\]
So, $\Lambda_{n}\left(  \chi_{_{1-\alpha}}\right)  = \{\beta:
R_n(\beta) \leq \chi_{1-\alpha}\}$ is a $(1-\alpha)$-confidence
region for $\beta^{\ast}$.

\subsubsection{Further intuition behind the RWP function.
\label{Subsec_Intuit_RWP}}
In order to further explain the role of $R_{n}(\beta_{\ast})$, let us define
% $\mathcal{P}_{opt}$ to be the set of probability measures, $P$, supported on a
% subset of $\mathbb{R}^{d}\times\mathbb{R}$ for which (\ref{EST_\EQN}) holds
% with $\beta=\beta_{\ast}$. Formally,
$ \mathcal{P}_{opt}:=\left\{ \Pr:\E_{\Pr}\left[ \big(Y-\beta_{\ast}^{T}%
    X\big)X\right] =\mathbf{0}\right\}.$ In words, $\mathcal{P}_{opt}$
is the set of probability measures for which $\beta_{\ast}$ is an
optimal risk minimization parameter. Naturally, the distribution of
$(X,Y),$ from which the samples are generated, is an element of
$\mathcal{P}_{opt}.$ Since
% Observe that using this definition we can
% write
%\[
$R_{n}(\beta_{\ast})=\inf\{\mathcal{D}_{c}(\Pr,\Pr_{n}):\Pr\in\mathcal{P}_{opt}\},$
%\]
the set $\{\Pr:\mathcal{D}_{c}(\Pr,\Pr_{n})\leq R_{n}(\beta_{\ast})\}$
denotes the smallest uncertainty region around $\Pr_{n}$ (in terms of
$\mathcal{D}_{c}$) for which there exists a distribution $\Pr$
satisfying the optimality condition
$\E_{\Pr}\left[ (Y-\beta_{\ast}^{T}X)X\right] =\mathbf{0}.$ See Figure
\ref{Fig-Intuition-DRRegression} for a pictorial representation of
$\mathcal{P}_{opt}$ and $R_n(\beta^\ast).$
% \hFig{Hi!}
\begin{figure}[h]
  \begin{center}
  \includegraphics[width=10cm]{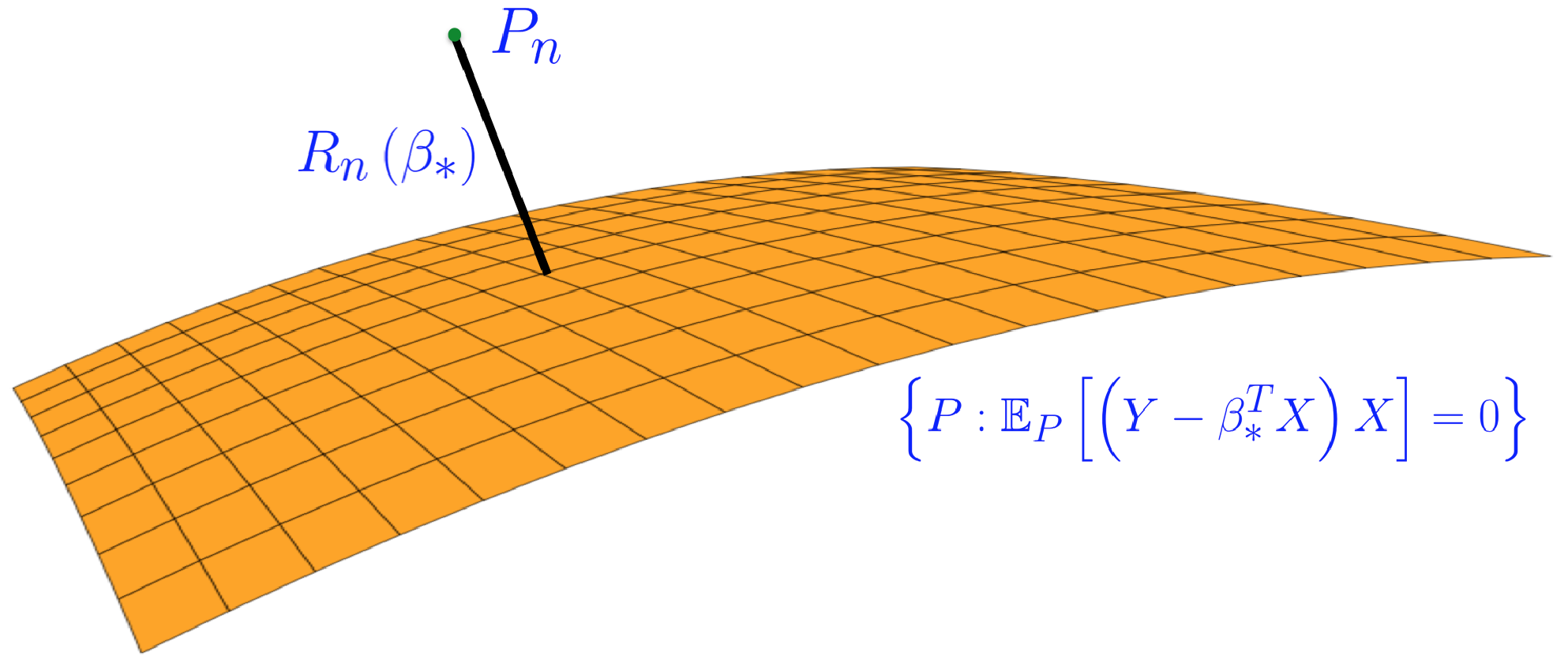}
  \caption{Illustration of RWP function evaluated at $\beta_{\ast}$}
  \end{center} 
  \label{Fig-Intuition-DRRegression} 
\end{figure}

In summary, $R_{n}(\beta_{\ast})$ denotes the smallest size of uncertainty
that makes $\beta_{\ast}$ a \textit{plausible choice}. If we were to select a
radius of uncertainty smaller than $R_{n}(\beta_{\ast})$, then no probability
measure in the neighborhood will satisfy the optimality condition
$\E_{\Pr}\left[  (Y-\beta_{\ast}^{T}X)X\right]  =\mathbf{0}$. On the other hand,
if $\delta>R_{n}(\beta_{\ast}),$ the set
\[
\left\{  \Pr:\E_{\Pr}\big[(Y-\beta_{\ast}^{T}X)X\big]=\mathbf{0},\mathcal{D}%
_{c}\big(\Pr,\Pr_{n}\big)\leq\delta\right\}
\] is nonempty. % Given the importance of $R_{n}(\beta_{\ast})$ in the
% optimal selection of the regularization parameter $\lambda$, it is of
% interest to analyze its asymptotic properties as $n\rightarrow\infty$.

% It is important to note, however, that the estimating equations given
% in (\ref{EST_EQN}) are just one of the potentially many ways in which
% $\beta_{\ast}$ can be characterized. It is well-known, in the case of
% Gaussian errors, that there is an intimate connection between
% (\ref{EST_EQN}) and maximum likelihood estimation.  In general it
% appears sensible, at least from the standpoint of philosophical
% consistency to connect the choice of estimating equation with the loss
% function $l\left( x,y;\beta\right) $ used in the Distributionally
% Robust Representation (\ref{AUX_DRO_1}).

\subsection{A broader perspective of our contribution}

The previous discussion in the context of linear regression highlights two key
ideas: a) the RWP function as a key object of analysis, and b) the role of
distributionally robust representation of regularized estimators.

The RWP function can be applied much more broadly than in the context
of regularized estimators. We shall study the RWP function for
estimating equations\ generally and systematically but we showcase the
use of the RWP function only in the context of % choosing the radius of
% uncertainty. 
optimal regularization.

Broadly speaking, RWPI can be seen as a statistical methodology that
utilizes a suitably defined RWP function to estimate a parameter of
interest. From a philosophical standpoint, RWPI borrows heavily from
Empirical Likelihood (EL), introduced in the seminal work of
\cite{owen_empirical_1988,owen_empirical_1990}. There are important
methodological differences, however, as we shall discuss in the
sequel. In the last three decades, there have been a great deal of
successful applications of Empirical Likelihood for inference
\cite{owen_empirical_1991,zhou2015empirical,bravo_empirical_2004,hjort_extending_2009,qin_empirical_1994}. In 
principle all of those applications can be revisited using the RWP
function and its ramifications.

\smallskip

We now provide a more precise description of our contributions:

\textbf{A)} We explain how, by judiciously choosing $\mathcal{D}%
_{c}\left( \cdot\right) $, we can define a family of regularized
regression estimators (See Section \ref{Sec-ML-RWPI}). In particular,
we show how square-root LASSO (see Theorem \ref{Cor-lp-good-equiv}),
regularized logistic regression and support vector machines (see
Theorem \ref{Cor-Classification-Equivalences}) arise as particular
cases of suitable DRO formulations.

\bigskip

\textbf{B)} We derive general limit theorems for the asymptotic
distribution (as the sample size increases) of the RWP function
defined for general estimating equations. These limit theorems,
derived in Section \ref{Sec_Quad_WPF}, allow one to employ RWPI to
perform inference and choose the radius of uncertainty $\delta$ in
settings that are more general than linear/logistic regression.

\bigskip

\textbf{C) }We use our results from \textbf{B)} to obtain
prescriptions for regularization parameters in square-root LASSO\ and
regularized logistic regression settings (see Section
\ref{Sec-ML-RWP}). We also illustrate how coverage results for the
optimal risk, that demonstrate $O(n^{-1/2})$ rate of convergence, are
obtained immediately as a consequence of choosing $\delta \geq
R_n(\beta_\ast).$ 

% obtained when the radius of uncertainty, $\delta,$ is
% chosen using the RWP function.

\bigskip

\textbf{D) } We analyze our regularization selection in the
high-dimensional setting for square-root LASSO. Under standard
regularity conditions, we show (see Theorem
\ref{Thm-RWP-UB-d-Growing}) that the regularization parameter
$\lambda$ might be chosen as, 
\[
  \lambda= \frac{\pi}{\pi- 2} \frac{\Phi^{-1}\left(
      1-\alpha/2d\right) }%
  {\sqrt{n}},
\]
where $\Phi\left( \cdot\right) $ is the cumulative distribution of the
standard normal random variable and $1-\alpha$ is a user-specified
confidence level. The behavior of $\lambda$ as a function of $n$ and
$d$ is consistent with regularization selections studied in the
literature motivated by different considerations (see Section
\ref{Sec-Nonasymp-Gen-Lasso} for further details).
%We analyze our regularization selection in the
%high-dimensional setting for generalized LASSO. We
%actually show that in this setting our analysis recovers
%regularization results obtained for generalized LASSO in
%high-dimensions, see Theorem \ref{Thm-RWP-UB-d-Growing} and the remark
%following Theorem \ref{Thm-RWP-UB-d-Growing}. It is reassuring that
%our optimality criterion provides a prescription which is consistent
%with regularization selections studied in the literature and which are
%motivated from different considerations.

\bigskip

\textbf{E) } We analyze the empirical performance of RWPI based
selection of regularization parameter in the context of square-root
LASSO. In Section \ref{Sec-Num-Eg}, we compare the performance of RWPI
based optimal regularization with that of cross-validation based
approach on both simulated and real data. We conclude that RWPI based
approach yields a similar performance, without having to repeat the
algorithm over various choices of regularization parameters (as done
in cross-validation).  \smallskip

We now provide a discussion on topics which are related to RWPI.

\subsection{On related literature in Robust Optimization,
  Distributionally Robust Optimization and Optimal Transport}
Connections between robust optimization and regularization procedures
such as LASSO and Support Vector Machines have been studied in the
literature, see
\cite{xu_robust_2009,xu_robustness_2009,BERTSIMAS2018931}. The methods
proposed here differ subtly: While the papers
\cite{xu_robust_2009,xu_robustness_2009} add deterministic
perturbations of a certain size to the predictor vectors $X$ to
quantify uncertainty, the Distributionally Robust Representations that
we derive measure perturbations in terms of deviations from the
empirical distribution. While this change may appear cosmetic, it
brings a significant advantage: measuring deviations from the
empirical distribution, as we shall see, allows us to derive suitable
limit laws (or) probabilistic inequalities that can be used to give a
systematic prescription for the radius of uncertainty, $\delta$, in
the definition of the uncertainty region
$\mathcal{U}_{\delta}(\Pr_{n}) =\{\Pr:
\mathcal{D}_{c}(\Pr,\Pr_{n})\leq\delta\}$.

It is well-understood that as the number of samples $n$ increase, the
expected deviation of the empirical distribution from the true
distribution decays to zero, as a function of $n,$ at a specific
rate. To begin with, as a direct approach towards choosing the size of
uncertainty $\delta$, one can perhaps use a suitable concentration
inequality that measures such rate of convergence in terms of
Wasserstein distances (see, for example, \cite{fournier_rate_2015},
and references therein).  Such a simple specification of the size of
uncertainty, suitably as a function of $n$, does not arise naturally
in the deterministic robust optimization approaches in
\cite{xu_robust_2009,xu_robustness_2009}. 

For an application of these concentration inequalities to choose the
size of uncertainty set in the context of distributionally robust
logistic regression and data-driven DRO, refer
\cite{shafieezadeh-abadeh_distributionally_2015,esfahani_data-driven_2015-1}. The
exact representation for regularized logistic regression we derive
later in Section \ref{Sec-ML-DRR} can be can be seen as an extension,
in which the approximate representation described in \cite[Remark
1]{shafieezadeh-abadeh_distributionally_2015} is made to coincide
exactly with the regularized logistic regression estimator that has
been widely used in practice. It is important to note that, despite
imposing severe tail assumptions, the concentration inequalities used
to choose the radius of uncertainty set in
\cite{shafieezadeh-abadeh_distributionally_2015,esfahani_data-driven_2015-1}
dictate the size of uncertainty to decay at the rate $O(n^{-1/d})$;
unfortunately, this prescription scales non-gracefully as the number
of dimensions $d$ increase and the resulting coverage guarantees
suffer from a poor rate of convergence (see, for example,
\cite[Theorem
2]{shafieezadeh-abadeh_distributionally_2015},\cite[Theorem
3.5]{esfahani_data-driven_2015-1}).  Since most of the modern learning
and decision problems have huge number of covariates, application of
such concentration inequalities with poor rate of decay with
dimensions may not be most suitable for applications.

In contrast to directly using concentration inequalities, as we shall
see, the prescription obtained via RWPI typically has a rate of
convergence of order $O\left( n^{-1/2}\right) $ as
$n\rightarrow\infty$ (for fixed $d$). In particular, as we discuss in
the case of LASSO, according to our results corresponding to
contribution \textbf{E)}, RWPI based prescription of the size of
uncertainty actually can be shown (under suitable regularity
conditions) to decay at rate $O(\sqrt{\log d/n})$ (uniformly over $d$
and $n$ such that $\log^2d \ll n$), which is in agreement with the
findings of high-dimensional statistics literature (see
\cite{candes2007, negahban_unified_2012,belloni_square-root_2011} and
references therein). A profile function based approach towards
calibrating the radius of uncertainty in the context of empirical
likelihood based DRO can be found in
\cite{lam_empirical_2016,duchi2016statistics,gotoh2017calibration,lam2016recovering}.

Although we have focused our discussion on the context of regularized
estimators, our results are directly applicable to the area of
data-driven Distributionally Robust Optimization whenever the
uncertainty sets are defined in terms of a Wasserstein distance or,
more generally, an optimal transport metric. In particular, consider a
distributionally robust formulation of the form
\[
\min_{\theta:G\left(  \theta\right)  \leq0}\ \ \max_{\Pr:\ \mathcal{D}%
_{c}(\Pr,\Pr_{n})\leq\delta}\ \ \E_{\Pr}\left[  H(W,\theta)\right]  ,
\]
for a random element $W$ and a convex function $H(W,\cdot)$ defined
over a convex region $\{\theta:G\left( \theta\right) \leq0\}$
(assuming $G:\mathbb{R}^{d}\rightarrow\mathbb{R}$ convex). Here
$\Pr_{n}$ is the empirical measure of the sample
$\{W_{1},...,W_{n}\}$. One can then follow a reasoning parallel to
what we advocate throughout our LASSO\ discussion.  Argue, by applying
the corresponding KKT (Karush-Kuhn-Tucker)\ conditions, if possible,
that an optimal solution $\theta_{\ast}$ to the problem
\[
\min_{\theta:G\left(  \theta\right)  \leq0}\E_{\Pr_{true}}[H\left(
W,\theta\right)  ]
\]
satisfies a system of estimating equations of the form
$\E_{\Pr_{true}}[h\left( W,\theta_{\ast}\right) ]=0,$
% \begin{equation}
% E_{\Pr_{true}}[h\left(  W,\theta_{\ast}\right)  ]=0, \label{EST_EQ_DRO_G}%
% \end{equation}
for a suitable $h\left( \cdot\right) $ (where $\Pr_{true}$ is the weak
limit of the empirical measure $\Pr_{n}$ as $n\rightarrow\infty$). Then,
given a confidence level $1-\alpha$, one should choose $\delta$ as the
$(1-\alpha)$ quantile of the RWP function, 
\[
R_{n}(  \theta_{\ast})  =\inf\{\mathcal{D}_{c}(\Pr,\Pr_{n}%
):\E_{\Pr}[h\left(  W,\theta_{\ast}\right)  ]= \mathbf{0}\}.
\]
The results in Section 2 can then be used directly to approximate the
$(1-\alpha)$-quantile of $R_{n}\left( \theta_{\ast}\right) $. Just as
we explain in our discussion of the square-root LASSO example, the
selection of $\delta$ is the smallest possible choice for which
$\theta_{\ast}$ is plausible with $(1-\alpha)$ confidence.

%[[SHALL WE ADD THIS PARAGRAPH?: There is a substantial literature connecting
%regularization and robust optimization (ADD REFERENCES BERTSIMAS ETC.)
%However, the selection of an optimal regularization parameter via the
%Wasserstein Profile function is a significant feature of our work in this
%regard.]]

\subsection{Connections to related inference literature}
We next discuss the connections between RWPI and EL. In EL one builds
a Profile Likelihood for an estimating equation. For instance, in the
context of EL applied to estimating $\beta$ satisfying
(\ref{EST_EQN}), one would build a Profile Likelihood Function in
which the optimization object is defined as the likelihood (or the
log-likelihood) between a given distribution $\Pr$ with respect to
$\Pr_{n}$. Therefore, the analogue of the uncertainty set
$\{\Pr:\mathcal{D}_{c}(\Pr,\Pr_{n})\leq\delta\}$, in the context of EL, will
typically contain distributions whose support coincides with that of
$\Pr_{n}$.  In contrast, the definition of the RWP function does not
require the likelihood between an alternative plausible model $\Pr$, and
the empirical distribution, $\Pr_{n}$, to exist. Owing to this
flexibility, for example, we are able to establish the connection
between regularization estimators and a suitable profile function.

There are other potential benefits of using a profile function which does not
restrict the support of alternative plausible models. For example, it has been
observed in the literature that in some settings EL\ might exhibit low
coverage \cite{owen_empirical_2001,chen_smoothed_1993,wu_weighted_2004}. It is
not the goal of this paper to examine the coverage properties of RWPI
systematically, but it is conceivable that relaxing the support of alternative
plausible models, as RWPI does, can translate into desirable coverage properties.

From a technical standpoint, the definition of the Profile Function in
EL gives rise to a finite dimensional optimization problem. Moreover,
there is a substantial amount of smoothness in the optimization
problems defining the EL Profile Function. This smoothness can be
leveraged in order to obtain the asymptotic distribution of the
Profile Function as the sample size increases. In contrast, the
optimization problem underlying the definition of RWP function\ in
RWPI is an infinite dimensional linear program. Therefore, the
mathematical techniques required to analyze the associated RWP
function are different (more involved) than the ones which are
commonly used in the EL setting.

A significant advantage of EL, however, is that the limiting
distribution of the associated Profile Function is typically
chi-squared. Moreover, this distribution is self-normalized in the
sense that no parameters need to be estimated from the
data. Unfortunately, this is typically not the case in using RWPI. In
many settings, however, the parameters of the distribution can be
easily estimated from the data itself.

Another methodology, strongly related to RWPI, has been studied
recently by the name of SOS (Sample-Out-of-Sample) inference
\cite{blanchet_sample_2016}. A suitable RWP function is built in this
setting as well, but the support of alternative plausible models is
assumed to be finite (but not necessarily equal to that of
$\Pr_{n}$). Instead, the support of alternative plausible models is
assumed to be generated not only by the available data, but additional
samples from independent distributions (defined by the user). The
limit results obtained for the RWP function in the context of SOS\ are
different from those obtained in this paper. For example, in the SOS
setting, the rates of convergence are dimension-dependent, which is
not the case in the RWPI. As explained in
\cite{blanchet_sample_2016,blanchet2017semi}, SOS inference is natural
in applications such as semi-supervised learning, in which massive
amounts of unlabeled data inform the support of the covariates. 

\subsection{Organization of the paper}
The rest of the paper is organized as follows. Section
\ref{Sec-ML-RWPI} corresponds to contribution \textbf{A)}; we first
introduce Wasserstein distances and then discuss distributionally
robust representations of popular machine learning algorithms. Section
\ref{Sec_WPF} deals with contribution\textbf{\ B)}; we discuss the RWP
function as an inference tool in a way which is parallel to the
Profile Likelihood in EL, and derive the asymptotic distribution of
the RWP function for general estimating equations.  Section
\ref{Sec-ML-RWP} discusses contributions \textbf{C)}, namely the
application of the results from \textbf{B)} for optimal
regularization. Our high-dimensional analysis of the RWP function in
the case of square-root LASSO\ is also presented in Section
\ref{Sec-ML-RWP}.  Numerical experiments using both simulated and real
data sets are given in Section \ref{Sec-Num-Eg}.  Proofs of all the
results are presented in the supplementary material \cite{supp_RWPI}
made available at the end of this article.

\section{Optimal Transport Definitions and DRO Representations of Machine
Learning Estimators \label{Sec-ML-RWPI}}
We begin with definitions of optimal transport costs and Wasserstein
distances. 

% We first introduce some notation to define optimal transport costs and
% Wasserstein distances. Following this, we provide DRO representations for
% machine learning estimators.

\subsection{Optimal Transport Costs and Wasserstein
Distances\label{Sec-OT-Wass-Defn}}

Let $c:\mathbb{R}^{m}\times\mathbb{R}^{m}\rightarrow\lbrack0,\infty]$
be any lower semi-continuous function such that $c(u,u)=0$ for every
$u \in \mathbb{R}^m.$ Given two probability distributions
$\Pr(\cdot) $ and $\mathbb{Q}(\cdot) $ supported on $\mathbb{R}^{m},$
the optimal transport cost or discrepancy between $\Pr$ and
$\mathbb{Q},$ denoted by $\mathcal{D}_{c}(\Pr,\mathbb{Q})$, is defined
as,
\begin{equation}
  \mathcal{D}_{c}(\Pr,\mathbb{Q})  =\inf\big\{\E_{\pi}\left[  c (
    U,W)  \right]  :\pi\in\mathcal{P}\left(  \mathbb{R}^{m} \times
    \mathbb{R}^{m}\right)  , \ \pi_{_{U}}=\Pr, \ \pi_{_{W}}= \mathbb{Q} \big\}.
\label{Discrepancy_Def}%
\end{equation}
Here, $\mathcal{P}\left( \mathbb{R}^{m} \times\mathbb{R}^{m} \right) $
is the set of joint probability distributions $\pi$ of $(U,W)$
supported on $\mathbb{R}^{m} \times\mathbb{R}^{m}$ and
$\pi_{_{U}},\pi_{_{W}}$ denote the marginals of $U$ and $W,$
respectively, under the joint distribution $\pi.$ Intuitively, the
quantity $c(u, w)$ can be interpreted as the cost of transporting unit
mass from $u$ in $\mathbb{R}^{m}$ to another element $w$ in
$\mathbb{R}^{m}.$ Then the expectation $\E_{\pi}[c(U,W)]$ corresponds
to the expected transport cost associated with the joint distribution
$\pi.$ %In the sequel, $\mathcal{D}_{c}(P,Q),$ defined in
% \eqref{Discrepancy_Def}, represents the optimal transport cost
% associated with probability measures $P$ and $Q.$

In addition to the stated assumptions on the cost function $c(\cdot),$
if $c^{1/\rho}$ satisfies the properties of a metric for any
$\rho > 1,$ then $\mathcal{D}_{c}^{1/\rho}(\Pr,\mathbb{Q}) $ defines a
metric between probability distributions (see
\cite{villani_optimal_2008} for a proof and other properties of
$D_{c}$). For example, if
$c( u,w) =\left\Vert u-w\right\Vert _{2}^{2}$, then $\rho=2$ yields
that $c( u,w) ^{1/2}=\left\Vert u-w\right\Vert _{2}$ is symmetric,
non-negative, lower semi-continuous and it satisfies the triangle
inequality. In that case,
\[
  \mathcal{D}_{c}^{1/2}(\Pr,\mathbb{Q}) =\inf\left\{
    \sqrt{\E_{\pi}\left[ \left\Vert U-W\right\Vert _{2}^{2}\right] }:\
    \pi\in\mathcal{P}\left( \mathbb{R}^{m} \times\mathbb{R}^{m}\right)
    ,\text{ }\pi_{U}=\Pr,\text{ }\pi _{W}=\mathbb{Q} \right\}
\]
coincides with the Wasserstein distance of order two. More generally,
if we choose $c^{1/\rho}\left( u,w\right) =\Vert u - w \Vert_q$ for
some $\rho, q\geq1,$ then $\mathcal{D}_{c}^{1/\rho}( \cdot) $ is the
known as the Wasserstein distance of order $\rho.$

Wasserstein distances metrize weak convergence of probability measures
under suitable moment assumptions and have received immense attention
in probability theory (see \cite{rachev1998mass, rachev1998massII,
  villani_optimal_2008} for a collection of classical
applications). In addition, earth-mover's distance, a particular
example of Wasserstein distances, has been of interest in image
processing (see \cite{Rubner2000, Solomon2014}). More recently,
optimal transport metrics and Wasserstein distances are being actively
investigated for its use in various machine learning applications (see
\cite{NIPS2015_5680, peyre2016gromov, NIPS2015_5679,
  srivastava2015wasp} and references therein for a growing list of new
applications).

Throughout this paper, we consider optimal transport costs
$\mathcal{D}_{c}(\cdot)$ for a judiciously chosen cost function
$c(\cdot) $ to result in formulations such as (\ref{AUX_DRO_1}). As we
shall see in Section \ref{Sec-ML-DRR}, it is useful to allow
$c(\cdot) $ to be lower semi-continuous and potentially be infinite in
some region. Thus our setting requires discrepancy choices which are
slightly more general than standard Wasserstein distances.

\subsection{DRO formulation using optimal transport costs}
A common theme in machine learning problems is to find the best fitting
parameter in a family of parameterized models that relate a vector of
predictor variables $X\in\mathbb{R}^{d}$ to a response $Y\in\mathbb{R}.$ In
this section, we shall focus on a useful class of such models, namely, linear
and logistic regression models. Associated with these models, we have a loss
function $l(X_{i},Y_{i};\beta)$ which evaluates the fit of regression
coefficient $\beta$ for the given data points $\{(X_{i},Y_{i}):i=1,\ldots
,n.\}$
%For several of these models we will recall the associated loss function $%
%l\left( x,y;\beta \right) $.
Then, just as we explained in the case of square-root LASSO in the
Introduction, our first step will be to show that regularized linear
and logistic regression estimators admit a Distributionally Robust
Optimization (DRO) formulation of the form,
\begin{equation}
\inf_{\beta\in\mathbb{R}^{d}}\sup_{\Pr:\mathcal{D}_{c}\big(\Pr,\Pr_{n}%
\big)\leq\delta}\E_{\Pr}\left[  l\big(X,Y;\beta\big)\right]  .\label{RWPI-DRO}%
\end{equation}

% Once we derive a representation of the form (\ref{RWPI-DRO}) then we
% will proceed, in the next section to find the optimal choice of
% $\delta$, which, as explained in the Introduction, will immediately
% characterize the optimal regularization parameter.

In contrast to the empirical risk minimization that performs well only
on the training data, the DRO problem (\ref{RWPI-DRO}) aims to find an
optimizer $\beta$ that performs uniformly well over all probability
measures in the neighborhood that can be perceived as perturbations to
the empirical training data distribution. Hence the solution to
(\ref{RWPI-DRO}) is said to be \textquotedblleft distributionally
robust\textquotedblright, and can be expected to generalize
better. See \cite{xu_robust_2009,xu_robustness_2009} and
\cite{shafieezadeh-abadeh_distributionally_2015} for earlier works
that relate robustness and generalization.

Recasting regularized regression as a DRO problem of form
\eqref{RWPI-DRO} lets us view these regularized estimators under the
lens of distributional robustness. The regularized estimators that we
consider in this paper include the regularized logistic regression
estimators in Example \ref{Ex_RLR} below, the support vector machines
(see \cite{hastie_elements_2005}) and the family of $\ell_p-$norm
penalized linear regression estimators of the form,
\begin{align}
  \min_{\beta\in\mathbb{R}^{d}}\bigg\{\sqrt{\E_{\Pr_{n}}\left[  l\left(
  X,Y;\beta\right)  \right]  }+\lambda\left\Vert \beta\right\Vert _{p}\bigg\},\label{Lp-Pen-LR}\end{align}
for any $p\in\lbrack1,\infty).$ This collection  includes the square-root Lasso estimator
described in Example \ref{Ex_GLasso} as a special case where $p=1.$

% \begin{example} [square-root-LASSO]\label{Ex_GLasso} \textnormal{We
%     have already started
% discussing this example in the Introduction, namely given a set of training
% data $\{(X_{1},Y_{1}),\ldots,(X_{n},Y_{n})\}$, with predictor $X_{i}\in\mathbb{R}^{d}$ and response $Y_{i}\in\mathbb{R}$, the postulated model is
% $Y_{i}=\beta_{\ast}^{T}X_{i}+e_{i}$ for some $\beta_{\ast}\in\mathbb{R}^{d}$
% and errors $\left\{  e_{1},...,e_{n}\right\}  $. The underlying loss function
% is $l(x,y;\beta)=\left(  y-\beta^{T}x\right)  ^{2}$ and the square-root LASSO
% estimator, is obtained by solving the problem,
% \[
% \min_{\beta\in\mathbb{R}^{d}}\bigg\{\sqrt{\E_{\Pr_{n}}\left[  l\left(
% X,Y;\beta\right)  \right]  }+\lambda\left\Vert \beta\right\Vert _{1}\bigg\},
% \]
% see
% \cite{belloni_square-root_2011,alquier_lasso_2008,oymak_squared-error_2013}
% for more on square-root LASSO. As $P_{n}$ denotes the empirical
% distribution corresponding to training samples,
% $\E_{P_{n}}\left[ l\left( X,Y;\beta \right) \right] $ is just the mean
% square training loss. In addition to the square-root LASSO estimator
% above with $\ell_{1}$ penalty, we derive a DRO representation of the
% form (\ref{RWPI-DRO}) for $\ell_{p}$-penalized estimators obtained by
% solving,
% \begin{align}
% \min_{\beta\in\mathbb{R}^{d}}\bigg\{\sqrt{\E_{P_{n}}\left[  l\left(
% X,Y;\beta\right)  \right]  }+\lambda\left\Vert \beta\right\Vert _{p}\bigg\},\label{Lp-Pen-LR}\end{align}
% for any $p\in\lbrack1,\infty).$}
% \end{example}

\begin{example}
[Regularized Logistic Regression]\label{Ex_RLR} \textnormal{ Consider the
  context of binary classification in which case the training data is
  of the form $\{(X_{1},Y_{1}), \ldots,(X_{n},Y_{n})\}$, with
  $X_{i}\in\mathbb{R}^{d},$ response $Y_{i}\in\{-1,1\}$ and the model
  postulates that, 
\[
\log\left(  \frac{\Pr\left(  Y_{i}=1|X_{i}=x\right)  }{1-\Pr\left(  Y_{i}=1|X_{i}=x\right)  }\right)  =\beta_{\ast}^{T}x
\]
for some $\beta_{\ast}\in\mathbb{R}^{d}$. In this case, the log-exponential
loss function (or negative log-likelihood for binomial distribution) is
\[
l\left(  x,y;\beta\right)  =\log\left( 1+\exp(-y\cdot\beta^{T}x) \right) ,
\]
and one is interested in estimating $\beta_{\ast}$ by solving
\begin{equation}
  \min_{\beta\in\mathbb{R}^{d}} \bigg\{\E_{\Pr_{n}}\left[  l\left(  X,Y;\beta
    \right)  \right]  +\lambda\left\Vert \beta\right\Vert _{p}\bigg\},\label{Lp-Pen-LogR}\end{equation}
for $p\in\lbrack1,\infty).$ Refer \cite{hastie_elements_2005} for a
more detailed discussion on regularized logistic regression.}
\end{example}

% \textnormal{The rest of this section shows that square-root LASSO and
%   Regularized Logistic Regression estimators, in addition to the
%   well-known Support Vector Machines (see
%   \cite{hastie_elements_2005}), are distributionally robust (in the
%   sense, they admit a representation of the form (\ref{RWPI-DRO})). }
% %ADD SUPPORT VECTOR MACHINES?

\subsection{Dual form of the DRO formulation (\ref{RWPI-DRO})}
\label{Sec-Dual-Form-DRO} Though the DRO formulation (\ref{RWPI-DRO})
involves optimizing over uncountably many probability measures, recent
strong duality results for Wasserstein DRO (see, for example, Theorem
1 in \cite{blanchet_quantifying_2016}) ensures that the inner supremum
in (\ref{RWPI-DRO}) % over the neighborhood
% $\{\Pr:\mathcal{D}_{c}(\Pr,\Pr_{n})\leq\delta\}$
admits a reformulation which is a simple, univariate optimization
problem. Before stating the result, we recall that the definition of
discrepancy measure $\mathcal{D}_{c}$ (see (\ref{Discrepancy_Def}))
requires the specification of cost function
$c( (x,y),(x^{\prime},y^{\prime})) $ between any two
predictor-response pairs
$(x,y),(x^{\prime},y^{\prime})\in\mathbb{R}^{d+1}.$

\begin{proposition}
  \label{Prop-Duality} Let
  $c: \mathbb{R}^{d+1} \times \mathbb{R}^{d+1} \rightarrow [0,\infty]$
  be a lower semi-continuous cost function satisfying
  $c((x,y),(x^\prime,y^\prime)) = 0$ whenever
  $(x,y) = (x^\prime,y^\prime).$ For $\gamma\geq0$ and loss functions
  $l(x,y;\beta)$ that are upper semi-continuous in $(x,y)$ for each
  $\beta$, define
\begin{equation}
\label{Adjust-Infty}\phi_{\gamma}(X_{i},Y_{i};\beta):=\sup_{u\in\mathbb{R}%
^{d},\ v\in\mathbb{R} }\bigg\{l(u,v;\beta)-\gamma c\big((u,v), (X_{i}%
,Y_{i})\big)\bigg\}.
\end{equation}
Then
\[
\sup_{\Pr:\ \mathcal{D}_{c}(\Pr,\Pr_{n})\leq\delta}\E_{\Pr}\big[l(X,Y;\beta
)\big] =\min_{\gamma\geq0}\left\{  \gamma\delta+\frac{1}{n}\sum_{i=1}^{n}%
\phi_{\gamma}(X_{i},Y_{i};\beta)\right\}  .
\]
\end{proposition}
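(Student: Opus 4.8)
The plan is to establish the duality via a Lagrangian/strong-duality argument for the inner supremum, exactly in the spirit of the problem-of-moments duality already used in Proposition~\ref{Prop-RWP-Duality}, but now with a single scalar moment constraint (the budget $\mathcal{D}_c(P,P_n)\le\delta$) instead of the estimating-equation constraints. First I would rewrite the inner supremum over measures $P$ as an optimization over joint couplings $\pi$ on $\mathbb{R}^{d+1}\times\mathbb{R}^{d+1}$, using the definition of $\mathcal{D}_c$ in \eqref{Discrepancy_Def}. Since the $W$-marginal of $\pi$ must equal $P_n$, the coupling decomposes as a mixture $\pi = \frac{1}{n}\sum_{i=1}^n \delta_{(X_i,Y_i)}\otimes \pi_i$, where each $\pi_i$ is a probability measure on $\mathbb{R}^{d+1}$ describing where the mass sitting at $(X_i,Y_i)$ is transported. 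In these variables the problem becomes
\begin{equation*}
\sup_{\pi_1,\dots,\pi_n}\left\{\frac{1}{n}\sum_{i=1}^n \int l(u,v;\beta)\,\pi_i(du,dv): \ \frac{1}{n}\sum_{i=1}^n\int c\big((u,v),(X_i,Y_i)\big)\,\pi_i(du,dv)\le\delta\right\},
\end{equation*}
which is a linear program in $(\pi_1,\dots,\pi_n)$ with one scalar inequality constraint coupling the budget.

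Next I would dualize the single budget constraint by introducing a multiplier $\gamma\ge 0$. The Lagrangian is
\begin{equation*}
\gamma\delta + \frac{1}{n}\sum_{i=1}^n\sup_{\pi_i}\int\Big(l(u,v;\beta)-\gamma\, c\big((u,v),(X_i,Y_i)\big)\Big)\,\pi_i(du,dv),
\end{equation*}
and because each inner supremum over a probability measure $\pi_i$ of a linear functional is attained at a point mass, it collapses to the pointwise supremum $\sup_{(u,v)}\{l(u,v;\beta)-\gamma c((u,v),(X_i,Y_i))\}=\phi_\gamma(X_i,Y_i;\beta)$. This yields the dual objective $\gamma\delta+\frac{1}{n}\sum_i\phi_\gamma(X_i,Y_i;\beta)$, and weak duality immediately gives the $\le$ direction: the primal value is bounded above by $\min_{\gamma\ge 0}$ of this expression.

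The main obstacle is strong duality, i.e.\ showing there is no duality gap so that the inequality is in fact an equality. I would handle this either by invoking the general problem-of-moments strong-duality theory cited for Proposition~\ref{Prop-RWP-Duality} (e.g.\ \cite{Isii1962}), whose hypotheses are met here since $c$ is lower semi-continuous with a nonempty finite-cost set and $l$ is upper semi-continuous (so that $l-\gamma c$ is upper semi-continuous and its pointwise supremum is well-defined and measurable), or by an explicit Sion-type minimax argument. The delicate points to verify are: (i) measurability and the extended-real-valued nature of $\phi_\gamma$ (it may equal $+\infty$ for small $\gamma$, which is consistent with the $\min$ over $\gamma\ge 0$ selecting a finite value), and (ii) that the budget constraint admits a strictly feasible point, i.e.\ some coupling with cost strictly below $\delta$—here $\pi_i=\delta_{(X_i,Y_i)}$ gives cost zero since $c((X_i,Y_i),(X_i,Y_i))=0$, which secures a Slater-type condition and closes the gap. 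With strict feasibility in hand, standard Lagrangian duality for a convex program with a single inequality constraint gives the matching $\ge$ direction and hence the claimed identity.
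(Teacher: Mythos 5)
Your proposal is correct, and it reaches the identity by a route that differs in execution from the paper's. The paper never disintegrates the coupling: it keeps all the constraints ($n$ marginal constraints plus the budget) in the moment problem, converts the inequality $\mathcal{D}_c(P,P_n)\leq\delta$ into an equality by adjoining a slack variable $S\geq 0$, and then invokes Isii's strong-duality theorem (its Proposition on the problem of moments) wholesale, checking the interior-point condition for the vector $\tilde q=(1,1/n,\dots,1/n,1,\delta)$ and afterwards eliminating the $n+2$ auxiliary multipliers to leave only $\gamma$. You instead dispatch the marginal constraints structurally, writing $\pi=\frac1n\sum_i\delta_{(X_i,Y_i)}\otimes\pi_i$ (trivially valid since $P_n$ is finitely supported), so that only the scalar budget constraint remains to be dualized; the collapse of $\sup_{\pi_i}\int(l-\gamma c)\,d\pi_i$ to the pointwise supremum $\phi_\gamma$ then appears explicitly rather than being read off from the semi-infinite dual constraints. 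What your route buys is economy: strong duality for a concave program with a single scalar inequality constraint follows from elementary one-dimensional separation applied to the concave value function $\delta\mapsto v(\delta)$, with your Slater point $\pi_i=\delta_{(X_i,Y_i)}$ (cost zero, using $c(w,w)=0$) guaranteeing both zero gap and dual attainment, so the full Isii machinery is not needed; what the paper's route buys is uniformity, since the identical moment-problem theorem also proves its Proposition \ref{Prop-RWP-Duality}. Three small points to tighten: the Slater argument, like the paper's, requires $\delta>0$, so the statement should be read with that restriction; the inner supremum over $\pi_i$ is in general only \emph{approached} by point masses, not attained, though the value identity $\sup_{\pi_i}\int f\,d\pi_i=\sup_{u,v}f(u,v)$ is all you use; and since the statement asserts a $\min$ over $\gamma\geq 0$ rather than an $\inf$, you should say explicitly that the separating slope produced by the Slater condition is the attaining multiplier.
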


\noindent Consequently, the DR regression problem (\ref{RWPI-DRO}) reduces to
\begin{equation}
\inf_{\beta\in\mathbb{R}^{d}}\sup_{\Pr:\ \mathcal{D}_{c}(\Pr,\Pr_{n})\leq\delta
}\E_{\Pr}\big[l(X,Y;\beta)\big]=\inf_{\beta\in\mathbb{R}^{d}}\ \min_{\gamma\geq
0}\left\{  \gamma\delta+\frac{1}{n}\sum_{i=1}^{n}\phi_{\gamma}(X_{i}%
,Y_{i};\beta)\right\}  . \label{DR-Reg-Equiv}%
\end{equation}
Proposition \ref{Prop-Duality} follows as a straightforward
application of \cite[Theorem 1]{blanchet_quantifying_2016}. As we
shall see in Section \ref{Sec-DRR}, the function
$\phi_{\gamma}(\cdot)$ is explicitly computable for various examples
of interest. Of the reformulations in the literature for Wasserstein
distance based DRO (see \cite{esfahani_data-driven_2015,
  blanchet_quantifying_2016,gao_distributionally_2016}%
), %the assumptions made for the general duality result in
the general cost structure assumed in \cite[Theorem
1]{blanchet_quantifying_2016} is essential for the exact recovery of
machine learning estimators that are presented below in Section
\ref{Sec-ML-DRR}.

% For a
% proof of Proposition \ref{Prop-Duality}, see Appendix
% \ref{AppSec-Str-Duality} in the supplementary material
% \cite{supp_RWPI}.
%and algorithms to solve
%\ref{DR-Reg-Equiv}, see \cite%
%{esfahani_data-driven_2015,blanchet_quantifying_2016,blanchet_sample_2016,gao_distributionally_2016}%

\subsection{Distributionally Robust Representations\label{Sec-ML-DRR}}
\label{Sec-DRR}

\subsubsection{Example \ref{Ex_GLasso} (continued): Recovering regularized
estimators for linear regression. \label{Sec-Equiv-Regressions}}

We examine the right-hand side of (\ref{DR-Reg-Equiv}) for the square loss
function for the linear regression model $Y=\beta^{T}X+e$, and obtain the
following result without any further distributional assumptions on $X,Y$ and
the error $e.$ For brevity, let $\bar{\beta}=(-\beta,1),$ and recall the
definition of the discrepancy measure $\mathcal{D}_{c}$ in
(\ref{Discrepancy_Def}).

\begin{proposition}
[DR linear regression with square loss]\label{Thm-lp-Equiv}Fix $q\in
(1,\infty].$ Consider the square loss
function and second order discrepancy measure
$\mathcal{D}_{c}$ defined using
$\ell_{q}$-norm. In other words, take
$l(x,y;\beta)=(y-\beta^{T}x)^{2}$ {\ and }
$c\big((x,y),(u,v)\big)=\Vert (x,y)-(u,v)\Vert_{q}^{2}$. Then,
\begin{equation}
\inf_{\beta\in\mathbb{R}^{d}}\sup_{\Pr:\ \mathcal{D}_{c}(\Pr,\Pr_{n})\leq\delta
}\E_{\Pr} \big[l(X,Y;\beta)\big]=\inf_{\beta\in\mathbb{R}^{d}}\left(  \sqrt{
MSE_{n}(\beta)}+\sqrt{\delta}\ \Vert\bar{\beta}\Vert_{p}\right)  ^{2},
\label{lp-bad-Equiv}%
\end{equation}
where $MSE_{n}(\beta)=\E_{\Pr_{n}}[(Y-\beta^{T}X)^{2}]=\frac{1}{n} \sum_{i=1}%
^{n}(Y_{i}-\beta^{T}X_{i})^{2}$ is the mean square error for the coefficient
choice $\beta$ and $p$ is such that $1/p+1/q=1.$
\end{proposition}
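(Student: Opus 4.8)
The plan is to invoke the dual reformulation of the inner supremum provided by Proposition~\ref{Prop-Duality} and then carry out the resulting finite-dimensional optimizations explicitly. The hypotheses of that proposition hold in the present setting: the cost $c\big((x,y),(u,v)\big)=\|(x,y)-(u,v)\|_q^2$ is nonnegative, continuous (hence lower semi-continuous) and finite everywhere, so $\{c<\infty\}$ is all of $\mathbb{R}^{d+1}\times\mathbb{R}^{d+1}$ (Borel and nonempty), while $l(x,y;\beta)=(y-\beta^Tx)^2$ is continuous, hence upper semi-continuous, in $(x,y)$. Thus for each fixed $\beta$,
\[
\sup_{P:\ \mathcal{D}_{c}(P,P_{n})\le \delta} E_P\big[l(X,Y;\beta)\big]=\min_{\gamma\ge 0}\left\{\gamma\delta+\frac{1}{n}\sum_{i=1}^n\phi_\gamma(X_i,Y_i;\beta)\right\},
\]
and the task reduces to evaluating $\phi_\gamma$ and then performing the scalar minimization over $\gamma$. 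Since the right-hand identity in \eqref{lp-bad-Equiv} carries $\min_\beta$ on both sides, it suffices to establish the equality of these two expressions for each fixed $\beta$ and then take the minimum over $\beta\in\mathbb{R}^d$.

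The key computation is $\phi_\gamma$. Writing $\Delta=(u,v)-(X_i,Y_i)$ and recalling $\bar\beta=(-\beta,1)$, I would use the identity $v-\beta^Tu=\bar\beta^T(u,v)=(Y_i-\beta^TX_i)+\bar\beta^T\Delta$, so that with $r_i:=Y_i-\beta^TX_i$,
\[
\phi_\gamma(X_i,Y_i;\beta)=\sup_{\Delta\in\mathbb{R}^{d+1}}\left\{(r_i+\bar\beta^T\Delta)^2-\gamma\|\Delta\|_q^2\right\}.
\]
The crux is to collapse this $(d+1)$-dimensional problem to a scalar one. For a fixed value $t=\bar\beta^T\Delta$ the objective depends on $\Delta$ only through the penalty $\|\Delta\|_q^2$, so it suffices to minimize $\|\Delta\|_q$ subject to $\bar\beta^T\Delta=t$; by H\"older's inequality and its tightness this minimum equals $|t|/\|\bar\beta\|_p$. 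Hence $\phi_\gamma=\sup_{t\in\mathbb{R}}\{(r_i+t)^2-\gamma t^2/\|\bar\beta\|_p^2\}$, a concave quadratic in $t$ precisely when $\gamma>\|\bar\beta\|_p^2$ (and $+\infty$ otherwise). Maximizing it yields
\[
\phi_\gamma(X_i,Y_i;\beta)=\frac{\gamma}{\gamma-\|\bar\beta\|_p^2}\,(Y_i-\beta^TX_i)^2,\qquad \gamma>\|\bar\beta\|_p^2.
\]

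Averaging over $i$ turns the empirical sum into $\frac{\gamma}{\gamma-\|\bar\beta\|_p^2}MSE_n(\beta)$, so the remaining step is the scalar minimization of $\gamma\delta+\frac{\gamma}{\gamma-\|\bar\beta\|_p^2}MSE_n(\beta)$ over $\gamma>\|\bar\beta\|_p^2$. Substituting $\gamma=\|\bar\beta\|_p^2+\eta$ with $\eta>0$ separates the expression into $\|\bar\beta\|_p^2\delta+MSE_n(\beta)+\eta\delta+\|\bar\beta\|_p^2 MSE_n(\beta)/\eta$; the two $\eta$-dependent terms are minimized by AM--GM at $\eta=\|\bar\beta\|_p\sqrt{MSE_n(\beta)/\delta}$, and the resulting value collapses to the perfect square $\big(\sqrt{MSE_n(\beta)}+\sqrt{\delta}\,\|\bar\beta\|_p\big)^2$. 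Taking the minimum over $\beta\in\mathbb{R}^d$ then gives the claimed identity.

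The main obstacle is the reduction of the multivariate supremum defining $\phi_\gamma$ to the one-dimensional problem via the dual-norm identity, together with correctly tracking the finiteness threshold $\gamma>\|\bar\beta\|_p^2$ so that the otherwise-$+\infty$ values of $\phi_\gamma$ are excluded in the minimization. One should also record the degenerate case $MSE_n(\beta)=0$, where the infimum over $\gamma$ is approached as $\gamma\downarrow\|\bar\beta\|_p^2$ but the value still matches the perfect square. Finally, the admissible range $q\in(1,\infty]$ (equivalently $p\in[1,\infty)$) enters only through the dual-norm identity and covers exactly the $\ell_p$ penalties of interest.
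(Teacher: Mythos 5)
Your proposal is correct and follows essentially the same route as the paper's proof: invoke Proposition \ref{Prop-Duality}, reduce $\phi_\gamma$ to a scalar problem via H\"older's inequality and its tightness (obtaining $\phi_\gamma=\frac{\gamma}{\gamma-\Vert\bar\beta\Vert_p^2}(Y_i-\beta^TX_i)^2$ for $\gamma>\Vert\bar\beta\Vert_p^2$ and $+\infty$ otherwise), and then minimize over $\gamma$ to obtain the perfect square. Your slicing by $t=\bar\beta^T\Delta$ and the AM--GM substitution $\gamma=\Vert\bar\beta\Vert_p^2+\eta$ are only cosmetic variants of the paper's direct substitution and first-order-optimality argument, and your extra care with the degenerate case $MSE_n(\beta)=0$ is a valid refinement.
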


As an important special case, we consider $q=\infty$ and identify the
following equivalence for DR regression applying discrepancy measure based on
neighborhoods defined using $\ell_{\infty}$ norm:
\[
\text{arg}\,\text{min}_{\beta\in\mathbb{R}^{d}}\sup_{\Pr:\ \mathcal{D}%
_{c}(\Pr,\Pr_{n})\leq\delta}\E_{\Pr} \big[l(X,Y;\beta)\big]=\text{arg}\,\text{min}_{\beta\in\mathbb{R}^{d}}\left\{  \sqrt{ MSE_{n}(\beta)}+\sqrt{\delta
}\ \Vert\bar{\beta}\Vert_{1}\right\}  .
\]
% Here the right hand side is same as the square-root LASSO estimator
% with $\lambda=\sqrt{\delta}$ in Example \ref{Ex_GLasso}.
%, so we, in particular, recover
%the first part of Example \ref{Ex_GLasso}.

The right hand side of (\ref{lp-bad-Equiv}) resembles $\ell_{p}$-norm
regularized regression (except for the fact that we have
$\Vert\bar{\beta }\Vert_{p}$ instead of $\Vert\beta\Vert_{p}$). In
order to obtain the exact equivalence, we introduce a slight
modification to the norm $\Vert \cdot\Vert_{q}$ to be used as the cost
function, $c(\cdot)$, in defining $\mathcal{D}_{c}$. We define
\begin{align}
N_{q}\big((x,y),(u,v)\big)=
\begin{cases}
\Vert x-u\Vert_{q},\quad\quad & \text{ if }y=v,\\
\infty, & \text{ otherwise,}%
\end{cases} 
\label{Modified-Cost-Fn}%
\end{align}
in order to use $c(\cdot) = N_{q}(\cdot)$ as the transportation cost
instead of the standard $\ell_{q} $ norm
$\Vert(x,y)-(u,v)\Vert_{q}$. Subsequently, one can consider modified
cost functions of form
$c((x,y),(u,v))=(N_{q}((x,y),(u,v)))^{\rho}$. As this modified cost
function assigns infinite cost when $y\neq v$, the infimum in
(\ref{PROF_R}) is effectively over joint distributions that do not
alter the marginal distribution of $Y$. As a consequence, the
resulting neighborhood set $\{\Pr:\mathcal{D}_{c} (\Pr,\Pr_{n})\leq\delta\}$
admits distributional ambiguities only with respect to the predictor
variables $X$.

The following result is essentially the same as Proposition
\ref{Thm-lp-Equiv} except for the use of the modified cost $N_{q}$ and
the resulting norm regularization of form $\Vert\beta\Vert_{p}$
(instead of $\Vert\bar{\beta }\Vert_{p}$ as in Proposition
\ref{Thm-lp-Equiv}), thus exactly recovering the regularized
regression estimators in \eqref{Lp-Pen-LR}.

\begin{theorem}
  \label{Cor-lp-good-equiv}Consider the square loss,
  $l(x,y;\beta) = (y-\beta^Tx)^2,$ and discrepancy measure
  $\mathcal{D}_{c}(\Pr,\Pr_{n})$ defined as in (\ref{Discrepancy_Def})
  using the cost function $c((x,y),(u,v))=(N_{q}((x,y),(u,v)))^{\rho}$
  with $\rho = 2.$ Then,
\[
  \inf_{\beta\in\mathbb{R}^{d}}\sup_{\Pr:\
    \mathcal{D}_{c}(\Pr,\Pr_{n})\leq\delta
  }\E_{\Pr}\big[l(X,Y;\beta)\big]=\inf_{\beta\in\mathbb{R}^{d}}\left(
    \sqrt{MSE_{n}(\beta)}+\sqrt{\delta}\ \Vert\beta\Vert_{p}\right)
  ^{2},
\]
where $MSE_{n}(\beta)=\E_{\Pr_{n}}[(Y-\beta^{T}X)^{2}]=n^{-1}\sum_{i=1}^{n}%
(Y_{i}-\beta^{T}X_{i})^{2}$ is the mean square error for the coefficient
choice $\beta$ and $p$ is such that $1/p+1/q=1.$
\end{theorem}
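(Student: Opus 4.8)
The plan is to invoke the strong duality of Proposition~\ref{Prop-Duality} with the modified cost $c=N_q^2$, and then evaluate the resulting inner supremum in closed form. This mirrors the proof of Proposition~\ref{Thm-lp-Equiv}, the only structural change being that the infinite penalty in $N_q$ freezes the $Y$-coordinate of the perturbation, which ultimately replaces $\|\bar\beta\|_p$ by $\|\beta\|_p$.

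First I would verify the hypotheses of Proposition~\ref{Prop-Duality} for $c=N_q^2$: the cost is nonnegative, the set $\{y=v\}$ on which it is finite is closed (hence Borel and nonempty), and on that set it agrees with the continuous map $\|x-u\|_q^2$, so $N_q^2$ is lower semi-continuous; the square loss is continuous, hence upper semi-continuous. Proposition~\ref{Prop-Duality} then gives
\begin{equation*}
\sup_{P:\ \mathcal{D}_c(P,P_n)\le\delta}E_P\big[l(X,Y;\beta)\big]=\min_{\gamma\ge0}\left\{\gamma\delta+\frac1n\sum_{i=1}^n\phi_\gamma(X_i,Y_i;\beta)\right\}.
\end{equation*}
Because $N_q((x,y),(u,v))=\infty$ whenever $y\ne v$, the supremum defining $\phi_\gamma$ collapses to a maximization over $u$ alone, with $v=Y_i$ held fixed. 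Writing $\Delta=u-X_i$ and $r_i=Y_i-\beta^TX_i$ for the $i$-th residual, this reads $\phi_\gamma(X_i,Y_i;\beta)=\sup_{\Delta\in\mathbb{R}^d}\{(r_i-\beta^T\Delta)^2-\gamma\|\Delta\|_q^2\}$.

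The key computational step is to evaluate this supremum. Using the dual-norm identity $\sup_{\|\Delta\|_q\le t}|\beta^T\Delta|=t\|\beta\|_p$ (valid with $1/p+1/q=1$), I first maximize over the direction of $\Delta$ at fixed radius $t=\|\Delta\|_q$, obtaining $\sup_{\|\Delta\|_q=t}(r_i-\beta^T\Delta)^2=(|r_i|+t\|\beta\|_p)^2$. The remaining scalar maximization of $(|r_i|+t\|\beta\|_p)^2-\gamma t^2$ over $t\ge0$ is a downward-opening parabola exactly when $\gamma>\|\beta\|_p^2$ and is $+\infty$ otherwise, yielding
\begin{equation*}
\phi_\gamma(X_i,Y_i;\beta)=\frac{\gamma\, r_i^2}{\gamma-\|\beta\|_p^2}\ \text{ for }\gamma>\|\beta\|_p^2,\qquad \phi_\gamma=+\infty\ \text{ for }\gamma\le\|\beta\|_p^2.
\end{equation*}
Substituting back, the outer minimization is effectively over $\gamma>\|\beta\|_p^2$, so that $\sup_P E_P[l]=\min_{\gamma>\|\beta\|_p^2}\{\gamma\delta+\gamma\,MSE_n(\beta)/(\gamma-\|\beta\|_p^2)\}$. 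The substitution $s=\gamma-\|\beta\|_p^2>0$ separates this into $\|\beta\|_p^2\delta+MSE_n(\beta)+\min_{s>0}\{s\delta+\|\beta\|_p^2\,MSE_n(\beta)/s\}$, and AM--GM evaluates the last minimum as $2\|\beta\|_p\sqrt{\delta\,MSE_n(\beta)}$, collapsing the whole expression to the perfect square $\big(\sqrt{MSE_n(\beta)}+\sqrt{\delta}\,\|\beta\|_p\big)^2$. Taking the minimum over $\beta\in\mathbb{R}^d$ on both sides finishes the proof.

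The main obstacle is the careful evaluation of $\phi_\gamma$: one must correctly track the threshold $\gamma=\|\beta\|_p^2$ separating finite from infinite values (so that restricting the outer minimization is justified), and handle the degenerate cases $\beta=0$, $MSE_n(\beta)=0$, or $\delta=0$, where the optimizing $\gamma$ or $s$ must be interpreted as a boundary limit. In each such case a direct check confirms that both sides equal $MSE_n(\beta)$, respectively $\delta\|\beta\|_p^2$, so the closed-form square remains valid throughout.
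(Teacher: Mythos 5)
Your proposal is correct and follows essentially the same route as the paper: invoke Proposition \ref{Prop-Duality}, use the infinite penalty in $N_q$ to freeze the $Y$-coordinate, reduce $\phi_\gamma$ to the same expression as in the proof of Proposition \ref{Thm-lp-Equiv} (with $\|\beta\|_p$ replacing $\|\bar\beta\|_p$), and minimize over $\gamma>\|\beta\|_p^2$ to obtain the perfect square. Your substitution $s=\gamma-\|\beta\|_p^2$ with AM--GM is just a cosmetic variant of the paper's first-order optimality argument for the scalar minimization, and your explicit treatment of the degenerate cases is a harmless (indeed welcome) addition.
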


\subsubsection{Example \ref{Ex_RLR} (continued): Recovering regularized
estimators for classification.}

Apart from exactly recovering norm regularized estimators for linear
regression,
%that use a familiar norm of $\beta$ as penalty,
the discrepancy measure $\mathcal{D}_{c}$ based on the modified norm
$N_{q}$ in (\ref{Modified-Cost-Fn}) is natural when our interest is in
learning problems where the responses $Y_{i}$ take values in a finite
set -- as in the binary classification problem where the response
variable $Y$ takes values in $\{-1,+1\}$. The following result allows
us to recover the DRO formulation behind the regularized logistic
regression estimators discussed in Example \ref{Ex_RLR} and as well
for the widely used support vector machines (see
\cite{hastie_elements_2005}).

\begin{theorem}
[Regularized regression for Classification]%
\label{Cor-Classification-Equivalences} Consider the discrepancy
measure $\mathcal{D}_{c}(\cdot)$ defined using the cost function
$c((x,y),(u,v))=N_{q} ((x,y),(u,v))^\rho$ with $\rho = 1.$ Then for
logistic regression with log-exponential loss function and Support
Vector Machine (SVM) with the hinge loss function, we have
\[
\inf_{\beta\in\mathbb{R}^{d}}\sup_{\Pr:\ \mathcal{D}_{c}(\Pr,\Pr_{n})\leq\delta
}\E_{\Pr} \big[\log(1+e^{-Y\beta^{T}X})\big]=\inf_{\beta\in\mathbb{R}^{d}}%
\frac{1}{ n}\sum_{i=1}^{n}\log\left(  1+e^{-Y_{i}\beta^{T}X_{i}}\right)
+\delta\left\Vert \beta\right\Vert _{p},
\]
and
\[
\inf_{\beta\in\mathbb{R}^{d}} \sup_{\Pr : \ D_{c}(\Pr, \Pr_{n}) \leq\delta} \E_{\Pr}
\big[ (1-Y\beta^{T}X)^{+}\big] = \frac{1}{n}\sum_{i=1}^{n}(1-Y_{i}\beta
^{T}X_{i})^{+}+ \ \delta\left\Vert \beta\right\Vert _{p},
\]
where $p$ is such that $1/p+1/q=1$.
\end{theorem}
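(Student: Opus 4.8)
The plan is to apply the strong-duality reformulation of Proposition \ref{Prop-Duality} to each loss separately and then evaluate the resulting one-dimensional dual in closed form. Both loss functions share the structure $l(x,y;\beta)=\psi(y\beta^{T}x)$, where $\psi(t)=\log(1+e^{-t})$ in the logistic case and $\psi(t)=(1-t)^{+}$ in the hinge case; in each case $\psi$ is convex, non-increasing, and $1$-Lipschitz. Since both losses are continuous (hence upper semicontinuous) in $(x,y)$ and the cost $c=N_{q}$ from (\ref{Modified-Cost-Fn}) is nonnegative and lower semicontinuous with nonempty finite-cost set (the diagonal $\{y=v\}$), Proposition \ref{Prop-Duality} applies and gives
\[
\sup_{P:\ \mathcal{D}_{c}(P,P_{n})\leq\delta}E_{P}[l(X,Y;\beta)]=\min_{\gamma\geq0}\left\{\gamma\delta+\frac{1}{n}\sum_{i=1}^{n}\phi_{\gamma}(X_{i},Y_{i};\beta)\right\},
\]
with $\phi_{\gamma}$ as in (\ref{Adjust-Infty}). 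The whole problem thus reduces to computing $\phi_{\gamma}(X_{i},Y_{i};\beta)$ for this cost, and it is worth noting that here we use the first power of $N_{q}$ (unlike the squared cost in Theorem \ref{Cor-lp-good-equiv}), which is what will produce the exact linear penalty $\delta\|\beta\|_{p}$.

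Second, I would evaluate $\phi_{\gamma}$. Because $N_{q}((u,v),(X_{i},Y_{i}))=+\infty$ unless $v=Y_{i}$, the supremum in (\ref{Adjust-Infty}) is effectively over $u\in\mathbb{R}^{d}$ with $v=Y_{i}$ fixed; writing $u=X_{i}+\Delta$ and $z_{i}=Y_{i}\beta^{T}X_{i}$, this becomes
\[
\phi_{\gamma}(X_{i},Y_{i};\beta)=\sup_{\Delta\in\mathbb{R}^{d}}\left\{\psi\big(z_{i}+Y_{i}\beta^{T}\Delta\big)-\gamma\|\Delta\|_{q}\right\}.
\]
Since $\psi$ is non-increasing, for fixed $\|\Delta\|_{q}=r$ the objective is maximized by driving $Y_{i}\beta^{T}\Delta$ to its minimum over the sphere, namely $-\|\beta\|_{p}r$; this extremal value is attained by duality of the $\ell_{p}$ and $\ell_{q}$ norms (the supremum defining $\|\beta\|_{p}$ over the $\ell_{q}$-unit ball is achieved in finite dimensions). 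Hence the problem collapses to the scalar optimization
\[
\phi_{\gamma}(X_{i},Y_{i};\beta)=\sup_{r\geq0}\big\{\psi(z_{i}-\|\beta\|_{p}r)-\gamma r\big\}.
\]

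Third, I would analyze this scalar problem using the convex, non-increasing, $1$-Lipschitz structure of $\psi$. The map $r\mapsto\psi(z_{i}-\|\beta\|_{p}r)$ is convex and non-decreasing, with slope bounded by $\|\beta\|_{p}$ and asymptotic slope exactly $\|\beta\|_{p}$ (the right-derivative of $-\psi$ tends to its Lipschitz constant $1$). Consequently, if $\gamma<\|\beta\|_{p}$ the objective is eventually increasing and the supremum is $+\infty$, whereas if $\gamma\geq\|\beta\|_{p}$ the objective is non-increasing in $r$, so the supremum is attained at $r=0$ and equals $\psi(z_{i})=l(X_{i},Y_{i};\beta)$. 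For the hinge loss one checks the piecewise-linear function directly; for the logistic loss one uses $\psi'(t)=-1/(1+e^{t})$. Thus $\phi_{\gamma}(X_{i},Y_{i};\beta)=l(X_{i},Y_{i};\beta)$ when $\gamma\geq\|\beta\|_{p}$ and $+\infty$ otherwise, with the same finiteness threshold $\|\beta\|_{p}$ for every $i$.

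Finally, substituting this into the dual, the minimization over $\gamma\geq0$ is attained at the smallest feasible value $\gamma=\|\beta\|_{p}$ (the map $\gamma\mapsto\gamma\delta$ is non-decreasing while $\frac{1}{n}\sum_{i}\phi_{\gamma}$ is constant on $\gamma\geq\|\beta\|_{p}$), giving
\[
\sup_{P:\ \mathcal{D}_{c}(P,P_{n})\leq\delta}E_{P}[l(X,Y;\beta)]=\frac{1}{n}\sum_{i=1}^{n}l(X_{i},Y_{i};\beta)+\delta\|\beta\|_{p}.
\]
Taking $\min_{\beta}$ on both sides yields the two claimed identities. The main obstacle is the scalar optimization in the third step: one must pin down the exact finiteness threshold $\gamma=\|\beta\|_{p}$ and confirm that the supremum is attained at $r=0$ (rather than in the interior) precisely at the threshold. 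This is where the sharp Lipschitz constant of $\psi$ and the tight Hölder/dual-norm equality enter, and where the logistic and hinge cases require a brief separate verification.
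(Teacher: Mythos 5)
Your proof is correct, and it reaches the result by a somewhat different route than the paper for the key step. Both arguments share the same skeleton: invoke the duality of Proposition \ref{Prop-Duality}, observe that the cost $N_q$ forces $v=Y_i$ so the inner supremum is over the predictor alone, show $\phi_\gamma(X_i,Y_i;\beta)$ equals the empirical loss when $\gamma\geq\|\beta\|_p$ and $+\infty$ otherwise, and then minimize $\gamma\delta+\frac{1}{n}\sum_i\phi_\gamma$ at the boundary $\gamma=\|\beta\|_p$. Where you diverge is in how the inner supremum is evaluated: the paper handles the two losses separately, citing Lemma~1 of the Shafieezadeh-Abadeh et al.\ paper for the logistic case, and for the hinge loss using the representation $x^{+}=\sup_{0\leq\alpha\leq1}\alpha x$, a minimax interchange, and H\"{o}lder's inequality. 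You instead give a unified, self-contained treatment: write both losses as $\psi(y\beta^{T}x)$ with $\psi$ convex, non-increasing, $1$-Lipschitz, reduce to the scalar problem $\sup_{r\geq0}\{\psi(z_i-\|\beta\|_{p}r)-\gamma r\}$ via attainment of the H\"{o}lder equality on the $\ell_q$-sphere, and read the finiteness threshold off the fact that the asymptotic slope of $-\psi$ coincides with its Lipschitz constant $1$ (a point you correctly flag as essential: if $\lim_{t\to-\infty}(-\psi'(t))$ were strictly smaller than $\sup_t(-\psi'(t))$, the threshold for finiteness and the threshold for attainment at $r=0$ would not match). Your version buys generality --- the same argument gives the penalty $L\|\beta\|_p$ for any margin loss whose Lipschitz constant $L$ is attained asymptotically --- and avoids both the external citation and the minimax step, which your scalar analysis shows is unnecessary for the hinge loss. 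One incidental benefit: your norm indices (cost in $\ell_q$, penalty $\|\beta\|_p$, $1/p+1/q=1$) are consistent with the theorem statement, whereas the paper's own proof contains $p$/$q$ typos (it writes $\lambda\|x-X_i\|_p$ and concludes with $\delta\|\beta\|_q$). The only cases worth a sentence in a final write-up are the degenerate $\beta=\mathbf{0}$ (trivial, since the loss is then constant) and a remark that $\delta>0$ makes the minimizing $\gamma$ the left endpoint of the finiteness region.
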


\noindent The proofs of all of the results in this subsection are
provided in Appendix A.1 in the supplementary
material \cite{supp_RWPI}. The example of logistic regression with
Wasserstein distance based uncertainty sets has been considered in
\cite{shafieezadeh-abadeh_distributionally_2015}. The representation
for regularized logistic regression in Theorem
\ref{Cor-Classification-Equivalences} can be seen as an extension in
which the approximate representation described in \cite[Remark
1]{shafieezadeh-abadeh_distributionally_2015} is made to coincide
exactly with the regularized logistic regression estimator that has
been widely used in practice.  The approximate representation for
regularized logistic regression in
\cite{shafieezadeh-abadeh_distributionally_2015} is based on the
semi-infinite linear programming duality results due to
\cite{shapiro_duality_2001}. On the other hand, due to the presence of
infinite transportation costs in our DRO formulation that results in
the desired exact representation (see Theorem 2), we utilize a
different strong duality result, \cite[Theorem
1]{blanchet_quantifying_2016}, that is specifically derived for
Wasserstein DRO with general cost structures. In addition, other
equivalences described for square-root LASSO and Support Vector
Machines in terms of Wasserstein DRO, as far as we know, have been
reported for the first time in this paper. See
\cite{abadehregularization} for additional examples.

% \textnormal{While these particular examples
%   allow us to view commonly employed machine learning algorithms as
%   specific instances of DRO, in its entirety, as a framework for
%   generating distributionally robust inference procedures for
%   different models and loss functions, without having to prove
%   equivalences with an existing or popular algorithm.}

\section{The Robust Wasserstein Profile Function\label{Sec_WPF}}
Given an estimating equation $\E_{\Pr_{n}}[h(W,\theta)]=\mathbf{0},$ the
objective of this section is to study the asymptotic behavior of the
associated RWP function $R_{n}(\theta)$. As discussed in the
Introduction, this analysis is key in our approach towards
constructing the confidence region $\Lambda_n(\theta)$ and choosing
the radius of the uncertainty set optimally.

\subsection{The RWP Function for Estimating Equations and Its Use in
  Constructing Confidence Regions\label{Sec-RWP-Inference-Tool}}

The Robust Wasserstein Profile function's definition is inspired by the notion
of the Profile Likelihood function, introduced in the pioneering work of Art
Owen in the context of EL (see \cite{owen_empirical_2001}). We provide the
definition of the RWP function for estimating $\theta_{\ast}\in\mathbb{R}^{l}%
$, which we assume satisfies
\begin{equation}
\E\left[  h\left(  W,\theta_{\ast}\right)  \right]  =\mathbf{0},
\label{EST_EQ_A}%
\end{equation}
for a given random variable $W$ taking values in $\mathbb{R}^{m}$ and
an integrable function
$h:\mathbb{R}^{m}\times\mathbb{R}^{l}\rightarrow \mathbb{R}^{r}$. The
parameter $\theta_{\ast}$ is required to be unique to ensure
consistency, but uniqueness is not necessary for the limit theorems
that we shall state, unless we explicitly indicate so.

Given a set of samples $\{W_{1},...,W_{n}\}$, which are assumed to be
i.i.d.  copies of $W$, we define the Wasserstein Profile function for
the estimating equation (\ref{EST_EQ_A}) as,
\begin{equation}
  R_{n}(\theta)  :=\inf\big\{D_{c}(\Pr,\Pr_{n}):\ \E_{\Pr}\left[
    h(W,\theta)\right]  =\mathbf{0}\big\}.\label{RWP-Defn}%
\end{equation}
Here, recall that $\Pr_{n}$ denotes the empirical distribution associated with
the training samples $\{W_{1},\ldots,W_{n}\}$ and $c(\cdot)$ is a chosen cost
function. In this section, we are primarily concerned with cost functions of
the form,
\begin{equation}
  c( u,w)  =\left\Vert w-u\right\Vert _{q}^{\rho},\label{c_quad}%
\end{equation}
where $\rho \in [1,\infty)$ and $q \in (1,\infty].$ We remark,
however, that the methods presented here can be easily adapted to more
general cost functions. For simplicity, we assume that the samples
$\{W_{1},\ldots,W_{n}\}$ are distinct.

Since, as we shall see, the asymptotic behavior of the RWP function
$R_{n}(\theta)$ is dependent on the exponent $\rho$ in \eqref{c_quad},
we sometimes write $R_{n}\left( \theta;\rho\right) $ to make this
dependence explicit; but whenever the context is clear, we drop $\rho$
to avoid notational burden. Also, observe that the profile function
defined in (\ref{PROF_R}) for the linear regression example is
obtained as a particular case by selecting $W=\left( X,Y\right) $,
$\beta=\theta$ and defining $h( x,y,\theta) =(y-\theta^{T}x)x$.

Our goal in this section is to develop an asymptotic analysis of the RWP
function which parallels that of the theory of EL. In particular, we shall
establish,
\begin{equation}
n^{\rho/2}R_{n}\left(  \theta_{\ast};\rho\right)  \Rightarrow\bar{R}\left(
\rho\right), \label{R_2}%
\end{equation}
for a suitably defined random variable $\bar{R}( \rho).$ Throughout
this paper, the symbol ``$\Rightarrow$'' is used to denote convergence
in distribution.

As the empirical distribution weakly converges to the underlying
probability distribution from which the samples are obtained, it
follows from the definition of RWP function in \eqref{R_2} that
$R_{n}(\theta;\rho )\rightarrow0,$ as $n\rightarrow\infty,$ if and
only if $\theta$ satisfies $\E[h(W,\theta)]=\mathbf{0}$; for every
other $\theta,$ we have that
$n^{\rho/2}R_{n}(\theta;\rho)\rightarrow\infty.$ Therefore, the result
in (\ref{R_2}) can be used to provide confidence regions around
$\theta_{\ast}$ as follows: Given a confidence level $1-\alpha$ in
(0,1), if we denote $\eta_{\alpha}$ as the $(1-\alpha)-$quantile of
$\bar {R}(\rho) $, that is,
$\Pr( \bar{R}( \rho) \leq\eta_{\alpha}) =1-\alpha,$ then the set,
%$\mathcal{R}_{n}\left( \eta_{\alpha}/n\right) =\{\theta :R_{n}\left(
%\theta \right) \leq \eta_{\alpha}/n\} \subset \mathbb{R}^l$ .
%$\mathcal{R}_{n}\left( \eta_{\alpha}/n\right) =\{\theta :R_{n}\left(
%\theta \right) \leq \eta_{\alpha}/n\}$%
\[
  \bar{\Lambda}_{n}\left( n^{-\rho/2}\eta_{\alpha}\right) =\left\{
    \theta:R_{n}(\theta;\rho) \leq n^{-\rho/2}\eta_{\alpha}\right\}
\]
is an approximate $(1-\alpha)-$confidence region for
$\theta_{\ast}$. This is because, by definition of
$\bar{\Lambda}_{n}\left( \cdot \right) $,
\begin{align}
  \Pr\left( \theta_{\ast}\in\bar{\Lambda}_{n}\left(
      n^{-\rho/2}\eta_{\alpha}\right) \right) =\Pr\left(
    n^{\rho/2}R_{n}\left( \theta_{\ast};\rho\right) \leq
    \eta_{\alpha}\right) \approx \Pr\left( \bar{R}\left( \rho\right)
    \leq \eta_{\alpha}\right) =1-\alpha.
\label{Conf-Region}
\end{align}

Throughout the development in this section, the dimension $m$ of the
random vector $W$ is kept fixed and the sample size $n$ is sent to
infinity; the function $h\left( \cdot\right) $ can be quite
general. % In Section \ref{Sec-Nonasymp-Gen-Lasso}, we extend the
% analysis of RWP function to the case where the ambient dimension could
% scale with the number of training samples $n,$ in the specific context
% of square-root LASSO for linear regression.

\subsection{The dual formulation of RWP function\label{Sec-Dual-RWP}}

The first step in the analysis of the RWP function $R_{n}(\theta)$ is to use
the definition of the discrepancy measure $D_{c}$ to rewrite $R_{n}(\theta)$
as,
\begin{align*}
  R_{n}(\theta)  &  = \inf\big\{\E_{\pi}\left[  c(U,W)\right]  : \pi
                   \in\mathcal{P}\left(  \mathbb{R}^{m} \times\mathbb{R}^{m} \right)  ,\text{\ }
                   \E_{\pi}\left[  h(U,\theta)  \right]  = \mathbf{0}, \text{ }%
                   \pi_{_{W}}=\Pr_{n} \big\},
\end{align*}
which is a \textit{problem of moments} of the form,
\begin{align}
  R_{n}(\theta)  = \vspace{-25pt} \inf_{\pi\in\mathcal{P}(\mathbb{R}^{m} \times
  \mathbb{R}^{m})} \left\{  \E_{\pi} \left[  c(U,W)\right]: \ \E_{\pi}\left[
  h\left(  U,\theta\right)  \right]  = \mathbf{0}, \ \E_{\pi}\left[  \mathbb{I}(W
  = W_{i})\right]  = \frac{1}{n},\  i  \leq n\right\}. 
\label{RWP-LP}
\end{align}

The problem of moments is a classical linear programming problem for
which the respective dual formulation and strong duality have been
well-studied (see, for example,
\cite{Isii1962,doi:10.1287/opre.43.5.807}). The linear program problem
over the variable $\pi$ in \eqref{RWP-LP} admits a simple dual
semi-infinite linear program of form,
\begin{align*}
&  \sup_{a_{i} \in\mathbb{R}, \lambda\in\mathbb{R}^{r}} \left\{  a_{0} +
\frac{1}{n}\sum_{i=1}^{n} a_{i} : \ a_{0} + \sum_{i=1}^{n} a_{i}
\mathbf{1}_{\{w = W_{i}\}}(u,w) + \lambda^{T}h(u,\theta) \leq c(u,w),\ 
  \forall u,w \in\mathbb{R}^{m}\right\} \\ 
&  \quad\quad\quad\quad=\sup_{\lambda\in\mathbb{R}^{r}} \left\{  \frac{1}{n}
\sum_{i=1}^{n} \inf_{u \in\mathbb{R}^{m}} \left\{  c(u,W_{i}) - \lambda^{T}
h(u,\theta) \right\}  \right\} \\
&  \quad\quad\quad\quad=\sup_{\lambda\in\mathbb{R}^{r}} \left\{  - \frac{1}{n}
\sum_{i=1}^{n} \sup_{u \in\mathbb{R}^{m}} \left\{  \lambda^{T} h(u,\theta) -
c(u,W_{i}) \right\}  \right\}  .
\end{align*}
Proposition \ref{Prop-RWP-Duality} below states that strong duality holds
under mild assumptions, and the dual formulation above indeed equals
$R_{n}(\theta).$

\begin{proposition}
\label{Prop-RWP-Duality} Let $h(\cdot,\theta)$ be Borel measurable, and
$\Omega= \{ (u,w) \in\mathbb{R}^{m} \times\mathbb{R}^{m}: c(u,w) < \infty\}$
be Borel measurable and non-empty. Further, suppose that $\mathbf{0}$ lies in
the interior of the convex hull of $\{ h(u, \theta): u \in\mathbb{R}^{m}\}. $
Then,
\begin{align*}
R_{n}(\theta) = \sup_{\lambda\in\mathbb{R}^{r}} \left\{  - \frac{1}{n}
\sum_{i=1}^{n} \sup_{u \in\mathbb{R}^{m}} \left\{  \lambda^{T} h(u,\theta) -
c(u,W_{i}) \right\}  \right\}  .
\end{align*}

\end{proposition}

A proof of Proposition \ref{Prop-RWP-Duality}, along with an
introduction to the problem of moments, is provided in Appendix B in
the supplementary material \cite{supp_RWPI}.

\subsection{Asymptotic Distribution of the RWP Function\label{Sec_Quad_WPF}}

In order to gain intuition behind (\ref{R_2}), let us first consider the
simple example of estimating the expectation $\theta_{\ast}= \E[W]$ of a
real-valued random variable $W$, using $h\left(  w,\theta\right)  =w-\theta.$

\begin{example}
  \textnormal{Let $h\left( w,\theta\right) =w-\theta$ with $m=1=l=r.$
    First, suppose that the choice of cost function is
    $c( u,w) =\left| u-w\right| ^{\rho}$ for some $\rho>1$. As long as
    $\theta$ lies in the interior of convex hull of support of $W,$
    Proposition \eqref{Prop-RWP-Duality} implies,
\begin{align*}
R_{n}(\theta;\rho)  & = \sup_{\lambda\in\mathbb{R}} \left\{  - \frac{1}{n}
\sum_{i=1}^{n} \sup_{u \in\mathbb{R}} \big\{ \lambda(u-\theta)- \left|  W_{i}
- u\right| ^{\rho} \big\}\right\} \\
& = \sup_{\lambda\in\mathbb{R}}\left\{  -\frac{\lambda}{n} \sum_{i=1}^{n}(W_{i}-\theta)-\frac{1}{n}\sum_{i=1}^{n}\sup_{u \in\mathbb{R}}
\big\{\lambda\left(  u - W_{i}\right)  -\left|  W_{i}-u\right|  ^{\rho}
\big\}\right\} .
\end{align*}
As $\max_{\Delta}\{\lambda\Delta- |\Delta|^{\rho}\} = (\rho-1)|\lambda
/\rho|^{\rho/(\rho-1)},$ we obtain
\begin{align}
R_{n}\left(  \theta;\rho\right)   & =\sup_{\lambda}\left\{  -\frac{\lambda}{n}\sum_{i=1}^{n}(W_{i}-\theta)- (\rho-1)\left|  \frac{\lambda}{\rho}\right|
^{\frac{\rho}{\rho-1}} \right\} \nonumber\\
& = \left\vert \frac{1}{n}\sum_{i=1}^{n}\left(  W_{i}-\theta\right)
\right\vert ^{\rho}.\nonumber
\end{align}
Then, under the hypothesis that $\E\left[  W \right]  = \theta_{\ast},$ and
assuming $\text{Var}[W] = \sigma_{_{W}}^{2} <\infty$, we obtain,
\[
n^{\rho/2}R_{n}\left(  \theta_{\ast};\rho\right)  \Rightarrow\bar{R} \left(
\rho\right)  \sim\sigma_{_{W}}^{\rho}\left\vert N\left(  0,1\right)
\right\vert ^{\rho},
\]
where $N\left(  0,1\right)  $ denotes a standard Gaussian random variable. The
limiting distribution for the case $\rho= 1$ can be formally obtained by
setting $\rho= 1$ in the above expression for $\bar{R}(\rho)$, but the
analysis is slightly different. When $\rho= 1,$
\begin{align*}
R_{n}\left(  \theta\right)   & =\sup_{\lambda\in\mathbb{R}}\left\{
-\frac{\lambda}{n} \sum_{i=1}^{n}(W_{i}-\theta)-\frac{1}{n}\sum_{i=1}^{n}\sup_{u \in\mathbb{R}} \big\{\lambda\left(  u - W_{i}\right)  -\left\vert u -
W_{i}\right\vert \big\}\right\} \\
& =\sup_{\lambda}\left\{  -\frac{\lambda}{n}\sum_{i=1}^{n}(W_{i}-\theta
)-\sup_{\Delta\in\mathbb{R}}\big\{\lambda\Delta-\left\vert \Delta\right\vert
\big\}\right\} .
\end{align*}
Following the notion that $\infty\times0 = 0,$
\begin{align*}
R_{n}(\theta)  & =\sup_{\lambda}\left\{  \frac{\lambda}{n}\sum_{i=1}^{n}(W_{i}-\theta)-\infty I\left(  \left\vert \lambda\right\vert >1\right)
\right\} \\
& =\max_{\left\vert \lambda\right\vert \leq1}\frac{\lambda}{n}\sum_{i=1}^{n}(W_{i}-\theta)=\left\vert \frac{1}{n}\sum_{i=1}^{n}(W_{i}-\theta
)\right\vert .
\end{align*}
So, indeed if $\E[W] =\theta_{\ast}$ and $Var\left[  W\right]  = \sigma_{_{W}}^{2} <\infty$, we obtain
\[
n^{1/2}R_{n}\left(  \theta_{\ast}\right)  \Rightarrow\sigma_{_{W}}\left\vert
N\left(  0,1\right)  \right\vert .
\]
} \label{Eg-Mean}
\end{example}
We now discuss far reaching extensions to the developments in Example
\ref{Eg-Mean} by considering estimating equations that are more general.
First, we state a general asymptotic stochastic upper bound, which we believe
is the most important result from an applied standpoint as it captures the
speed of convergence of $R_{n}(\theta_{\ast})$ to zero. Following this, we
obtain an asymptotic stochastic lower bound that matches with the upper bound
(and therefore the weak limit) under mild, additional regularity conditions.
We discuss the nature of these additional regularity conditions, and also why
the lower bound in the case $\rho=1$ can be obtained basically without
additional regularity.

%Interestingly, the asymptotic upper bound turns out to also be
%an asymptotic lower bound (and therefore the weak limit) under no
%additional regularity conditions.

For the asymptotic upper bound we shall impose the following assumptions.

\textbf{Assumptions:}

\textbf{A1)} Assume that $c( u,w) =\Vert u-w\Vert_{q}^{\rho}$ for
$q\geq1$ and $\rho\geq1$. For a chosen $q \geq1,$ let $p$ be such that
$1/p + 1/q = 1.$

%\bigskip

\textbf{A2) }Suppose that $\theta_{\ast}\in\mathbb{R}^{l}$ satisfies
$\mathbb{E}\left[  h(W,\theta_{\ast})\right]  =\mathbf{0}$ and $\mathbb{E}
\left\Vert h(W,\theta_{\ast})\right\Vert _{2}^{2} <\infty$. (While we do not
assume that $\theta_{\ast}$ is unique, the results are stated for a fixed
$\theta_{\ast}$ satisfying $\E[h(W,\theta_{\ast})] = \mathbf{0}$.)

%\bigskip

\textbf{A3) }Suppose that the function $h(\cdot,\theta_{\ast})$ is
continuously differentiable with derivative $D_{w}h(\cdot,\theta_{\ast})$.

%\bigskip

\textbf{A4)} Suppose that for each $\zeta\neq0$,
\begin{equation}
\Pr\left(  \left\Vert \zeta^{T}{D}_{w}h\left(  W,\theta_{\ast}\right)
\right\Vert _{p}>0\right)  >0. \label{U_T1}%
\end{equation}
% \bigskip

Assumptions A1) - A3) make precise the setting considered. Assumption
A4) is the only assumption which is technical in nature and it can be
equivalently stated as
\[\E[D_wh(W,\theta_\ast)D_wh(W,\theta_\ast)^T] \succ 0, \] where
$A \succ 0$ is used to denote that the matrix $A$ is positive
definite. Verification of this positive definitenes condition for the
linear and logistic regression problems is executed, respectively, in
Sections \ref{Sec-RWPF-Lin-Reg} and \ref{Sec-RWPF-Log-Reg}. In order
to state the theorem, let us introduce the notation for asymptotic
stochastic upper bound,
\[
n^{\rho/2}R_{n}(\theta_{\ast};\rho)\lesssim_{D}\bar{R}\left(  \rho\right)  ,
\]
which expresses that, for every continuous and bounded non-decreasing
function $f\left( \cdot\right),$ we have,
\[
\overline{\lim}_{n\rightarrow\infty}\E\left[  f\left(  n^{\rho/2}R_{n}%
(\theta_{\ast};\rho)\right)  \right]  \leq \E \left[  f\left(  \bar{R}\left(
\rho\right)  \right)  \right]  .
\]
Similarly, we write $\gtrsim_{D}$ for an asymptotic stochastic lower bound,
namely
\[
\underline{\lim}_{n\rightarrow\infty}\E \left[  f\left(  n^{\rho/2}R_{n}%
(\theta_{\ast};\rho)\right)  \right]  \geq \E \left[  f\left(  \bar{R}\left(
\rho\right)  \right)  \right]  .
\]
Therefore, if both stochastic upper and lower bounds hold, then $n^{\rho
/2}R_{n}(\theta_{\ast};\rho)\Rightarrow\bar{R}\left(  \rho\right)  $ as $n
\rightarrow\infty.$ (see, for example, \cite{billingsley2013convergence}). Now
we are ready to state our asymptotic upper bound.

\begin{theorem}
\label{Thm-WPF_RWPI} Under Assumptions A1) to A4) we have, as $n\rightarrow
\infty$,
\[
n^{\rho/2}R_{n}(\theta_{\ast};\rho)\lesssim_{D}\bar{R}\left(  \rho\right)  ,
\]
where, for $\rho>1$,
\[
\bar{R}\left(  \rho\right)  :=\max_{\zeta\in\mathbb{R}^{r}}\left\{  \rho
\zeta^{T}H - (\rho-1) \E\left\Vert \zeta^{T}D_{w}h\left(  W, \theta_{\ast
}\right)  \right\Vert _{p}^{\rho/(\rho-1)} \right\}  ,
\]
and if $\rho=1$,
\[
\bar{R}\left(  1\right)  :=\max_{\zeta:\Pr\left(  \left\Vert \zeta^{T}{D}
_{w}h\left(  W,\theta_{\ast}\right)  \right\Vert _{p}>1\right)  =0}
\{\zeta^{T}H\}.
\]
In both cases $H\sim\mathcal{N}(\mathbf{0}, \text{Cov}[h(W,\theta_{\ast})])$,
and $\text{Cov}[h(W,\theta_{\ast})]=\E\left[  h(W,\theta_{\ast})h(W,\theta
_{\ast})^{T}\right]  $.
\end{theorem}

We remark that as $\rho\rightarrow1$, one can verify that
$\bar{R}( \rho) \Rightarrow\bar{R}\left( 1\right) $, so formally one
can simply keep in mind the expression $\bar{R}\left( \rho\right) $
with $\rho>1$. It is interesting to note that $\bar{R}(\rho)$
resembles Fenchel transform when viewed as a function of $H$. Indeed,
in the case where $p= q = \rho =2$ and $\E[D_wh(W,\theta_\ast)]$ is
invertible, the expression for $\bar{R}(\rho)$ simplifies as follows:
\begin{align}
  \bar{R}(\rho) = \max_{\zeta \in \mathbb{R}^r} \left\{ 2\zeta^TH -
  \zeta^T \E\left[ D_wh(W, \theta_\ast) \right] \zeta \right\} =
  H^T \left(\E\left[D_wh(W,\theta_\ast)\right]\right) ^{-1} H. 
\label{simplified-rho-p-eq-2}
\end{align}

We now study some sufficient conditions which
guarantee that $\bar{R }\left( \rho\right) $ is also an asymptotic
lower bound for $n^{\rho/2}R_{n}(\theta_{\ast};\rho)$. We consider the
case $\rho=1$ first, which will be used in applications to logistic
regression discussed later in the paper.

\begin{proposition}
\label{Prop_SLB_rho_1}In addition to assuming A1) to A4), suppose that $W$ has
a positive density (almost everywhere) with respect to the Lebesgue measure.
Then,
\[
  n^{1/2}R_{n}(\theta_{\ast};1) \Rightarrow \bar{R}(1) .
\]

\end{proposition}

The following set of assumptions can be used to obtain tight
asymptotic stochastic lower bounds when $\rho>1;$ the corresponding
result will be applied to the context of square-root LASSO.

%\bigskip

\textbf{A5) }(\textit{Growth condition}) Assume that there exists $\kappa
\in\left(  0,\infty\right)  $ such that for $\left\Vert w\right\Vert _{q}%
\geq1$,%
\begin{equation}
\left\Vert D_{w}h(w,\theta_{\ast})\right\Vert _{p}\leq\kappa\left\Vert
w\right\Vert _{q}^{\rho-1}, \label{U_T0}%
\end{equation}
and that $\E\left\Vert W_{i}\right\Vert ^{\rho}<\infty$.

%\bigskip

\textbf{A6)} (\textit{Locally Lipschitz continuity}) Assume that there exists
exists $\bar{\kappa}: \mathbb{R}^{m}\rightarrow[0,\infty)$ such that,
\[
\left\Vert D_{w}h(w+\Delta,\theta_{\ast})-D_{w}h(w,\theta_{\ast})\right\Vert
_{p}\leq\bar{\kappa}\left(  W_{i}\right)  \left\Vert \Delta\right\Vert _{q},
\]
for $\left\Vert \Delta\right\Vert _{q}\leq1$, and
$ \E \left[ \bar{\kappa}\left( w\right) ^{c} \right] <\infty, $ for
$c \leq\max\{2, \frac{\rho}{\rho-1}\}.$ \bigskip

We now summarize our last weak convergence result of this section.

\begin{proposition}
\label{Prop_SLB_rho_L_1}If Assumptions A1) to A6) hold and $\rho>1$,
then
\[
n^{\rho/2}R_{n}(\theta_{\ast};\rho)\Rightarrow\bar{R}\left(  \rho\right)  .
\]

\end{proposition}

\bigskip

Before we move on with the applications of the previous results, it is worth
discussing the nature of the additional assumptions introduced to ensure that
an asymptotic lower bound can be obtained which matches the upper bound in
Theorem \ref{Thm-WPF_RWPI}.

As we shall see in the technical development in Appendix A.3 (see
supplementary material \cite{supp_RWPI}) where the proofs of the above
results are furnished, the dual formulation of RWP function in
Proposition \ref{Prop-RWP-Duality} can be re-expressed, assuming only
A1) to A4), as,
\begin{equation}
n^{\rho/2}R_{n}\left(  \theta_{\ast};\rho\right)  =\sup_{\zeta}\left\{
\zeta^{T}H_{n}-\frac{1}{n}\sum_{k=1}^{n}\sup_{\Delta}\left\{  \int_{0}%
^{1}\zeta^{T}Dh\left(  W_{i}+ \Delta u /n^{1/2},\theta_{\ast}\right)  \Delta
du-\left\Vert \Delta\right\Vert _{q}^{\rho} \right\}  \right\}  .
\label{RDisc}%
\end{equation}

In order to make sure that the lower bound asymptotically matches the upper
bound obtained in Theorem \ref{Thm-WPF_RWPI} we need to make sure that we rule
out cases in which the inner supremum is infinite in (\ref{RDisc}) with
positive probability in the prelimit.

In Proposition \ref{Prop_SLB_rho_1} we assume that $W$ has a positive density
with respect to the Lebesgue measure because in that case the condition
\[
\Pr\left(  \left\Vert \zeta^{T}Dh\left(  W,\theta_{\ast}\right)  \right\Vert
_{p}\leq1\right)  =1,
\]
(which appears in the upper bound obtained in Theorem \ref{Thm-WPF_RWPI})
implies that $\left\Vert \zeta^{T}Dh\left(  w,\theta_{\ast}\right)
\right\Vert _{p}\leq1$ almost everywhere with respect to the Lebesgue measure.
Due to the appearance of the integral in the inner supremum in (\ref{RDisc}),
an upper bound can be obtained for the inner supremum, which translates into a
tight lower bound for $n^{\rho/2}R_{n}\left(  \theta_{\ast}\right)  $.

Moving to the case $\rho>1$ studied in Proposition \ref{Prop_SLB_rho_L_1},
condition (\ref{U_T0}) in A5) guarantees that\ (for fixed $W_{i}$ and $n$)
\[
\left\Vert Dh\left(  W_{i}+\Delta u/n^{1/2},\theta_{\ast}\right)
\Delta\right\Vert =O\left(  \left\Vert \Delta\right\Vert _{q}^{\rho
}/n^{\left(  \rho-1\right)  /2}\right)  ,
\]
as $\left\Vert \Delta\right\Vert _{q}\rightarrow\infty$. Therefore,
the cost term
$\left( -\left\Vert \Delta\right\Vert _{q}^{\rho}\right) $ in
(\ref{RDisc}) will ensure a finite optimum in the prelimit for large
$n$. The condition that $\E \left\Vert W\right\Vert _{q}^{\rho}<\infty$
is natural because we are using a optimal transport cost
$c( u,w) =\left\Vert u-w\right\Vert _{q}^{\rho}$. If this condition is
not satisfied, then the underlying nominal distribution is at infinite
transport distance from the empirical distribution.

The local Lipschitz assumption A6) is just imposed to simplify the analysis
and can be relaxed; we have opted to keep A6) because we consider it mild in
view of the applications that we will study in the sequel.

\section{Using RWPI for optimal regularization\label{Sec-ML-RWP}}
% Our goal in this section is to use RWP function for optimal
% regularization in Examples \ref{Ex_GLasso} and \ref{Ex_RLR}. As
% explained in the Introduction, the key step is to
In this section, we aim to utilize the limit theorems for the RWP
function derived in Section \ref{Sec_Quad_WPF} to select the radius of
uncertainty, $\delta,$ in the DRO formulation \eqref{RWPI-DRO}.
% optimality criterion
% for the selection of $\delta$ in the DRO formulation
% \eqref{RWPI-DRO}.
Then owing to the DRO representations derived in Section
\ref{Sec-ML-DRR}, this would imply an automatic choice of
regularization parameter $\lambda=\sqrt{\delta}$ in the square-root
LASSO example (following Theorem \ref{Cor-lp-good-equiv}), or
$\lambda=\delta$ in the regularized logistic regression (following
Theorem \ref{Cor-Classification-Equivalences}). In the development
below, we follow the logic described in the Introduction for the
square-root LASSO setting.

\subsection{Selection of $\delta$ and coverage properties}
\label{sec-sel-delta-cov-prop}
Throughout this section, let $\beta_{\ast}$ denote the underlying
linear or logistic regression model parameter from which the training
samples $\{(X_{i}%
,Y_{i}):i=1,\ldots,n\}$ are obtained.  Lemma
\ref{Lem-inf-sup-exchange-apps} below establishes that the infimum and
the supremum in the DRO formulation \eqref{RWPI-DRO} can be
exchanged. See Appendix C for a proof of Lemma
\ref{Lem-inf-sup-exchange-apps}.
\begin{lemma}
In the settings of Theorems \ref{Cor-lp-good-equiv} and
\ref{Cor-Classification-Equivalences}, if $\E\Vert X \Vert_{2}^{2} < \infty,$
we have that
\begin{equation}
\inf_{\beta\in\mathbb{R}^{d}}\sup_{\Pr \in\ \mathcal{U}_{\delta}(\Pr_{n})}%
\E_{\Pr}\left[  l\big( X,Y;\beta\big)\right]  =\sup_{\Pr \in\ \mathcal{U}_{\delta
}(\Pr_{n})}\inf_{\beta\in\mathbb{R }^{d}}\E_{\Pr}\left[  l\big(X,Y;\beta
\big)\right]  . \label{M_M}%
\end{equation}
% As a consquence, any $\beta$ that solves
%   $\inf_\beta \sup_{\Pr\, \in \,\mathcal{U}_\delta(\Pr_n)}
%   E_\Pr[\ell(X,Y;\beta)]$ lies in $\Lambda_n(\delta).$
\label{Lem-inf-sup-exchange-apps}
\end{lemma}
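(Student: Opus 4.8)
The plan is to prove the easy inequality $\inf_\beta\sup_P \le \sup_P\inf_\beta$ for free (weak duality) and to obtain the reverse inequality from Sion's minimax theorem, after repairing the one property that genuinely fails. First I would record the structural facts that make a minimax principle plausible. For each loss in Theorems \ref{Cor-lp-good-equiv} and \ref{Cor-Classification-Equivalences} (square, log-exponential, hinge), $\beta\mapsto l(x,y;\beta)$ is convex, so $\beta\mapsto E_P[l(X,Y;\beta)]$ is convex; and $P\mapsto E_P[l(X,Y;\beta)]$ is affine, hence concave. Moreover $P\mapsto \mathcal{D}_c(P,P_n)$ is convex (an infimum of a linear functional over couplings subject to affine marginal constraints), so $\mathcal{U}_\delta(P_n)$ is convex, and it is weakly closed because $\mathcal{D}_c(\cdot,P_n)$ is weakly lower semicontinuous. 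The standing hypothesis $E\Vert X\Vert_2^2<\infty$ (together with $\delta>0$) guarantees that all four optimization values are finite real numbers, as required for a real-valued minimax statement.

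The main obstacle, and the reason a direct application of Sion fails, is upper semicontinuity of $P\mapsto E_P[l]$ in the maximization variable. Because $l$ grows at exactly the order $\rho$ of the transport cost, mass escaping to infinity along a weakly convergent sequence in $\mathcal{U}_\delta(P_n)$ can keep $E_{P_k}[l]$ bounded above $E_P[l]$, so only lower semicontinuity survives on the weak topology. To circumvent this I would truncate the support. Fix $R$ large enough that $\mathrm{supp}(P_n)\subseteq B_R:=\{w:\Vert w\Vert_q\le R\}$, and set $\mathcal{U}_\delta^R(P_n):=\{P\in\mathcal{U}_\delta(P_n):\mathrm{supp}(P)\subseteq B_R\}$. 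This set is convex, and weakly compact by Prokhorov's theorem (tightness is immediate from the uniform support bound) together with weak closedness. On $B_R$ the loss $l(\cdot;\beta)$ is bounded and continuous, so $P\mapsto E_P[l(X,Y;\beta)]$ is weakly continuous, in particular upper semicontinuous, on $\mathcal{U}_\delta^R(P_n)$; and $\beta\mapsto E_P[l(X,Y;\beta)]$ remains convex and continuous on $\mathbb{R}^d$ by dominated convergence.

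With these properties in hand, Sion's minimax theorem applies to the truncated problem, with the \emph{maximization} domain $\mathcal{U}_\delta^R(P_n)$ compact convex and the minimization domain $\mathbb{R}^d$ convex, giving
\[
\inf_{\beta\in\mathbb{R}^d}\ \sup_{P\in\mathcal{U}_\delta^R(P_n)} E_P\big[l(X,Y;\beta)\big] = \sup_{P\in\mathcal{U}_\delta^R(P_n)}\ \inf_{\beta\in\mathbb{R}^d} E_P\big[l(X,Y;\beta)\big]
\]
for every admissible $R$. The concluding step is to send $R\to\infty$ and show each side converges to its untruncated counterpart. For fixed $\beta$, $\sup_{P\in\mathcal{U}_\delta^R}E_P[l]$ increases to $\sup_{P\in\mathcal{U}_\delta}E_P[l]=:g(\beta)$, since any $P\in\mathcal{U}_\delta(P_n)$ is approximable by compactly supported feasible measures with nearly the same objective (here the moment hypothesis controls the truncated tails). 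The explicit formulas in Theorems \ref{Cor-lp-good-equiv} and \ref{Cor-Classification-Equivalences} exhibit a regularizer of the form $\delta\Vert\beta\Vert_p$ or $\sqrt{\delta}\,\Vert\beta\Vert_p$, so $g$ is coercive and its minimizers lie in a fixed compact set $K$ independent of $R$. Monotone convergence of continuous functions on the compact $K$ is uniform by Dini's theorem, which yields $\inf_\beta\sup_{P\in\mathcal{U}_\delta^R}\to\inf_\beta\sup_{P\in\mathcal{U}_\delta}$; a parallel monotone-approximation argument handles the right-hand side. Passing to the limit in the displayed identity then delivers (\ref{M_M}). I expect the two delicate points to be the uniformity of the $R\to\infty$ passage on the left-hand side (resolved by Dini together with coercivity) and the verification that truncation does not lose objective value, which is precisely where the second-moment assumption enters.
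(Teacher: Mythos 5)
Your overall architecture (truncate the support, apply Sion with compactness on the measure side via Prokhorov, then let $R\to\infty$) is a genuinely different route from the paper's, and it can be repaired, but as written the limiting step has a real gap. You localize the minimization over $\beta$ by arguing that $g$ is coercive, so ``its minimizers lie in a fixed compact set $K$ independent of $R$.'' What you actually need is that the minimizers of the \emph{truncated} functions $g_R(\beta):=\sup_{P\in\mathcal{U}_\delta^R(P_n)}E_P[l(X,Y;\beta)]$ lie in a fixed compact set. Since $g_R\le g$, coercivity of $g$ gives no lower bound on $g_R$ away from the origin, and pointwise monotone convergence of convex (even nonnegative) functions does not imply convergence of infima: one can have convex $f_R\uparrow f$ pointwise with minimizers of $f_R$ escaping to infinity, so $\inf f_R\not\to\inf f$ (e.g., by replacing $f$ beyond the point $R$ with its tangent line, or splicing in a dip near $\beta\sim 2^R$). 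Without a uniform lower bound, Dini on $K$ controls $\min_K g_R$ but not $\inf_{\mathbb{R}^d}g_R$. The fix is to establish coercivity of one fixed truncation $g_{R_0}$ directly: perturbing each sample $X_i$ by $\Delta_i$ with $\Vert\Delta_i\Vert_q=\sqrt{\delta}$ aligned with $\beta$ in the H\"older sense (feasible once $R_0$ exceeds $\max_i\Vert X_i\Vert_q+\sqrt{\delta}$) gives $g_{R_0}(\beta)\ge\delta\Vert\beta\Vert_p^2$ for the square loss and $g_{R_0}(\beta)\ge\delta\Vert\beta\Vert_p-C$ for the logistic/hinge losses, and then $g_R\ge g_{R_0}$ for all $R\ge R_0$ localizes every minimizer. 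Two further points: your ``parallel monotone-approximation argument'' for the right-hand side faces the same escaping-infimum problem (you would need $\inf_\beta E_{P^R}[l]\to\inf_\beta E_P[l]$, which fails for the same reason), but it is unnecessary — once the left-hand sides converge, the chain $\sup_{P\in\mathcal{U}_\delta^R}\inf_\beta\le\sup_{P\in\mathcal{U}_\delta}\inf_\beta\le\inf_\beta\sup_{P\in\mathcal{U}_\delta}$ together with the per-$R$ Sion identity sandwiches the right-hand side (note, incidentally, that you stated weak duality backwards: the free inequality is $\sup\inf\le\inf\sup$, not the reverse). Finally, the claim that every feasible $P$ is approximable by compactly supported feasible measures with nearly the same objective needs an actual construction: take a near-optimal coupling $\pi$ of $P_n$ and $P$ and reroute the mass transported outside $B_R$ back to its source point; this only decreases the transport cost, keeps the support in $B_R$, and changes the objective by a tail term that vanishes since $E_P[l]\le g(\beta)<\infty$.

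For comparison, the paper avoids all of this by putting compactness on the $\beta$-side instead: in Proposition \ref{Prop_min_max} it invokes Terkelsen's minimax theorem on the compact convex level set $\kappa_b=\{\beta:g(\beta)\le b\}$ — compact precisely by the coercivity you also read off the explicit representations in Theorems \ref{Cor-lp-good-equiv} and \ref{Cor-Classification-Equivalences} — and then shows by an elementary contradiction that restricting the infimum to $\kappa_b$ changes neither side. That route requires no truncation, no Prokhorov or Dini, and no approximation of feasible measures; your route buys a proof from the more standard Sion theorem at the cost of the delicate $R\to\infty$ interchange, which is exactly where your argument currently breaks.
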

Recall the definition of $\Lambda_n(\delta)$ in \eqref{Lambda-n-Defn}.
As a consequence of Lemma \ref{Lem-inf-sup-exchange-apps}, the set
$\Lambda_n(\delta)$ contains the optimal solution obtained by solving
the problem in the left hand side of \eqref{M_M}. Indeed, if this was
not the case, the left hand side in \eqref{M_M} would be strictly
smaller than the right hand side of \eqref{M_M}. Recall from Section
\ref{Subsec_Opt_Reg_LASSO_Int} that our primary criterion for choosing
$\delta$ is to choose $\delta$ large enough so that
$\beta_\ast \in \Lambda_n(\delta)$ with desired confidence.
The property that estimator obtained by solving the DRO formulation
\eqref{RWPI-DRO} lies in $\Lambda_n(\delta)$, we believe, makes our
selection of $\delta$ logically consistent with the ultimate goal of
the overall estimation procedure, namely, estimating $\beta_{\ast}$.

% Recall from Section
% \ref{Subsec_Opt_Reg_LASSO_Int} that our primary criterion for choosing
% $\delta$ is to choose $\delta$ large enough so that
% $\beta_\ast \in \Lambda_n(\delta)$ with desired confidence.
% The last statement of Lemma \ref{Lem-inf-sup-exchange-apps} asserts
% that the estimator for $\beta_{\ast}$ obtained by solving the left
% hand side in (\ref{M_M}) lies in $\Lambda_n(\delta).$ This property,
% we believe, makes our selection of $\delta$ logically consistent with
% the ultimate goal of the overall estimation procedure, namely,
% estimating $\beta_{\ast}$.

Due to the optimality of $\beta_\ast,$ the convexity of the loss
$\ell(x,y;\,\cdot\,)$ in Examples \ref{Ex_GLasso} - \ref{Ex_RLR} and
finiteness of $\E\Vert X \Vert_2^2,$ we have that
$\E[D_\beta l(X,Y;\beta_\ast)] = \mathbf{0}.$ Consider the RWP function
with estimating equation $D_\beta l(x,y;\beta) = \mathbf{0}$ given by,
\[
  R_{n}(\beta)=\inf\big\{\mathcal{D}_{c}(\Pr,\Pr_{n}): D_\beta
  \E_\Pr\left[l(X,Y;\beta) \right] = \mathbf{0} \big\}.\] Then, as
explained in Section \ref{Subsec_Intro_WPF_LASSO}, the events
$\{R_n(\beta_\ast) \leq \delta\}$ and
$\{ \beta_\ast \in \Lambda_n(\delta)\}$ coincide. If $\delta$ is
selected so that $\delta \geq R_n(\beta_\ast),$ then the worst-case
loss estimated by the DRO formulation \eqref{RWPI-DRO} can be shown to
form an upper bound to the empirical risk evaluated at $\beta_\ast,$
thus controlling the bias portion of the generalization error. This is
the content of Proposition \ref{Prop-Gen-Bias-Control} below.
\begin{proposition}
\label{Prop-Gen-Bias-Control}
In the settings of Theorems \ref{Cor-lp-good-equiv} and
\ref{Cor-Classification-Equivalences}, if
$\delta \geq R_n(\beta_\ast),$ %and $\E\Vert X \Vert_{2}^{2} < \infty,$
we have,
\begin{align*}
\left\vert  \E_{\Pr_n}\left[ l(X,Y;\beta_
\ast)\right] - \inf_\beta
  \sup_{\Pr\,\in\,\mathcal{U}_\delta(\Pr_n)} \E_\Pr\left[ l(X,Y;\beta)\right]
  \right\vert \leq C_1\delta + C_2(n)\mathbf{1}_{\{\rho = 2\}}\sqrt{\delta},
  % \quad\text{ as }  n \rightarrow 
  % \infty. 
\end{align*}
where $C_1 := (2\rho-1) \Vert \beta_\ast \Vert^\rho$ and
$C_2(n) := 2\Vert \beta_\ast \Vert_p \sqrt{\E_{\Pr_n}[l(X,Y;\beta_\ast)]}.$ 
\end{proposition}

Now, in order to guarantee that $\delta \geq R_n(\beta_\ast)$ (or
equivalently, $\beta^\ast \in \Lambda_n(\delta)$) with a desired
confidence $1-\alpha,$ it is sufficient to proceed as in Section
\ref{Sec-RWP-Inference-Tool}: Let $\eta_\alpha$ be the
$(1-\alpha)$-quantile of the weak limit, $\bar{R},$ resulting from
$n^{\rho/2}R_n(\beta_\ast) \Rightarrow \bar{R},$ derived in Section
\ref{Sec_Quad_WPF}. In light of Theorems \ref{Cor-lp-good-equiv} and
\ref{Cor-Classification-Equivalences}, we have $\rho = 2$ for Example
\ref{Ex_GLasso} and $\rho = 1$ for Example \ref{Ex_RLR}.  If we take
$\eta \geq \eta_\alpha,$
\begin{align}
  \label{delta-sel}
  \delta = n^{-\rho/2}\eta \quad\text{ and } \quad \Lambda_n(\delta) = \{\beta: R_n(\beta) \leq
  n^{-\rho/2}\eta\},
\end{align}
then
$\lim_{n \rightarrow \infty}\Pr(R_n(\beta_\ast) > n^{-\rho/2}\eta)
\leq \alpha.$ Then, as demonstrated in \eqref{Conf-Region}, we have
$\lim_{n \rightarrow \infty}\Pr(\beta_\ast \in \Lambda_n(\delta)) \geq
1-\alpha.$ In Sections \ref{Sec-RWPF-Lin-Reg} - \ref{Sec-RWPF-GLM}
below, we illustrate the application of this prescription by deriving
upper bounds for $\bar{R}$ that are not dependent on the knowledge of
$\beta_\ast.$
\begin{theorem}
\label{thm-Gen-err-control}
In the settings of Theorems \ref{Cor-lp-good-equiv} and
\ref{Cor-Classification-Equivalences}, suppose that the samples
$\{(X_i,Y_i):i \leq n\}$ are obtained from the distribution $\Pr_\ast$
and $\E_{\Pr_\ast}\Vert X \Vert_{2}^{2} < \infty.$ For any
$1-\alpha \in (1/2,1),$ if $\delta$ is chosen to be $n^{-\rho/2}\eta$
for some $\eta \geq \eta_\alpha,$ then % as $n \rightarrow \infty,$ with
% probability exceeding $1-2\alpha,$
we have that, 
%there exists a positive constant $C$ such that,
\begin{align*}
  \lim_{n \rightarrow \infty}  \Pr\left( \left\vert \inf_{\beta \in \mathbb{R}^d} \E_{\Pr_\ast}\left[l(X,Y;\beta)\right]  -
  \inf_{\beta \in \mathbb{R}^d}\sup_{\Pr\,\in\,\mathcal{U}_\delta(\Pr_n)}
  \E_\Pr\left[ l(X,Y;\beta)\right]\right\vert < \frac{C}{\sqrt{n}} \right) \geq 1- 2\alpha, 
\end{align*}
for some positive constant $C$ depending on $\rho,$
$\E_{\Pr_\ast}\left[ \ell(X,Y;\beta_\ast)\right]$ and
$\textnormal{Var}_{\Pr_\ast}\left[ \ell(X,Y;\beta_\ast\right].$
% with probability exceeding $1-2\alpha,$ as $n \rightarrow \infty.$ 
% where $C := C_3 + C_4 \mathbf{1}_{\{ \rho = 2\}}$ with
% $C_3:=\sqrt{\alpha^{-1}\textnormal{Var}_{\Pr_\ast}[l(X,Y;\beta_\ast)]}
% %\left( 1 + 2\sqrt{\eta}\Vert \beta_\ast \Vert_p n^{-1/4}\right) 
% + (2\rho-1) \eta\Vert \beta_\ast \Vert_p^\rho n^{-(\rho-1)/2}$ and 
% $C_4 = 2\sqrt{\eta}\Vert \beta_\ast \Vert_p(\sqrt{
%     E_{\Pr_\ast}[l(X,Y;\beta_\ast)]} +
%   (\alpha^{-1}n^{-1}\textnormal{Var}_{\Pr_\ast}[l(X,Y;\beta_\ast)])^{1/4}).$   
\end{theorem}
Proofs of Propositon \ref{Prop-Gen-Bias-Control} and Theorem
\ref{thm-Gen-err-control} are furnished in Appendix A.2 in the
supplementary material. Explicit prescriptions for the selection of
$\delta$ satisfying conditions of Theorem \ref{thm-Gen-err-control}
for the case of linear and logistic regression examples are provided
in Sections \ref{Sec-RWPF-Lin-Reg} and \ref{Sec-RWPF-Log-Reg}.

In contrast to the $O(n^{-1/d})$ rate of convergence for the
prescription of $\delta$ resulting from concentration inequalities for
$D_c(\Pr_n, \Pr_\ast)$ (see, for example, \cite[Theorem
2]{shafieezadeh-abadeh_distributionally_2015}, \cite[Theorem
3.5]{esfahani_data-driven_2015-1}), Theorem \ref{thm-Gen-err-control}
asserts that the DRO formulation with RWPI based prescription for
$\delta$ enjoys the optimal $O(n^{-1/2})-$rate of convergence for the
optimal risk. Roughly speaking, this is because the objective of RWPI
is to choose the radius $\delta$ resulting in good coverage properties
for the optimal parameter $\beta_\ast,$ which has $d-$degrees of
freedom; on the other hand, the objective behind concetration
inequalities is to choose $\delta$ with good coverage properties for
the data-generating probability distribution itself, which is an
infinite dimensional object. It is well-known that the distance
between a probability distribution and an empirical version of itself
constituting $n-$independent samples is $\Omega(n^{-1/d})$ as
$n \rightarrow \infty$ (see, for example, \cite{talagrand1992}).

Coverage for the optimal risk, for the particular example of LASSO
estimator, can also be derived, for example, from the limit theorems
in \cite{knight2000}. Once $\delta$ is chosen using RWP function, as
it can be seen from the proofs of Proposition
\ref{Prop-Gen-Bias-Control} and Theorem \ref{thm-Gen-err-control}, the
deduction of the rate of convergence and coverage turns out to be
fairly intuitive and simple. This serves to illustrate the fundamental
role played by the RWP function in determining the radius of the
uncertainty set.  A unified profile function based method to deduce
coverage of optimal risk for regularized estimators is entirely
novel. We believe that the approach described here could serve as a
template for deducing similar coverage guarantees for more general DRO
formulations that are not necessarily amenable to be recast as
regularized estimators.

\subsection{Linear regression models with squared loss function}
\label{Sec-RWPF-Lin-Reg}

%For minimizing average squared loss, the optimality condition
%corresponds to expected value of its derivative, given by
%$(Y - \beta^T)X,$ being equal to zero.

In this section, we derive the asymptotic limiting distribution of suitably
scaled profile function corresponding to the estimating equation, 
%\begin{align*}
$\E[(Y-\beta^{T}X)X] = \mathbf{0}.$
%\end{align*}
The chosen estimating equation describes the optimality condition for
the expected loss % function $\E[l(x,y;\beta)] =
$\E[(Y-\beta^{T}X)^{2}],$ and therefore, the corresponding
$R_{n}(\beta_{\ast})$ is suitable for choosing
$\delta$ as in \eqref{delta-sel}, and the regularization parameter
$\lambda= \sqrt{\delta}$ in Example \ref{Ex_GLasso}.

\subsubsection{A stochastic upper bound for the RWP limit.}
Let $H_{0}$ denote the null hypothesis that the training samples
$\{(X_{1},Y_{1}),\ldots,(X_{n},Y_{n})\}$ are obtained independently from the
linear model $Y=\beta_{\ast}^{T}X+e$, where the error term $e$ has zero mean,
variance $\sigma^{2},$ and is independent of $X.$ Let $\Sigma= \E[XX^T].$

\begin{theorem}
\label{Thm-WPF} Consider the discrepancy measure $\mathcal{D}_{c}(\cdot)$
defined as in (\ref{Discrepancy_Def}) using the cost function
$c((x,y),(u,v))=(N_{q}((x,y),(u,v)))^{2}$ (the function $N_{q}$ is defined in
(\ref{Modified-Cost-Fn})). For $\beta\in\mathbb{R}^{d},$ let
\begin{align*}
R_{n}(\beta) = \inf\big\{ D_{c}(\Pr,\Pr_{n}): \ \E_{\Pr}\big[(Y-\beta^{T}X)X\big] =
\mathbf{0}\big\}.
\end{align*}
Then, under the null hypothesis $H_{0},$
\[
nR_{n}(\beta_{\ast}) \Rightarrow L_{1}:=\ \max_{\xi\in\mathbb{R}^{d}}\left\{
2\sigma\xi^{T}Z-\E\left\Vert e\xi-(\xi^{T}X)\beta_{\ast}\right\Vert _{p}%
^{2}\right\}  ,
\]
as $n\rightarrow\infty$. In the above limiting relationship, $Z\sim
\mathcal{N}(\mathbf{0},\Sigma).$ Further,
\[
L_{1}\overset{D}{\leq}\ L_{2}:=\frac{\E[e^{2}] }{\E[e^{2}] - \left(
\E|e|\right)  ^{2}}\Vert Z\Vert_{q}^{2}.
\]
Specifically, if the additive error term $e$ follows a centered normal
distribution, then
\[
L_{1}\overset{D}{\leq}\ L_{2}:=\frac{\pi}{\pi-2}\Vert Z\Vert_{q}^{2}.
\]

\end{theorem}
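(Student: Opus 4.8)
The plan is to treat this as an instance of the general estimating-equation theory of Section \ref{Sec_Quad_WPF}, taking $W=(X,Y)$, estimating function $h(W,\beta)=(Y-\beta^{T}X)X$, exponent $\rho=2$, and the modified cost $c=(N_q)^2$, and then to post-process the resulting limit law $\bar R(2)$ into the explicit random variable $L_1$ and finally dominate it by $L_2$. So there are three separate tasks: (i) obtain $nR_n(\beta_\ast)\Rightarrow L_1$; (ii) show $L_1\overset{D}{\le}L_2$; (iii) evaluate $L_2$ for Gaussian $e$.

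For (i) I would invoke Theorem \ref{Thm-WPF_RWPI} (upper bound) together with Proposition \ref{Prop_SLB_rho_L_1} (matching lower bound, valid since $\rho=2>1$). The one point needing care is that the cost is the modified $N_q$ rather than a genuine power of a norm: since $N_q$ assigns infinite cost to any perturbation of the $Y$-coordinate, the transport plan never moves $Y$, so in the dual representation \eqref{RDisc} the perturbation $\Delta$ lives only in the $X$-coordinates and the relevant gradient is $D_xh(w,\beta_\ast)$ rather than the full $D_wh$. Under $H_0$ we have $Y-\beta_\ast^{T}X=e$, and a direct computation gives $D_xh(w,\beta_\ast)=eI-X\beta_\ast^{T}$, so that $\zeta^{T}D_xh(w,\beta_\ast)=e\zeta^{T}-(\zeta^{T}X)\beta_\ast^{T}$ and $\|\zeta^{T}D_xh\|_p=\|e\zeta-(\zeta^{T}X)\beta_\ast\|_p$. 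I would then verify A1)--A6): A2) and the finite-variance requirement follow from $E[e^2]=\sigma^2<\infty$ and $E\|X\|_2^2<\infty$ (with $h=eX$, $E[h]=\mathbf 0$, $\mathrm{Cov}[h]=\sigma^2E[XX^{T}]$ by independence of $e$ and $X$); A3) is clear as $h$ is polynomial; A4) holds because $e\zeta-(\zeta^{T}X)\beta_\ast\neq \mathbf 0$ with positive probability whenever $\zeta\neq 0$ (as $e$ is nondegenerate and independent of $X$); and A5)--A6) hold because $D_xh$ is affine in $w$, so its $p$-norm grows linearly ($\rho-1=1$) and it is globally Lipschitz. Substituting into $\bar R(2)=\max_\zeta\{2\zeta^{T}H-E\|\zeta^{T}D_xh\|_p^2\}$ with $H\sim\mathcal N(0,\sigma^2 E[XX^{T}])$, and writing $H\overset{D}{=}\sigma Z$ with $Z\sim\mathcal N(0,\Sigma)$ (here $\Sigma=E[XX^{T}]$, which equals $\mathrm{Cov}[X]$ after centering $X$), produces exactly $L_1=\max_\xi\{2\sigma\xi^{T}Z-E\|e\xi-(\xi^{T}X)\beta_\ast\|_p^2\}$.

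For (ii), since $L_1$ and $L_2$ are both deterministic functions of the same Gaussian $Z$, I would prove the domination pathwise. The penalty $Q(\xi):=E\|e\xi-(\xi^{T}X)\beta_\ast\|_p^2$ is $2$-homogeneous, so bounding the linear term by Hölder's inequality, $2\sigma\xi^{T}Z\le 2\sigma\|\xi\|_p\|Z\|_q$, together with a lower bound $Q(\xi)\ge C\|\xi\|_p^2$ reduces the maximization to the scalar problem $\max_{t\ge 0}\{2\sigma t\|Z\|_q-Ct^2\}=\sigma^2\|Z\|_q^2/C$, giving $L_1\le(\sigma^2/C)\|Z\|_q^2$. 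The crux is the lower bound on $Q$, which I would get from the dual-norm representation: letting $a^\ast$ be the unit $\ell_q$-vector with $a^{\ast T}\xi=\|\xi\|_p$, one has $\|e\xi-(\xi^{T}X)\beta_\ast\|_p\ge|a^{\ast T}(e\xi-(\xi^{T}X)\beta_\ast)|=|e\|\xi\|_p-(\xi^{T}X)\,a^{\ast T}\beta_\ast|$; squaring and taking expectations, the cross term vanishes because $E[e\,\xi^{T}X]=\xi^{T}E[e]E[X]=0$ by independence and $E[e]=0$, leaving $Q(\xi)\ge\|\xi\|_p^2E[e^2]+(a^{\ast T}\beta_\ast)^2E[(\xi^{T}X)^2]$. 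Discarding the nonnegative $\beta_\ast$-term already yields a $\beta_\ast$-free constant, and since $E[e^2]\ge E[e^2]-(E|e|)^2$ one obtains in particular $Q(\xi)\ge(E[e^2]-(E|e|)^2)\|\xi\|_p^2$; with $\sigma^2=E[e^2]$ this gives $L_1\le\frac{E[e^2]}{E[e^2]-(E|e|)^2}\|Z\|_q^2=L_2$.

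Finally, for (iii) the Gaussian specialization is a one-line substitution: if $e\sim\mathcal N(0,\sigma^2)$ then $E|e|=\sigma\sqrt{2/\pi}$, so $(E|e|)^2=2\sigma^2/\pi$ and $\frac{E[e^2]}{E[e^2]-(E|e|)^2}=\frac{1}{1-2/\pi}=\frac{\pi}{\pi-2}$. The main obstacle, in my view, is not any single computation but the passage in (i) from the general theorems to this setting through the modified cost $N_q$ — justifying that only the $X$-coordinates are perturbed, so that the effective gradient is $D_xh$ — together with verifying the growth and Lipschitz assumptions A5)--A6) needed for the lower bound to match the upper bound at $\rho=2$; once $L_1$ is correctly identified, the domination in (ii) and the moment evaluation in (iii) are comparatively routine convex-analysis and probability estimates.
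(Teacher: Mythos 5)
Your proposal is correct, and parts (i) and (iii) follow essentially the same route as the paper: the paper also reduces the modified cost $N_q$ via the dual (noting that $y'\neq Y_i$ incurs infinite cost, so only the $X$-coordinates are perturbed and the effective gradient is $D_xh$), formalizes this as Lemma \ref{Lem-Reg-RWP} under the primed assumptions A2'), A4'), A6'), verifies them exactly as you do (with $D_xh = eI_d - X\beta_\ast^T$ affine, hence globally Lipschitz), and obtains $L_1$ by writing $H = \sigma Z$. Where you genuinely diverge is the key step of part (ii), the lower bound on $Q(\xi) = E\Vert e\xi - (\xi^TX)\beta_\ast\Vert_p^2$ over $\Vert\xi\Vert_p = 1$ (both proofs share the outer skeleton: H\"{o}lder on the linear term plus a scalar maximization over the magnitude). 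The paper bounds $\Vert e\xi - (\xi^TX)\beta_\ast\Vert_p$ below by the triangle inequality as $\big\vert\, \vert e\vert \Vert\xi\Vert_p - \vert\xi^TX\vert \Vert\beta_\ast\Vert_p \big\vert$, expands, and must then absorb a non-vanishing cross term $2\Vert\beta_\ast\Vert_p E\vert e\vert E\vert\xi^TX\vert$ via Jensen's inequality and completing a square, which costs $(E\vert e\vert)^2$ and yields the constant $\text{Var}\vert e\vert$. You instead test against the dual-norm witness $a^\ast$ (with $a^{\ast T}\xi = \Vert\xi\Vert_p$, $\Vert a^\ast\Vert_q = 1$), keeping the signs of $e$ and $\xi^TX$, so the cross term vanishes \emph{exactly} by independence and $E[e]=0$; this gives $Q(\xi) \geq E[e^2]\Vert\xi\Vert_p^2$ and hence the strictly sharper pathwise bound $L_1 \leq \Vert Z\Vert_q^2$, from which the theorem's $L_1 \overset{D}{\leq} L_2$ follows a fortiori since $E[e^2]/(E[e^2]-(E\vert e\vert)^2) > 1$. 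What each buys: your argument is more elementary (no Jensen, no completed square) and shows the $\text{Var}\vert e\vert$ inflation factor is an artifact of the paper's population-level estimate; the paper's cruder estimate, on the other hand, is the one whose empirical analogue transfers verbatim to the high-dimensional setting of Theorem \ref{Thm-RWP-UB-d-Growing}, where the empirical cross term $\frac{1}{n}\sum_i e_i(\zeta^TX_i)$ no longer cancels exactly but only uniformly in probability, so the $\text{Var}\vert e\vert$ constant is the one that survives there.
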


\noindent
%Theorem \ref{Thm-WPF} follows from a direct application of Theorem
%\ref{Thm-WPF_RWPI} if we let $h(x,y; \beta) = (y-\beta^Tx)x.$
In the above theorem, the relationship $L_{1}\overset{D}{\leq}L_{2}$
denotes that $L_{1}$ is stochastically dominated by $L_{2},$ in the
sense that, $\Pr(L_1 \geq x) \leq \Pr(L_2 \geq x)$ for all
$x \in \mathbb{R}.$ Note that this notation for stochastic upper bound
is different from the notation $\lesssim_{D}$ introduced in Section
\ref{Sec_Quad_WPF} to denote asymptotic stochastic upper bound. A
proof of Theorem \ref{Thm-WPF} as an application of Theorem
\ref{Thm-WPF_RWPI} and Proposition \ref{Prop_SLB_rho_L_1} is presented
in Appendix Section A.4 (see supplementary
material \cite{supp_RWPI}).

\subsubsection{Using Theorem \ref{Thm-WPF} to obtain regularization
  parameter for (\ref{Lp-Pen-LR}).} Let $\eta_{_{1-\alpha}}$ denote
the $(1-\alpha)$ quantile of the limiting random variable $L_{1}$ in
Theorem \ref{Thm-WPF}, or its stochastic upper bound $L_{2}.$ % If we
% choose $\delta= \eta_{_{1-\alpha}}/n,$ it follows from Theorem
% \ref{Thm-WPF} that
% %\begin{align*}
% $\Pr(\beta_\ast \in \Lambda_n(\delta)) = \Pr(R_{n}(\beta_{\ast}) \leq\delta)
% \geq1 - \alpha,$
% %\end{align*}
% asymptotically, as $n \rightarrow\infty.$ % and consequently,
% % \begin{align*}
% % \Pr \big(\mathcal{P}_{opt}(\beta_{\ast}) \cap\mathcal{U}_{\delta}(\Pr_{n})
% % \neq\emptyset\big) \geq1 - \alpha.
% % \end{align*}
% In other words, the optimal regression coefficient $\beta_{\ast}$
% remains plausible (for the DRO formulation \eqref{RWPI-DRO}) with
% probability exceeding $1-\alpha$ with this choice of $\delta.$ Due
% to the distributionally robust representation derived in Theorem
% \ref{Cor-lp-good-equiv}, a prescription for the uncertainty set size
% $\delta$ naturally provides the prescription,
% $\lambda= \sqrt{\delta},$ for the regularization parameter as well.
Then following the prescription in \eqref{delta-sel} and the DRO
equivalence in Theorem \ref{Cor-lp-good-equiv}, regularization
parameter for the $\ell_{p}$-penalized linear regression in
(\ref{Lp-Pen-LR}) can be chosen as follows:
\begin{itemize}
\item[1)] Draw samples $Z$ from $\mathcal{N}(\mathbf{0}, \Sigma)$ to estimate
the $1-\alpha$ quantile of one of the random variables $L_{1}$ or $L_{2} $ in
Theorem \ref{Thm-WPF}. Let us use $\hat{\eta}_{_{1-\alpha}}$ to denote the
estimated quantile. While $L_{2}$ is simply the norm of $Z,$ obtaining
realizations of limit law $L_{1}$ involves solving an optimization problem for
each realization of $Z.$ If $\Sigma= \E[XX^T]$ is not known, one can use
a simple plug-in estimator for $\E[XX^T]$ in place of $\Sigma.$

\item[2)] Choose the regularization parameter $\lambda$ to be,
\begin{align*}
\lambda= \sqrt{\delta} = \sqrt{{\hat{\eta}_{_{1-\alpha}}}/{n}}.
\end{align*}
\end{itemize}

It is interesting to note that the prescription of regularization
parameter obtained by using $L_2$ does not depend on the variance of
$e,$ thus removing the need for estimating the variance of $e.$ This
property is a key advantage of the use of square-root LASSO estimator
over the traditional LASSO (see \cite{belloni_square-root_2011}).

\subsubsection{On the approximation ratio $L_2/L_1$ when $p=q=2$.}
In the case where $q$ is taken to be $q = p = 2$ in Theorem
\ref{Cor-lp-good-equiv} (corresponding to $\ell_2-$penalization as in
ridge regression), it is possible to obtain an explicit expression for
the limit law $L_1$ as follows: Under the assumptions stated in
Theorem \ref{Thm-WPF}, we have
$\E[(e\mathbb{I}_d - X\beta_\ast^T) (eI_d - X\beta_\ast^T)^T] =
\sigma^2 \mathbb{I}_d + \Vert\beta_\ast\Vert^2 \Sigma.$ Then, as in
\eqref{simplified-rho-p-eq-2}, we obtain,
$L_1 = \sigma^2 Z^T\left(\sigma^2 \mathbb{I}_d +
  \Vert\beta_\ast\Vert^2 \Sigma\right)^{-1}Z.$ Suppose that $X$ is
centered so that $\E[X] = \mathbf{0}$ and $\Sigma$ is invertible. Then,
if $\Sigma = U\Lambda U^T$ is the eigen decomposition of $\Sigma,$ we
have that $N = \Lambda^{-1/2}U^T Z$ has normal distribution with mean
$\mathbf{0}$ and covariance $\mathbb{I}_d.$ As a result,
\begin{align*}
  L_1 = \sigma^2 Z^T\left(\sigma^2 \mathbb{I}_d + 
  \Vert\beta_\ast\Vert^2 \Sigma\right)^{-1}Z = \sum_{i=1}^d
  \frac{\Lambda_{ii}}{1+ \Lambda_{ii}\Vert \beta_\ast \Vert^2/\sigma^2 }
  N_i^2, \text{ and }
\end{align*}
%On the other hand, 
\begin{align*}
  \frac{\E[e^2] - (\E\vert e
  \vert)^2}{\E[e^2]} L_2 = \Vert Z \Vert_2^2 =
  \sum_{i=1}^d\Lambda_{ii}N_i^2. % \leq L_1\left(1 + \lambda_{max} \Vert \beta_\ast
  % \Vert^2/\sigma^2\right), 
\end{align*}
If we let
$c_1 = 1 + \sigma^{-2}\Vert \beta_\ast\Vert^2
\max_{i=1,\ldots,d}\Lambda_{ii}$ and
$c_2 = \text{Var}[e]/\text{Var}\vert e \vert,$ we arrive at the
relationship that, $L_1 \leq L_2 \leq c_1c_2L_1.$

One could aim to achieve lower bias in estimation by working with the
$(1-\alpha)$-quantile of the limit law $L_1$ (see Proposition
\ref{Prop-Gen-Bias-Control}), instead of that of the stochastic upper
bound $L_2.$ In order to do so, we propose to use any consistent
estimator for $\beta_\ast$ to be plugged in the expression for $L_1$
to result in asymptotically optimal prescription for $\delta.$ The
argument goes as follows: Let us write the limit law $L_1$ as
$L_1(\beta_\ast)$ in order to make the dependence of the limit law
$L_1$ on $\beta_\ast$ explicit. As $L_1(\cdot)$ is a continuous
function, if $\beta _{n}\rightarrow \beta _{\ast }$ in probability, we
have
% \begin{equation*}
% nR_{n}\left( \beta _{\ast }\right) \Rightarrow R\left( \beta _{\ast }\right)
% ,
% \end{equation*}%
% and $R\left( \cdot \right) $ is continuous in $\beta _{\ast }$, then  
\begin{equation*}
nR_{n}\left( \beta _{\ast }\right) - L_1\left( \beta _{n}\right) = \left(nR_{n}\left(
\beta _{\ast }\right) -L_1\left( \beta _{\ast }\right) \right) + \left(L_1\left( \beta _{\ast
}\right) - L_1\left( \beta _{n}\right)\right) \Rightarrow 0.
\end{equation*}
One could use, for example, sample average approximations (without
regularization) to compute $\beta _{n}$. % As mentioned earlier, observe
% that the limiting distribution in [3] is not necessarily continuous
% in $\beta _{\ast }$. Then
% Thus, utilizing the continuity of the limit $L_1$ as a function of
% $\beta_\ast,$
We seek to verify in future research that the estimator
obtained via this plug-in approach indeed enjoys better generalization
guarantees.

\subsection{Logistic Regression with log-exponential loss
function\label{Sec-RWPF-GLM}}
\label{Sec-RWPF-Log-Reg}
In this section, we apply results in Section \ref{Sec_Quad_WPF} to prescribe
regularization parameter for $\ell_{p}$-penalized logistic regression in
Example \ref{Ex_RLR}.

\subsubsection{A stochastic upper bound for the RWP function.}
Let $H_{0}$ denote the null hypothesis that the training samples $(X_{1}%
,Y_{1}), \ldots, (X_{n},Y_{n})$ are obtained independently from a logistic
regression model satisfying
\begin{align*}
\log\left(  \frac{\Pr(Y = 1 | X = x)}{1-\Pr(Y = 1|X = x)} \right)  = \beta_{\ast
}^{T}x,
\end{align*}
for predictors $X \in\mathbb{R}^{d}$ and corresponding responses $Y
\in\{-1,1\};$ further, under null hypothesis $H_{0},$ the predictor $X$ has
positive density almost everywhere with respect to the Lebesgue measure on
$\mathbb{R}^{d}.$ The log-exponential loss (or negative log-likelihood) that
evaluates the fit of a logistic regression model with coefficient $\beta$ is
given by
\begin{align*}
l(x,y;\beta) = -\log p(y|x;\beta) = \log\big(1 + \exp(-y\beta^{T}x ) \big).
\end{align*}
If we let
\begin{align}
h(x,y;\beta) = D_{\beta}l(x,y;\beta) = \frac{-yx}{1+\exp(y\beta^{T}x)},
\label{Est-Eq-RLogR}%
\end{align}
then the optimal $\beta^{\ast}$ satisfies the first order condition
that 
%\begin{align*}
$\E \left[  h(x,y;\beta_{\ast})\right]  = \mathbf{0}.$
%\end{align*}
%Similar to the example of linear regression in Section \ref{Sec-RWPF-GLM},
%the corresponding RWP function takes the form,
%\begin{align*}
%R_n(\beta) = \inf \left\{D_c(P,P_n): \E_P \big[ h(x,y;\beta) \big] = \mathbf{0%
%} \right\},
%\end{align*}
%and $R_{n}(\beta )\rightarrow 0$ if and only if $\beta =\beta _{\ast }$. The
%following theorem characterizes the rate of convergence of $R_n(\beta_\ast):$

\begin{theorem}
\label{Thm-GLM-WPF} Consider the discrepancy measure $\mathcal{D}_{c} (\cdot)
$ defined as in $($\ref{Discrepancy_Def}$)$ using the cost function
$c((x,y),(u,v))=N_{q}((x,y),(u,v))$ $($the function $N_{q}$ is defined in
(\ref{Modified-Cost-Fn})). For $\beta\in\mathbb{R}^{d},$ let
\begin{align*}
R_{n}(\beta) = \inf\left\{  D_{c}(\Pr,\Pr_{n}): \E_{\Pr} \big[ h(x,y;\beta) \big] =
\mathbf{0} \right\}  ,
\end{align*}
where $h(\cdot)$ is defined in (\ref{Est-Eq-RLogR}). Then, under the null
hypothesis $H_{0},$
\[
\sqrt{n}R_{n}(\beta_{\ast}) \Rightarrow\ L_{3}:=\ \sup_{\xi\in A} \xi^{T}Z
\]
as $n\rightarrow\infty.$ In the above limiting relationship,
\begin{align*}
  Z &\sim\mathcal{N}\left(  \mathbf{0},\E\left[  \frac{XX^{T}}{(1+\exp
      (Y\beta_{\ast}^{T}X))^{2}} \right]  \right)  \text{ and }\\
  A &= \left\{  \xi
  \in\mathbb{R}^{d}: \mathnormal{\ ess}\ \mathnormal{sup}_{x,y} \big\| \xi^{T}%
  D_{x}h(x,y;\beta_\ast)\big\|_{p} \ \leq1 \right\}  .
\end{align*}
Moreover, the limit law $L_{3}$ admits the following simpler stochastic
bound:
\[
L_{3}\overset{D}{\leq}\ L_{4}:= \|\tilde{Z}\|_{q},
\]
where $\tilde{Z} \sim\mathcal{N}(\mathbf{0},\E[XX^{T}]).$
\end{theorem}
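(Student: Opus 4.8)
The plan is to read Theorem \ref{Thm-GLM-WPF} as an instance of the general $\rho=1$ theory of Section \ref{Sec_Quad_WPF}, applied with $W=(X,Y)$, $\theta=\beta$, and the estimating function $h$ of \eqref{Est-Eq-RLogR}, but with the modified transport cost $N_q$ of \eqref{Modified-Cost-Fn} (to the first power, hence $\rho=1$, matching the scaling $\sqrt n R_n$) in place of the power cost $\|\cdot\|_q^\rho$. Since $N_q$ assigns infinite cost to any movement of the $Y$-coordinate, the transport perturbs only the predictor $X$; consequently the object playing the role of $D_wh$ in Theorem \ref{Thm-WPF_RWPI} is the partial derivative $D_xh(x,y;\beta_*)$, and the relevant positive-density hypothesis is that of $X$, which is part of $H_0$. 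A direct computation gives
$$D_xh(x,y;\beta_*) = -\frac{y}{1+e^{y\beta_*^Tx}}\,I + \frac{e^{y\beta_*^Tx}}{(1+e^{y\beta_*^Tx})^2}\,x\beta_*^T,$$
which is continuous in $x$ (so A3 holds), and, using $Y^2=1$ together with $E[h(W,\beta_*)]=\mathbf0$,
$$\text{Cov}[h(W,\beta_*)] = E\!\left[\frac{XX^T}{(1+e^{Y\beta_*^TX})^2}\right],$$
exactly the covariance of $Z$. Assumption A2 follows from $E\|X\|_2^2<\infty$ and the bound $(1+e^{y\beta_*^Tx})^{-2}\le1$, while A4 follows from examining $\xi^TD_xh$ as $y\beta_*^Tx\to-\infty$.

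With these identifications, I would invoke the upper bound of Theorem \ref{Thm-WPF_RWPI} and the $\rho=1$ lower bound of Proposition \ref{Prop_SLB_rho_1}, adapting their proofs in Section \ref{Sec-Quad-WPF-Proofs} to the cost $N_q$: the dual representation \eqref{RDisc} is re-derived with the inner supremum taken only over $x$-perturbations, after which the arguments run verbatim with $D_wh$ replaced by $D_xh$ and the positive density of $W$ replaced by that of $X$. This yields $\sqrt n\,R_n(\beta_*)\Rightarrow \bar R(1)=\max\{\xi^TH : P(\|\xi^TD_xh(W,\beta_*)\|_p>1)=0\}$ with $H\sim Z$. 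Because $X$ has positive density, the almost-sure constraint is equivalent to $\text{ess sup}_{x,y}\|\xi^TD_xh(x,y;\beta_*)\|_p\le1$, i.e. $\xi\in A$, giving the first assertion $\sqrt n\,R_n(\beta_*)\Rightarrow L_3=\sup_{\xi\in A}\xi^TZ$.

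For the stochastic bound $L_3\overset{D}{\le}L_4$, the key step is to show $A\subseteq\{\xi:\|\xi\|_p\le1\}$. Writing $\mu=(1+e^{y\beta_*^Tx})^{-1}$, one has $\xi^TD_xh=-y\mu\,\xi^T+\mu(1-\mu)(\xi^Tx)\beta_*^T$; along a ray on which $y\beta_*^Tx\to-\infty$ the factor $\mu(1-\mu)\approx e^{y\beta_*^Tx}$ decays exponentially and kills the linearly growing $\xi^Tx$, so $\|\xi^TD_xh\|_p\to\|\xi\|_p$. Since $X$ has positive density this limiting value lies below the essential supremum, forcing $\|\xi\|_p\le1$ whenever $\xi\in A$. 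By Hölder duality $L_3\le\sup_{\|\xi\|_p\le1}\xi^TZ=\|Z\|_q$ pointwise. Finally, $(1+e^{Y\beta_*^TX})^{-2}\le1$ gives $\text{Cov}[Z]\preceq E[XX^T]=\text{Cov}[\tilde Z]$, so I would write $\tilde Z=Z+Z'$ with $Z'$ an independent centered Gaussian of covariance $E[XX^T]-\text{Cov}[Z]\succeq0$ and apply Anderson's inequality to the symmetric convex ball $\{\|w\|_q\le t\}$, conditioning on $Z'$, to get $\|Z\|_q\overset{D}{\le}\|\tilde Z\|_q=L_4$. Chaining the pointwise and distributional inequalities yields $L_3\overset{D}{\le}L_4$.

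The hard part will be the rigorous transfer of Theorem \ref{Thm-WPF_RWPI} and Proposition \ref{Prop_SLB_rho_1} from the power cost to the modified cost $N_q$: one must re-examine the dual and the representation \eqref{RDisc} to confirm that freezing the $Y$-coordinate genuinely reduces the problem to one in $X$-space with $D_xh$ in place of $D_wh$, and that the positive density of $X$ (rather than of the full vector $W$, which cannot admit a Lebesgue density since $Y$ is discrete) suffices to match the lower bound to the upper bound. A secondary, more routine, point is justifying that the limiting value $\|\xi\|_p$ is actually captured by the essential supremum defining $A$, which again relies on the positive density of $X$ (and on $\beta_*\neq\mathbf0$ so that $y\beta_*^Tx$ can be driven to $-\infty$).
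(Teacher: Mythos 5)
Your proposal is correct and follows essentially the same route as the paper: the paper likewise reduces to the $\rho=1$ theory via a lemma stated for the modified cost (Lemma \ref{Lem-Reg-RWP-Eq-1}, whose proof is omitted as ``following closely'' Section \ref{Sec_WPF}) with $D_xh$ replacing $D_wh$ and the positive density of $X$ replacing that of $W$, verifies A2') and A4') with the same computation of $D_xh$ and $\mathrm{Cov}[h(X,Y;\beta_\ast)]$, and establishes $A\subseteq\{\xi:\Vert\xi\Vert_p\le 1\}$ by exhibiting a positive-probability region $\Omega_\epsilon$ where $y\beta_\ast^Tx$ is driven far negative so the rank-one term is exponentially suppressed --- the same idea as your ray-limit argument, with the same implicit reliance on $\beta_\ast\neq\mathbf{0}$. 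The one point where you are more careful than the paper is the final step, where the paper merely asserts stochastic dominance from $\mathrm{Cov}[\tilde Z]-\mathrm{Cov}[Z]$ being positive (semi)definite, whereas you justify it by writing $\tilde Z = Z + Z'$ and invoking Anderson's inequality.
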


\noindent A proof of Theorem \ref{Thm-WPF} as an application of Theorem
\ref{Thm-WPF_RWPI} and Proposition \ref{Prop_SLB_rho_1} is presented in
Appendix A.4 (see supplementary material
\cite{supp_RWPI}). 

\subsubsection{Using Theorem \ref{Thm-GLM-WPF} to obtain regularization
parameter for (\ref{Lp-Pen-LogR}) .} Similar to linear regression, the
regularization parameter for Regularized Logistic Regression discussed in
Example \ref{Ex_RLR} can be chosen by the following procedure:

\begin{itemize}
\item[1)] Estimate the $(1-\alpha)$-quantile of $L_{4} := \|\tilde{Z}\|_{q},$
where $\tilde{Z} \sim\mathcal{N}(\mathbf{0},\E[XX^{T}]).$ Let us use $\hat
{\eta}_{_{1-\alpha}}$ to denote the estimate of the quantile.

\item[2)] Choose the regularization parameter $\lambda$ in the norm
regularized logistic regression estimator (\ref{Lp-Pen-LogR}) in Example
\ref{Ex_RLR} to be, 
\begin{align*}
\lambda= \delta= {\hat{\eta}_{_{1-\alpha}}}/\sqrt{n}.
\end{align*}

\end{itemize}

\subsection{Optimal regularization in high-dimensional square-root LASSO
  \label{Sec-Nonasymp-Gen-Lasso}}

In this section, let us restrict our attention to the square-loss
function $l(x,y;\beta) = (y - \beta^{T}x)^{2}$ for the linear
regression model and the discrepancy measure $D_{c}$ defined using the
cost function $c = N_{q}$ with $q = \infty$ in
(\ref{Modified-Cost-Fn}). Then, due to Theorem
\ref{Cor-lp-good-equiv}, this corresponds to the interesting case of
square-root LASSO or $\ell_{2}$-LASSO that was rather a particular
example in the class of $\ell_{p}$ norm penalized linear regression
estimators considered in Section \ref{Sec-RWPF-Lin-Reg}.

%A closer examination of the above equivalence reveals something equally
%important: that the regularization parameter in generalized Lasso is just
%same as $\sqrt{\delta}$ in the distributional optimization problem.
%Therefore, a good prescription of $\delta$ is in turn a good prescription of
%the regularization parameter!

%Identifying a good regularization parameter is indeed an important problem
%in regularized regression, and it has generated a lot of attention (see, for
%example, \cite%
%{hastie_elements_2005,fan_selective_2010,lorbert_descent_2010,fan_tuning_2013}
%and references therein).

As an interesting byproduct of the RWP function analysis, the
following theorem presents a prescription for regularization parameter
even in high dimensional settings where the ambient dimension $d$ is
larger than the number of samples $n.$ Given observations
$\{(X_i,Y_i): i = 1,\ldots,n\}$ from the linear model
$Y = \beta_\ast^TX + e,$ let
$\tilde{e}_i := (Y_i - \beta_\ast^TX_i)/\sigma,$ for $i =
1,\ldots,n.$ % and
  % assume the largest eigenvalue of $\Sigma=\text{Cov}[X]$ be
  % $o({n}C(n,d)^{2})$. In addition, suppose that $\beta_{\ast}$
  % satisfies a weak sparsity condition that
  % $\Vert\beta_{\ast }\Vert_{1}=o(1/C(n,d)).$
We have that the variance of the normalized error terms $\tilde{e}_i$
do not depend on $\sigma.$

% We introduce the growth parameter,
% \begin{align*}
% C(n,d) := \frac{\E \left\|  X \right\|  _{\infty}}{\sqrt{n}} = \frac{\E \left[
% \max_{i=1,\ldots,d} |X_{i}| \right]  }{\sqrt{n}},
% \end{align*}
% as a function of $n$ and $d,$ that will be useful in stating our results. In
% addition,
% We say that the predictors $X$ have \textit{sub-gaussian tails} if
% there exists a constant $a > 0,$
% \begin{align*}
% \E \left[  \exp(t^{T}X)\right]  \leq\exp(a^{2}\Vert t
%   \Vert_{_{2}}^{2}/2), 
% \end{align*}
% for every $t \in\mathbb{R}^{d}.$

\begin{theorem}
  Suppose that the assumptions imposed in Theorem \ref{Thm-WPF}
  hold. Then, 
\[
nR_{n}(\beta_{\ast}) \overset{D}{\leq} \frac{\Vert Z_{n}\Vert_{\infty}^{2}
}{\text{Var}_n|\tilde{e}|}, 
\]
where, $Z_{n}:=\frac{1}{\sqrt{n}} \sum_{i=1}%
^{n}\tilde{e}_{i}X_{i}$ and
$\text{Var}_n\vert \tilde{e} \vert := \sum_{i=1}^n(\vert \tilde{e}_i
\vert - n^{-1}\sum_{k=1}^n \vert \tilde{e}_i \vert)^2.$ % is the
% empirical variance of $\vert \tilde{e} \vert.$ 
\label{Thm-RWP-UB-d-Growing}
\end{theorem}

\begin{remark}
  \textnormal{Suppose that the additive error $e$ is normally
    distributed and the observations $X_i = (X_{i1},\ldots, X_{id})$
    are normalized so that $n^{-1}\sum_{i=1}^nX_{ij}^2 = 1$ for
    $j = 1,\ldots,d.$ Then, for any $\alpha < 1/8,$
    $C > 0, \varepsilon > 0,$ due to Lemma 1(iii) of
    \cite{belloni_square-root_2011}, the stochastic bound in Theorem
    \ref{Thm-RWP-UB-d-Growing} simplifies as follows: Conditional on
    the observations $\{X_i:i=1,\ldots,n\},$ we have, 
\[
  \sqrt{R_{n}(\beta_{\ast})} \leq \frac{\pi}{\pi-2}\frac{\Phi
    ^{-1}(1-\alpha/2d)}{\sqrt{n}},
\]
with probability asymptotically larger than $1-\alpha,$ as
$n \rightarrow \infty,$ uniformly in $d$ such that
$\log d \leq C n^{1/2-\varepsilon}.$ Here, $\Phi ^{-1}(1-\alpha)$
denotes the quantile $x$ satisfying $\Phi(x)=1-\alpha$ and
$\Phi(\cdot)$ is the cumulative distribution function of the standard
normal distribution defined on $\mathbb{R}.$ Moreover, if the additive
error $e$ is not normally distributed, then under additional
assumption that
$\sup_{n \geq 1}\sup_{1 \leq j \leq d}\E_{\Pr_n}\vert X_j \vert^a < \infty$ for
some $a > 2,$ we obtain from Lemma 2(iii) of
\cite{belloni_square-root_2011} that, 
\[
\sqrt{R_{n}(\beta_{\ast})} \leq \frac{\E[e^2]}{\E[e^2] - (\E\vert e
  \vert)^2}\frac{\Phi^{-1}(1-\alpha/2d)}{\sqrt{n}},
\]
with probability asymptotically larger than $1-\alpha,$ as
$n \rightarrow \infty,$ uniformly in $d$ such that
$ d \leq 0.5\alpha n^{(a-2-\varepsilon)/2}.$ }
\end{remark}

A proof of Theorem \ref{Thm-RWP-UB-d-Growing} is presented in Appendix
A.4 (see supplementary material \cite{supp_RWPI}). A commonly adopted
approach in the high dimensional regression literature (see, for
example,
\cite{bickel2009,negahban_unified_2012,belloni_square-root_2011,banerjee_estimation_2014}
and references therein) is to start with any choice
$\lambda > \Vert \tilde{S}\Vert_q,$ where $\tilde{S}$ is the score
function $D_\beta \E_n\left[l(X,Y;\beta_\ast)\right].$ This choice, in
the context of square-root Lasso, results in the regularization
parameter to be chosen larger than the $(1-\alpha)$-quantile of
$n^{-1/2}\Vert Z\Vert_\infty/\sqrt{\text{Var}_n[\tilde{e}]}$ (see (10)
in \cite{belloni_square-root_2011}). As observed in Theorem
\ref{Thm-RWP-UB-d-Growing}, working with an upper bound of the RWP
function results in choosing the $(1-\alpha)$-quantile of
$n^{-1/2}\Vert Z\Vert_\infty/\sqrt{\text{Var}_n\vert\tilde{e}\vert}.$
Indeed, this agreement of the regularization parameter with the high
dimensional linear regression literature strengthens the RWPI based
approach for selecting the radius of uncertainty. % as
% $\delta \geq \sqrt{R_n(\beta_\ast)}.$ 
Since the RWPI based approach results in a prescription of
regularization parameter that is larger (by a factor
$\text{Var}_n[\tilde{e}]/\text{Var}_n\vert \tilde{e} \vert),$ the
generalization error bounds derived in the literature for high
dimensional regularized regression (see, for example, \cite[Corollary
1]{belloni_square-root_2011}) hold.

The approach in Theorem \ref{Thm-RWP-UB-d-Growing} is to identify an
upper bound that does not depend on $\beta_\ast.$ Instead, one could
choose $\delta \geq R_n(\hat{\beta}_n),$ by plugging in any consistent
estimator $\hat{\beta}_n.$ We identify investigating the possibility
of obtaining tighter error bounds via this plug-in approach as a
subject of future research.

%  The prescription of regularization parameter
% $\lambda = \sqrt{\delta}$ as any choice that exceeds
% \begin{equation}
%   \frac{\pi}{\pi-2}\frac{\Phi^{-1}(1-\alpha/2d)}{\sqrt{n}%
%   }=O\left(  \sqrt{\frac{\log d}{n}}\right)  ,\label{Tun-Param-Pres-II}%
% \end{equation}
% as in Theorem \ref{Thm-RWP-UB-d-Growing}, is consistent with the findings in
% the literature of high-dimensional linear regression (see, for example,
% \cite{belloni_square-root_2011,nguyen_convergence_2013,zhou2015empirical,banerjee_estimation_2014}
% ).
% This agreement strengthens the interpretation of regularization parameter
% in regularized regression as $\sqrt{R_{n}(\beta_{\ast})},$ which, in turn,
% corresponds to the distance of the empirical distribution $\Pr_{n}$ from the set
% $\{\Pr:\E_{\Pr}[(Y-\beta_\ast^{T}X)X]=\mathbf{0}\}$.

% It is also interesting to note that unlike traditional LASSO
% algorithm, the prescription of regularization parameter as in
% \eqref{Tun-Param-Pres-II} is self-normalizing, in the sense that it
% does not depend on the variance of $e,$ even if the number of
% predictors $d$ is larger than $n.$ 

\section{Numerical Examples\label{Sec-Num-Eg}}

In this section, we consider two examples that compare the numerical
performances of the square-root LASSO algorithm (see Example
\ref{Ex_GLasso}) when the regularization parameter $\lambda$ is
selected in the following two ways: 1) as described in Section
\ref{Sec-RWPF-Lin-Reg} using a suitable quantile of the RWPI limiting
distribution, and 2) using cross-validation. For comparison purposes,
we also list the performance of the respective ordinary least squares
estimator. As such, in both the examples, cross-validation based
approach iterates over multitude of choices of $\lambda,$ whereas the
optimal regularization via RWPI utilizes the respective square-root
LASSO algorithm only once for the prescribed value of $\lambda.$ This
naturally suggests potentially huge savings in computation that could
be valuable in large scale settings.

\begin{example}
  \textnormal{Consider the linear model
    $Y = 3X_{1} + 2X_{2} + 1.5X_{4} + e$ where the vector of predictor
    variables $X = (X_{1}, \ldots, X_{d})$ is distributed according to
    the multivariate normal distribution
    $\mathcal{N}(\mathbf{0}, \Sigma)$ with
    $\Sigma_{k,j} = 0.5^{|k-j|}$ and additive error $e$ is normally
    distributed with mean $0$ and standard deviation $\sigma=10.$
    Letting $n$ denote the number of training samples, we illustrate
    the effectiveness of the RWPI based square-root LASSO procedure
    for various values of $d$ and $n$ by computing the mean square
    loss / error (MSE) over a simulated test data set of size
    $N = 10000.$ Specifically, we take the number of predictors to be
    $d = 300$ and $600,$ the number of standardized i.i.d. training
    samples to range from $n = 350, 700, 3500, 10000,$ and the desired
    confidence level to be 95\%, that is, $1-\alpha= 0.95.$ In each
    instance, we run the square-root LASSO algorithm using the `flare'
    package proposed in \cite{li_flare_2015} (available as a library
    in R) with regularization parameter $\lambda$ chosen as prescribed
    in Section \ref{Sec-RWPF-Lin-Reg}. }

  \textnormal{Repeating each experiment 100 times, we report the
    average training and test MSE in Tables
    \ref{Tab-sparseRegressionResult} and
    \ref{Tab-sparseRegressionResult-II}, along with the respective
    results for ordinary least squares regression (OLS) and
    square-root LASSO algorithm with regularization parameter chosen
    as prescribed by cross-validation (denoted as SQ-LASSO CV in the
    tables.) We also report the average $\ell_{1}$ and $\ell _{2}$
    error of the regression coefficients in Tables
    \ref{Tab-sparseRegressionResult} and
    \ref{Tab-sparseRegressionResult-II}. In addition, we report the
    empirical coverage probability that the optimal error
    $\E[(Y - \beta_{\ast}^{T} X)^{2}] = \sigma^{2} = 100$ is smaller
    than the worst case expected loss computed by the DRO formulation
    \eqref{RWPI-DRO}. As this empirical coverage probability reported
    in Table \ref{Tab-Emp-Cov-Prob} is closer to the desired
    confidence $1-\alpha= 0.95,$ the worst case expected loss computed
    by \eqref{RWPI-DRO} can be seen as a tight upper bound of the
    optimal loss $\E[l(X,Y;\beta_{\ast})]$ (thus controlling
    generalization) with probability at least $1-\alpha=
    0.95$.} \label{Num-Eg-GLasso-Sim-Data}
\end{example}

\begin{example}
  \textnormal{Consider the diabetes data set from the `lars' package
    in R (see \cite{efron_least_2004}), where there are 64 predictors
    (including 10 baseline variables and other 54 possible
    interactions) and 1 response. After standardizing the variables,
    we split the entire data set of 442 observations into $n = 142$
    training samples (chosen uniformly at random) and the remaining
    $N = 300$ samples as test data for each experiment, in order to
    compute training and test mean square errors using the square-root
    LASSO algorithm with regularization parameter picked as in Section
    \ref{Sec-RWPF-Lin-Reg}.  After repeating the experiment 100 times,
    we report the average training and test errors in Table
    \ref{Tab-Diab-Data}, and compare the performance of RWPI based
    regularization parameter selection with other standard procedures
    such as OLS and square-root LASSO algorithm with regularization
    parameter chosen according to
    cross-validation. } \label{Num-Eg-GLasso-Diabetes-Data}
\end{example}

\begin{table}[h]
{\small \centering
\begin{tabular}
[c]{c|c|cccc}%
Training data & Method & Training Error & Test Error & $\ell_{1}$ loss &
$\ell_{2}$ loss\\
size, $n$ &  &  &  & $\Vert\beta- \beta_{\ast}\Vert_{1}$ & $\Vert\beta-
\beta_{\ast}\Vert_{2}$\\\hline
\multirow{4}{*}{$350$} & RWPI & $101.16 (\pm8.11) $ & $122.59 (\pm6.64) $ &
$4.08 (\pm0.69)$ & $5.23 (\pm0.76)$\\
& SQ-LASSO CV & $92.23 (\pm7.91)$ & $117.25 (\pm6.07)$ & $3.91 (\pm0.42)$ &
$5.02(\pm1.28)$\\
& OLS & $13.95 (\pm2.63)$ & $702.73 (\pm188.05)$ & $31.59 (\pm3.64)$ & $436.19
(\pm50.55)$\\\hline
\multirow{4}{*}{$700$} & RWPI & $101.81 (\pm3.01) $ & $117.96 (\pm4.80)$ &
$3.31 (\pm0.40)$ & $4.38 (\pm0.48)$\\
& SQ-LASSO CV & $99.66 (\pm4.64)$ & $115.46 (\pm4.36)$ & $2.96 (\pm0.37)$ &
$3.98(\pm0.66)$\\
& OLS & $56.82 (\pm3.94)$ & $178.44 (\pm21.74)$ & $10.99 (\pm0.57)$ & $152.04
(\pm8.25)$\\\hline
\multirow{4}{*}{$3500$} & RWPI & $102.55 (\pm2.39) $ & $108.44 (\pm2.54)$ &
$2.18 (\pm0.16)$ & $3.28 (\pm1.66)$\\
& SQ-LASSO CV & $100.74 (\pm2.35)$ & $113.83 (\pm2.33)$ & $2.66 (\pm0.14)$ &
$3.91 (\pm2.18)$\\
& OLS & $90.37 (\pm2.17)$ & $114.78 (\pm5.50)$ & $3.96 (\pm0.20)$ & $54.67
(\pm3.09)$\\\hline
\multirow{4}{*}{$10000$} & RWPI & $102.12 (\pm8.11) $ & $105.97 (\pm0.88)$ &
$1.13 (\pm0.08)$ & $1.63 (\pm0.11)$\\
& SQ-LASSO CV & $100.69 (\pm7.91)$ & $112.82 (\pm0.71)$ & $1.15 (\pm0.07)$ &
$1.94 (\pm0.12)$\\
& OLS & $95.91 (\pm1.11)$ & $107.74 (\pm2.96)$ & $2.23 (\pm0.10)$ & $30.91
(\pm1.43)$\\
&  &  &  &  &
\end{tabular}
}\caption{ Sparse linear regression for $d = 300$ predictor variables in
  Example \ref{Num-Eg-GLasso-Sim-Data}. The training and test mean square errors
  of RWPI based square-root LASSO regularization parameter selection is compared
  with ordinary least squares estimator (written as OLS) and cross-validation
  based square-root LASSO estimator (written as SQ-LASSO CV)}%
\label{Tab-sparseRegressionResult}%
\end{table}

\begin{table}[h]
\centering
{\small
\begin{tabular}
[c]{c|c|cccc}%
Training data & Method & Training Error & Test Error & $\ell_{1}$ loss &
$\ell_{2}$ loss\\
size, $n$ &  &  &  & $\Vert\beta- \beta_{\ast}\Vert_{1}$ & $\Vert\beta-
\beta_{\ast}\Vert_{2}$\\\hline
\multirow{4}{*}{$350$} & RWPI & $108.05 (\pm8.38) $ & $109.46 (\pm4.68) $ &
$4.02 (\pm0.71)$ & $4.08 (\pm0.70)$\\
& SQ-LASSO CV & $93.17 (\pm10.83)$ & $104.51 (\pm4.76)$ & $2.23 (\pm0.38)$ &
$6.89 (\pm2.35)$\\
& OLS & $-$ & $-$ & $-$ & $-$\\\hline
\multirow{4}{*}{$700$} & RWPI & $104.33 (\pm5.03) $ & $103.18 (\pm2.14) $ &
$2.91 (\pm0.42)$ & $2.99 (\pm0.43)$\\
& SQ-LASSO CV & $100.50 (\pm4.70)$ & $99.92 (\pm2.18)$ & $1.45 (\pm0.28)$ &
$2.82 (\pm0.64)$\\
& OLS & $14.27 (\pm2.02)$ & $699.06 (\pm137.45)$ & $31.66 (\pm2.21)$ & $518.02
(\pm44.87)$\\\hline
\multirow{4}{*}{$3500$} & RWPI & $101.52 (\pm2.52) $ & $96.38 (\pm0.80) $ &
$1.23 (\pm0.24)$ & $1.32 (\pm0.24)$\\
& SQ-LASSO CV & $102.58 (\pm2.49)$ & $98.55 (\pm0.94)$ & $1.18 (\pm0.15)$ &
$1.94 (\pm0.24)$\\
& OLS & $82.22 (\pm2.31)$ & $102.01 (\pm6.14)$ & $6.76 (\pm0.23)$ & $114.05
(\pm5.73)$\\\hline
\multirow{4}{*}{$10000$} & RWPI & $101.36 (\pm1.11) $ & $94.86 (\pm0.36)$ &
$0.75 (\pm0.13)$ & $0.81 (\pm0.14)$\\
& SQ-LASSO CV & $103.00 (\pm1.11)$ & $98.55 (\pm0.49)$ & $1.16 (\pm0.08)$ &
$1.94 (\pm0.13)$\\
& OLS & $95.11 (\pm1.10)$ & $99.53 (\pm4.83)$ & $3.26 (\pm0.11)$ & $63.67
(\pm2.16)$\\
&  &  &  &  &
\end{tabular}
}\caption{ Sparse linear regression for $d = 600$ predictor variables in
  Example \ref{Num-Eg-GLasso-Sim-Data}. The training and test mean square errors
  of RWPI based square-root LASSO regularization parameter selection is compared
  with ordinary least squares estimator (written as OLS) and cross-validation
  based square-root LASSO estimator (written as SQ-LASSO CV). As $n < d$ when $n
  = 350,$ OLS estimation is not applicable in that case (denoted by a blank)}%
\label{Tab-sparseRegressionResult-II}%
\end{table}

\begin{table}[h]
\centering
\par%
\begin{tabular}
[c]{c|p{1.2cm}p{1.0cm}p{1.1cm}p{1.1cm}}%
No. of predictors & Training & sample & size & \\
$d$ & 350 & 700 & 3500 & 10000\\\hline
300 & 0.974 & 0.977 & 0.975 & 0.969\\
600 & 0.963 & 0.966 & 0.970 & 0.968\\
&  &  &  &
\end{tabular}
\caption{Coverage Probability of empirical worst case expected loss in Example
\ref{Num-Eg-GLasso-Sim-Data}}%
\label{Tab-Emp-Cov-Prob}%
\end{table}

\begin{table}[h]
\centering
\begin{tabular}
[c]{c|cc}
& Training Error & Testing Error\\\hline
RWPI & $0.58 (\pm0.05) $ & $0.60 (\pm0.04)$\\
SQ-LASSO CV & $0.44 (\pm0.06)$ & $0.57 (\pm0.03)$\\
OLS & $0.26 (\pm0.05)$ & $1.38 (\pm0.68)$\\
&  &
\end{tabular}
\caption{Linear Regression for Diabetes data in Example
  \ref{Num-Eg-GLasso-Diabetes-Data} with 142 training samples and 300 test
  samples. The training and test mean square errors of RWPI based square-root
  LASSO regularization parameter selection is compared with ordinary least
  squares estimator (written as OLS) and cross-validation based square-root
  LASSO estimator (written as SQ-LASSO CV).}
\label{Tab-Diab-Data} 
\end{table}

\section{Conclusions}
We showed that popular machine learning estimators such as square-root
LASSO, regularized logistic regression, support vector machines,
etc. can be recast as particular examples of optimal transport based
DRO formulation in \eqref{RWPI-DRO}. We introduced Robust Wasserstein
Profile function and utilized its behaviour at the optimal parameter
$\beta_\ast$ to present a criterion for choosing the radius, $\delta,$
in the DRO formulation \eqref{RWPI-DRO}. We illustrated how this
translates to choosing regularization parameters and coverage
guarantees for optimal risk in the settings of $\ell_p-$norm
regularized linear and logistic regression. We observe that the
proposed prescriptions of the radius $\delta$ for the DRO formulation
\eqref{RWPI-DRO} result in similar prescriptions that arise from
independent considerations in Statistics literature. This indeed
strengthens the Wasserstein Profile function based approach towards
choosing the radius, $\delta,$ for the DRO formulation
\eqref{RWPI-DRO}.

% A logical step is to investigate the behaviour of the profile function
% $R_n(\theta)$ in the vicinity of the optimal parameter $\theta_\ast.$
Following the results presented in this paper, we investigate the
behaviour of the profile function $R_n(\theta)$ in the vicinity of the
optimal parameter $\theta_\ast$ in \cite{Confidence_prep} and
establish a limiting relationship of the form,
$n^{\rho/2}R_n(\theta_\ast + \Delta/\sqrt{n}) \Rightarrow L(\Delta),$
for a continuous $L(\cdot).$ Such a relationship can be used to
accomplish the following tasks: 1) construct confidence intervals for
the optimal parameter $\theta_\ast,$ 2) establish error bounds for the
solution to the DRO formulation \eqref{RWPI-DRO}, and 3)
systematically establish the validity of plugging-in any consistent
estimator for $\theta_\ast$ in order to obtain an asymptotically
optimal prescription of the radius $\delta.$ Such a plug-in approach
would obviate the need to derive stochastic upper bounds, on a
case-by-case basis, as is presently required in Section
\ref{Sec-ML-RWP}.

\section*{Supplementary material}
Proofs of the all the results in this article are furnished in the
supplementary material \cite{supp_RWPI} available after the References
section below.

\section*{Acknowledgements}
Support from NSF grant 1436700, NSF grant 1720451, NSF grant 1820942,
DARPA grant N660011824028 and Norges Bank are gratefully acknowledged
by J. Blanchet. 

\bibliographystyle{plain}
\bibliography{DR4}

\clearpage
\begin{center}
  \large 
  Supplementary material to the paper\\
  \textbf{Robust Wasserstein Profile Inference and Applications to
    Machine Learning}
\end{center}
\ \\ 
\appendix
This supplementary material to the paper ``Robust Wasserstein Profile
Inference and Applications to Machine Learning'' is organized as
follows: Proofs of all the main results in the paper are furnished in
Section \ref{Sec-Proofs}. As some of the main results in our paper
utilize strong duality for problems of moments, a quick introduction
to problem of moments along with a well-known strong duality result
that is useful in our context is provided in Section
\ref{AppSec-Str-Duality}. A technical result on exchange of sup and
inf in the DRO formulation \eqref{RWPI-DRO} is presented in Section
\ref{Sec-App-Sup-Inf}.  Relevant bibliography utilized in this
supplementary material is available at the end of this supplementary
material.
% %Numerical experiments that compare RWPI based
% regularization parameter selection with cross-validation based
% approach are presented in Section \ref{Sec-Num-Eg}. 

\section{Proofs of main results\label{Sec-Proofs}}
This section, comprising the proofs of the main results, is organized
as follows: Subsection \ref{Sec-ML-DRR-Proofs} is devoted to derive
the results on distributionally robust representations presented in
Section \ref{Sec-ML-DRR}. The proofs of results on coverage properties
are presented in Section
\ref{App-sec-proofs-coverage-properties}. Subsection
\ref{Sec-Quad-WPF-Proofs} contains the proofs of stochastic upper and
lower bounds (and hence weak limits) presented in Section
\ref{Sec_Quad_WPF}. Subsection \ref{Sec-RWP-Eg-Proofs} contains the
proofs of Theorems \ref{Thm-WPF} and \ref{Thm-GLM-WPF} as applications
of the stochastic upper and lower bounds presented in Section
\ref{Sec_Quad_WPF}. Some of the useful technical results that are not
central to the argument are presented in Sections
\ref{AppSec-Str-Duality} and \ref{Sec-App-Sup-Inf}.

\subsection{Proofs of the distributionally robust representations in Section 
\protect\ref{Sec-ML-DRR}\label{Sec-ML-DRR-Proofs}}

Here we provide proofs for results in Sections
\ref{Sec-Dual-Form-DRO}, \ref{Sec-ML-DRR} that recover various norm
regularized regressions as a special cases of distributionally robust
regression (Proposition %s \ref{Prop-Duality},
\ref{Thm-lp-Equiv}, Theorems
\ref{Cor-lp-good-equiv} and \ref{Cor-Classification-Equivalences}).

% \begin{proof}[\textbf{Proof of Proposition \ref{Prop-Duality}}]
%   Taking $f(x) = \ell(X$
% \end{proof}

\begin{proof}[\textbf{Proof of Proposition \protect\ref{Thm-lp-Equiv}}]
We utilize the duality result in Proposition \ref{Prop-Duality} to prove
Proposition \ref{Thm-lp-Equiv}. For brevity, let $\bar{X}_{i}=(X_{i},Y_{i})$ and 
$\bar{\beta}=(-\beta ,1).$ Then the loss function becomes $%
l(X_{i},Y_{i};\beta )=(\bar{\beta}^{T}\bar{X}_{i})^{2}.$ We first decipher
the function $\phi _{\gamma }(X_{i},Y_{i};\beta )$ defined in Proposition %
\ref{Prop-Duality}: 
\begin{equation*}
\phi _{\gamma }(X_{i},Y_{i};\beta )=\sup_{\bar{u}\in \mathbb{R}
^{d+1}}\left\{ (\bar{\beta}^{T}\bar{u})^{2}-\gamma \Vert \bar{X}_{i}-\bar{u}
\Vert _{q}^{2}.\right\}
\end{equation*}
To proceed further, we change the variable to $\Delta =\bar{u}-\bar{X}_{i},$
and apply H\"{o}lder's inequality to see that $|\bar{\beta}^{T}\Delta |\leq
\Vert \bar{\beta}\Vert _{p}\Vert \Delta \Vert _{q},$ where the equality
holds for some $\Delta \in \mathbb{R}^{d+1.}$ Therefore, 
\begin{align*}
\phi _{\gamma }(\bar{X}_{i};\beta )& =\sup_{\Delta \in \mathbb{R}^{d+1}} %
\big\{\left( \bar{\beta}^{T}\bar{X}_{i}+\bar{\beta}^{T}\Delta \right)
^{2}-\gamma \left\Vert \Delta \right\Vert _{q}^{2}\big\} \\
& =\sup_{\Delta \in \mathbb{R}^{d+1}}\left\{ \left( \bar{\beta}^{T}\bar{X}
_{i}+\text{sign}\left( \bar{\beta}^{T}\bar{X}_{i}\right) \left\vert \bar{
\beta}^{T}\Delta \right\vert \right) ^{2}-\gamma \left\Vert \Delta
\right\Vert _{q}^{2}\right\} \\
& =\sup_{\Delta \in \mathbb{R}^{d+1}}\left\{ \left( \bar{\beta}^{T}\bar{X}
_{i}+\text{sign}\left( \bar{\beta}^{T}\bar{X}_{i}\right) \left\Vert \Delta
\right\Vert _{q}\left\Vert \bar{\beta}\right\Vert _{p}\right) ^{2}-\gamma
\left\Vert \Delta \right\Vert _{q}^{2}\right\} .
\end{align*}
On expanding the squares, the above expression simplifies as below: 
\begin{align}
\phi _{\gamma }(\bar{X}_{i};\beta )& =\left( \bar{\beta}^{T}\bar{X}
_{i}\right) ^{2}+\sup_{\Delta \in \mathbb{R}^{d+1}}\left\{ -\left( \gamma
-\left\Vert \bar{\beta}\right\Vert _{p}^{2}\right) \left\Vert \Delta
\right\Vert _{q}^{2}+2\left\vert \bar{\beta}^{T}\bar{X}_{i}\right\vert
\left\Vert \bar{\beta}\right\Vert _{p}\left\Vert \Delta \right\Vert
_{q}\right\}  \notag \\
& =\left\{ 
\begin{array}{rcl}
\left( \bar{\beta}^{T}\bar{X}_{i}\right) ^{2}{\gamma }/{(\gamma -\left\Vert 
\bar{\beta}\right\Vert _{p}^{2})} & \text{ if }\gamma >\left\Vert \bar{\beta}
\right\Vert _{p}^{2}, &  \\ 
+\infty \text{ } & \text{ if }\gamma \leq \left\Vert \bar{\beta}\right\Vert
_{p}^{2}. & 
\end{array}
\right.  \label{Inter-Duality-0}
\end{align}
With this expression for $\phi _{\gamma }(X_{i},Y_{i};\beta ),$ we next
investigate the right hand side of the duality relation in Proposition \ref%
{Prop-Duality}. As $\phi _{\gamma }(x,y;\beta )=\infty $ when $\gamma \leq
\Vert \beta \Vert _{p}^{2},$ we obtain from the dual formulation in
Proposition \ref{Prop-Duality} that 
\begin{align}
\sup_{\Pr:\mathcal{D}_{c}(\Pr,\Pr_{n})\leq \delta }\E_{\Pr}\left[ l(X,Y;\beta )\right] &
=\inf_{\gamma \geq 0}\left\{ \gamma \delta +\frac{1}{n}\sum_{i=1}^{n}\phi
_{\gamma }(X_{i},Y_{i};\beta )\right\}  \notag \\
& =\inf_{\gamma > \Vert \beta \Vert _{p}^{2}}\left\{ \gamma \delta +\frac{
\gamma }{\gamma -\Vert \bar{\beta}\Vert _{p}^{2}}\frac{1}{n}\sum_{i=1}^{n}( 
\bar{\beta}^{T}\bar{X}_{i})^{2}\right\} .  \label{Inter-Duality}
\end{align}
Now, see that $\sum_{i=1}^{n}(\bar{\beta}^{T}\bar{X}_{i})^{2}/n$ is nothing
but the mean square error $MSE_{n}(\beta ).$ Next, as the right hand side of
(\ref{Inter-Duality}) is a convex function growing to $\infty $ (when $%
\gamma \rightarrow \infty $ or $\gamma \rightarrow \Vert \bar{\beta}\Vert
_{p}^{2}$ ), its global minimizer can be characterized uniquely via first
order optimality condition. This, in turn, renders the right hand side of (%
\ref{Inter-Duality}) as 
\begin{equation*}
\sup_{\Pr:\mathcal{D}_{c}(\Pr,\Pr_{n})\leq \delta }\E_{\Pr}\left[ l(X,Y;\beta )\right] =\left( 
\sqrt{MSE_{n}(\beta)}+\sqrt{\delta }\Vert \bar{\beta}\Vert _{p}\right) ^{2}.
\end{equation*}
This completes the proof of Proposition \ref{Thm-lp-Equiv}.
\end{proof}

\bigskip

\noindent \textbf{\textit{Outline of a proof of Theorem \ref{Cor-lp-good-equiv}.}}
The proof of Theorem \ref{Cor-lp-good-equiv} is essentially the same as the
proof of Proposition \ref{Thm-lp-Equiv}, except for adjusting for $\infty $ in
the definition of cost function $N_{q}((x,y),(u,v))$ when $y\neq v$ (as in
the derivation leading to $\phi _{\gamma }(X_{i},Y_{i};\beta )$ defined in (%
\ref{Adjust-Infty})). First, see that 
\begin{align*}
\phi_\gamma(X_i, Y_i;\beta) = \sup_{x^{\prime }\in \mathbb{R}^d,y^{\prime
}\in \mathbb{R}} \big\{ (y^{\prime T}x^{\prime 2 }- \gamma N_q\big(%
(x^{\prime },y^{\prime }), (X_i,Y_i)\big)\big\}.
\end{align*}
As $N_q((x^{\prime },y^{\prime }), (X_i,Y_i)) = \infty$ when $y^{\prime
}\neq Y_i,$ the supremum in the above expression is effectively over only $%
(x^{\prime },y^{\prime })$ such that $y^{\prime }= Y_i.$ As a result, we
obtain, 
\begin{align*}
\phi_\gamma(X_i, Y_i;\beta) &= \sup_{x^{\prime }\in \mathbb{R}^d} \big\{ %
(Y_i - \beta^Tx^{\prime 2 }- \gamma N_q\big((x^{\prime },Y_i), (X_i,Y_i)\big)%
\big\}. \\
&= \sup_{x^{\prime }\in \mathbb{R}^d} \big\{ (Y_i - \beta^Tx^{\prime 2 }-
\gamma \Vert x^{\prime }- X_i\Vert_q^2\big)\big\}.
\end{align*}
Now, following same lines of reasoning as in the proof of Theorem \ref%
{Thm-lp-Equiv} and the derivation leading to (\ref{Inter-Duality-0}), we
obtain 
\begin{equation*}
\phi _{\gamma }(x,y;\beta )= 
\begin{cases}
\frac{\gamma }{\gamma -\Vert \beta \Vert _{p}^{2}} & (Y_i - \beta^TX_i)^2
\quad\quad \text{ when }\lambda > \Vert {\beta}\Vert _{p} ^{2}, \\ 
& \quad\quad \quad\quad +\infty \quad\quad \text{ otherwise.}%
\end{cases}%
\end{equation*}
The rest of the proof is same as in the proof of Proposition \ref{Thm-lp-Equiv}.

\begin{proof}[\textbf{Proof of Theorem \protect\ref%
{Cor-Classification-Equivalences}}]
As in the proof of Proposition \ref{Thm-lp-Equiv}, we apply the duality
formulation in Proposition \ref{Prop-Duality} to write the worst case
expected log-exponential loss function as: 
\begin{equation*}
\sup_{\Pr:\ \mathcal{D}_{c}(\Pr,\Pr_{n})\leq \delta }\E_{\Pr}\big[l(X,Y;\beta )\big] %
=\inf_{\lambda \geq 0}\left\{ \delta \lambda +\frac{1}{n}\sum_{i=1}^{n}
\sup_{x}\left\{ \log \left( 1+\exp (-Y_{i}\beta ^{T}x)\right) -\lambda
\left\Vert x-X_{i}\right\Vert _{p}\right\} \right\} .
\end{equation*}
For each $(X_{i},Y_{i})$, following Lemma 1 in
\cite{shafieezadeh-abadeh_distributionally_2015}, we obtain  
\begin{equation*}
\sup_{x}\left\{ \log \left( 1+\exp (-Y_{i}\beta ^{T}x)\right) -\lambda
\left\Vert x-X_{i}\right\Vert _{p}\right\} =\left\{ 
\begin{array}{rcl}
\log \left( 1+\exp (-Y_{i}\beta ^{T}X_{i})\right) & \text{ if }\left\Vert
\beta \right\Vert _{q}\leq \lambda, &  \\ 
&  &  \\ 
+\infty & \text{ if }\left\Vert \beta \right\Vert _{q}>\lambda. & 
\end{array}
\right.
\end{equation*}
Then we can write the worst case expected loss function as,
\begin{align*}
\inf_{\lambda \geq 0}&\left\{ \delta \lambda +\frac{1}{n}\sum_{i=1}^{n}
\sup_{x}\left\{ \log \left( 1+\exp (-Y_{i}\beta ^{T}x)\right) -\lambda
\left\Vert x-X_{i}\right\Vert _{p}\right\} \right\} \\
&=\inf_{\lambda \geq 0}\left\{ \delta \lambda + \frac{1}{n}
\sum_{i=1}^{n}\left(\log \left(1 +\exp (-Y_{i}\beta ^{T}X_{i})\right) {1}_{
\{\lambda > \left\Vert \beta \right\Vert _{q}\} }+\infty {1}_{ \{\lambda
\leq \left\Vert \beta \right\Vert _{q}\} }\right) \right\} \\
&= \inf_{\lambda > \left\Vert \beta \right\Vert _{q}}\left\{ \delta \lambda
+ \frac{1}{n}\sum_{i=1}^{n}\log \left(1 +\exp (-Y_{i}\beta ^{T}X_{i})\right)
\right\} \\
&=\frac{1}{n}\sum_{i=1}^{n}\log \left( 1+\exp (-Y_{i}\beta ^{T}X_{i})\right)
+\delta \left\Vert \beta \right\Vert _{q},
\end{align*}
which is equivalent to regularized logistic regression in the theorem
statement.

For SVM with hinge loss function, let us apply the duality formulation
in Proposition \ref{Prop-Duality} to write the worst case expected
Hinge loss function as:
\[
\sup_{\Pr:\ D_{c}(\Pr,\Pr_{n})\leq\delta}\E_{\Pr}\big[\left(  1-Y\beta^{T}X\right)
^{+}\big]=\inf_{\lambda\geq0}\left\{  \delta\lambda+\frac{1}{n}\sum_{i=1}%
^{n}\sup_{x}\left\{  \left(  1-Y_{i}\beta^{T}x\right)  ^{+}-\lambda\left\Vert
x-X_{i}\right\Vert _{p}\right\}  \right\}  .
\]
For each $i$, let us consider the maximization problem and for
simplicity we denote $\Delta_{i}=x-X_{i}$
\begin{align*}
  &  \sup_{\Delta u_{i}}\left\{  \left(  1-Y_{i}\beta^{T}\left(  X_{i}+\Delta_{i}\right)  \right)  ^{+}-\lambda\left\Vert \Delta_{i}\right\Vert
    _{p}\right\}  \\
  &  =\sup_{\Delta_{i}}\sup_{0\leq\alpha_{i}\leq1}\left\{  \alpha_{i}\left(
    1-Y_{i}\beta^{T}\left(  X_{i}+\Delta_{i}\right)  \right)  -\lambda\left\Vert
    \Delta_{i}\right\Vert _{p}\right\}  \\
  &  =\sup_{0\leq\alpha_{i}\leq1}\sup_{\Delta_{i}}\left\{  \alpha_{i}
    Y_{i}\beta^{T}\Delta_{i}-\lambda\left\Vert \Delta_{i}\right\Vert
    _{p}+\alpha_{i}\left(  1-Y_{i}\beta^{T}X_{i}\right)  \right\}  \\
  &  =\sup_{0\leq\alpha_{i}\leq1}\sup_{\Delta_{i}}\left\{  \alpha
    _{i}\left\Vert \beta\right\Vert _{q}\left\Vert \Delta_{i}\right\Vert
    _{p}-\lambda\left\Vert \Delta_{i}\right\Vert _{p}+\alpha_{i}\left(
    1-Y_{i}\beta^{T}X_{i}\right)  \right\}  \\
  &  =\left\{
\begin{array}
[c]{rcl}%
\left(  1-Y_{i}\beta^{T}X_{i}\right)  ^{+} & \text{ if }\left\Vert
\beta\right\Vert _{q}\leq\lambda & +\infty\\
&  & \\
+\infty & \text{ if }\left\Vert \beta\right\Vert _{q}>\lambda &
\end{array}
\right.
\end{align*}
The first equality follows from the observation that
$x^{+}=\sup_{0\leq\alpha\leq1}x$; second equality is because the
function is concave in $\Delta_{i},$ linear in $\alpha;$ as $\alpha$
is in a compact set, we can apply minimax theorem to switch the order
of maxima; third equality is due to applying H\"{o}lder inequality to
the first term, and since the second term only depends on the norm of
$\Delta_{i,}$ the equality holds for this maximization problem. For
the outer minimization, it is sufficient to restrict to
$\lambda \geq\left\Vert \beta\right\Vert _{q}$. As a result, we obtain
\[
\inf_{\lambda\geq\left\Vert \beta\right\Vert _{q}}\left\{  \delta\lambda
+\frac{1}{n}\sum_{i=1}^{n}\left(  1-Y_{i}\beta^{T}X_{i}\right)  ^{+}\right\}
=\frac{1}{n}\sum_{i=1}^{n}\left(  1-Y_{i}\beta^{T}X_{i}\right)  ^{+}%
+\delta\left\Vert \beta\right\Vert _{q}.
\]
This completes the proof.
\end{proof}

\subsection{Proofs of results on coverage properties}
\label{App-sec-proofs-coverage-properties}
\begin{proof}[Proof of Proposition \ref{Prop-Gen-Bias-Control}]
  Let $\hat{\Pr}$ be a probability measure from the set, 
  \[\{ \Pr : D_c(\Pr, \Pr_n) \leq \delta, \ \E_\Pr\big[ D_\beta l(X,Y;\beta_\ast)]=
    {\bf 0}\},\] which is non-empty, because
  $\delta > R_n(\beta_\ast).$ Then,
\begin{align*}
  \inf_{\beta \in \mathbb{R}^d} 
  \sup_{\Pr : D_c(\Pr,\Pr_n) \leq \delta} \E_\Pr \left[ l(X,Y;\beta)\right] % &=
  %   \sup_{\Pr : D_c(\Pr,\Pr_n) \leq \delta} \inf_{\beta \in \mathbb{R}^d}\E_\Pr \left[ l(X,Y;\beta)\right]\\
  % &
    \geq \inf_{\beta \in \mathbb{R}^d} \E_{\hat{\Pr}} \left[ l(X,Y;\beta)
    \right] = \E_{\hat{\Pr}}\left[ l(X,Y;\beta_\ast)\right]. 
\end{align*}
Moreover, since $D_c(\cdot)$ is symmetric in its arguments, we have
$D_c(\hat{\Pr}, \Pr_n) \leq \delta.$ As a result,
\begin{align}
  \E_{\Pr_n}\left[ l(X,Y;\beta_\ast)\right] -
  \inf_\beta\sup_{\Pr\,\in\,\mathcal{U}_\delta(\Pr_n)} \E_\Pr\left[
  l(X,Y;\beta)\right] \leq \sup_{\Pr:D_c(\hat{\Pr},\Pr) \leq \delta}
  \E_\Pr\left[l(X,Y;\beta_\ast)\right] -
  \E_{\hat{\Pr}}\left[l(X,Y;\beta_\ast)\right]. 
\label{inter-bias-control}
\end{align}
On the other hand, 
\begin{align*}
  \inf_\beta\sup_{\Pr\,\in\,\mathcal{U}_\delta(\Pr_n)} \E_\Pr\left[
  l(X,Y;\beta)\right] -   \E_{\Pr_n}\left[ l(X,Y;\beta_\ast)\right]
  \leq \sup_{\Pr:D_c(\Pr_n,\Pr) \leq \delta}
  \E_\Pr\left[l(X,Y;\beta_\ast)\right] -
  \E_{\Pr_n}\left[l(X,Y;\beta_\ast)\right],   
\end{align*}
which can be bounded from above to result in the desired bound,
$C_1\delta + C_2(n) \mathbf{1}_{\rho=2}\sqrt{\delta},$ by substituting
the regularized regression estimators derived in Theorem
\ref{Cor-lp-good-equiv} (when $\rho=2$) and Theorem
\ref{Cor-Classification-Equivalences} (when $\rho=1$). Likewise,
repeating the proofs of Theorems \ref{Cor-lp-good-equiv} and
\ref{Cor-Classification-Equivalences} for the case where the baseline
distribution is set to be $\hat{\Pr}$ (instead of $\Pr_n$), we obtain for
any $\beta \in \mathbb{R}^d$ that
\begin{align*}
  \sup_{\Pr:D_c(\hat{\Pr},\Pr) \leq \delta}
  \E_\Pr\left[l(X,Y;\beta)\right] -
  \E_{\hat{\Pr}}\left[l(X,Y;\beta)\right] = \delta \Vert \beta
  \Vert_p, 
\end{align*}
for the logistic regression example in Theorem
\ref{Cor-Classification-Equivalences}; and 
\begin{align*}
  \sup_{\Pr:D_c(\hat{\Pr},\Pr) \leq \delta}  \E_\Pr\left[l(X,Y;\beta)\right] &-  \E_{\hat{\Pr}}\left[l(X,Y;\beta)\right] 
  = 2\sqrt{\delta} \Vert \beta
  \Vert_p \sqrt{\E_{\hat{\Pr}}\left[(Y-\beta^TX)^2\right]} + \delta \Vert  \beta \Vert_p^2\\
  & \leq 2\sqrt{\delta} \Vert \beta
  \Vert_p   \sqrt{\sup_{\Pr\,\in\,\mathcal{U}_\delta(\Pr_n)}\E_{\Pr}\left[(Y-\beta^TX)^2\right]}
    + \delta \Vert   \beta \Vert_p^2,\\
  &= 2\sqrt{\delta}\Vert \beta \Vert_p
    \sqrt{\E_{\Pr_n}\left[(Y-\beta^TX)^2\right]} + 3\delta \Vert
    \beta \Vert_p^2,
\end{align*}
for the linear regression example in Theorem \ref{Cor-lp-good-equiv}. 
This verifies the upper bound for
\eqref{inter-bias-control}. % This completes the proof of
% Proposition \ref{Prop-Gen-Bias-Control}.
\end{proof}

\begin{proof}[Proof of Theorem \ref{thm-Gen-err-control}] 
  Since $\delta = n^{-\rho/2}\eta$ for some $\eta \geq \eta_\alpha,$
  we have from the definition of $\eta_\alpha$ that,
  \[\lim_{n \rightarrow \infty}\Pr(R_n(\beta_\ast) > \delta) = \lim_{n
      \rightarrow \infty}\Pr(n^{\rho/2}R_n(\beta_\ast) > \eta) 
    \leq \alpha,\] as $n \rightarrow \infty.$ Then it follows from
  Proposition \ref{Prop-Gen-Bias-Control} that,
\begin{align*}
  \left\vert \E_{\Pr_n}[l(X,Y;\beta_\ast)] -
  \inf_{\beta \in \mathbb{R}^d}\sup_{\Pr
  \,\in\,\mathcal{U}_\delta(\Pr_n)} \E_\Pr\left[
  l(X,Y;\beta)\right]\right\vert \leq C_1\eta n^{-\rho/2} + C_2(n)
  \sqrt{\eta} \mathbf{1}_{\{\rho = 2\}}n^{-\rho/4},
\end{align*}
with probability greater than or equal to $1-\alpha,$ as
$n \rightarrow \infty.$ Moreover, due to Chebyshev's inequality, we
obtain, 
\[ \left\vert \E_{\Pr_n}[l(X,Y;\beta_\ast)] -
    \E_{\Pr_\ast}[l(X,Y;\beta_\ast)] \right\vert \leq
  \sqrt{\frac{\text{Var}_{\Pr_\ast}[l(X,Y;\beta_\ast)]}{\alpha n}},\]
and subsequently,
$C_2(n)/(2\Vert \beta_\ast\Vert_p) \leq
\sqrt{\E_{\Pr_\ast}[l(X,Y;\beta_\ast)]} +
(\alpha^{-1}n^{-1}\textnormal{Var}_{\Pr_\ast}[l(X,Y;\beta_\ast)])^{1/4},$
with probability 
exceeding $1-\alpha.$ Since
$\E_{\Pr_\ast}[l(X,Y;\beta_\ast)] = \inf_\beta \E_{\Pr_\ast}[l(X,Y;\beta)],$
the desired convergence in the statement of Theorem
\ref{thm-Gen-err-control} follows from triangle inequality and an
application of union bound to the above two inequalities.
\end{proof}

\subsection{Proofs of asymptotic stochastic upper and lower bounds of RWP
function in Section \protect\ref{Sec_Quad_WPF}\label{Sec-Quad-WPF-Proofs}}

We first use Proposition \ref{Prop-RWP-Duality} to derive a dual
formulation for $n^{\rho/2}R_n(\theta_\ast)$ which will be the
starting point of our analysis. Due to Assumption A2),
$\E[h(W,\theta_\ast)] = \mathbf{0}.$ Combining this observation with
the positive definiteness in Assumption A4), we have that $\mathbf{0}$
lies in the interior of convex hull of
$\{ h(u,\theta_\ast): u \in \mathbb{R}^m\}$ by using a supporting
hyperplane argument as in the proof of \cite[Proposition
8]{Confidence_prep}. Then, due to Proposition
\ref{Prop-RWP-Duality},
\begin{align*}
R_n(\theta_\ast) &= \sup_{\lambda \in \mathbb{R}^r} \left\{ -\frac{1}{n}
\sum_{i=1}^n \sup_{u \in \mathbb{R}^m} \big\{ \lambda^T h(u,\theta_\ast) -
\Vert u - W_i \Vert_q^\rho \big\}\right\}.
\end{align*}
In order to simplify the notation, throughout the rest of the proof we will
write $h\left( W_{i}\right) $ instead of $h\left( W_{i},\theta _{\ast
}\right) $ and $Dh\left( W_{i}\right) $ for $D_{w}h\left( W_{i},\theta
_{\ast }\right)$.

Letting $H_n = n^{-1/2}\sum_{i=1}^nh(W_i)$ and changing variables to $\Delta
= u - W_i,$ we obtain 
\begin{align*}
R_n(\theta_\ast) = \sup_{\lambda} \left\{ -\lambda^T \frac{H_n}{n^{1/2}} - 
\frac{1}{n} \sum_{i=1}^n \sup_{\Delta}\left\{ \lambda^T\big( h(W_i + \Delta)
- h(W_i) \big) - \Vert \Delta \Vert_q^\rho\right\}\right\}.
\end{align*}
Due to the fundamental theorem of calculus (using Assumption A3)), we
have that
\begin{equation*}
h\left( W_{i}+\Delta \right) -h\left( W_{i}\right) =\int_{0}^{1}Dh\left(
W_{i}+u\Delta \right) \Delta du.
\end{equation*}
Now, redefining $\zeta =\lambda n^{\left( \rho -1\right) /2}$ and $\Delta
=\Delta/n^{1/2}$ we arrive at following representation 
\begin{equation}
n^{\rho /2}R_{n}(\theta _{\ast })=\sup_{\zeta }\left\{ -\zeta
^{T}H_{n}-M_{n}\left( \zeta \right) \right\},  \label{Scaled-RWP-Dual-Rep}
\end{equation}
where 
\begin{equation}
M_{n}\left( \zeta \right) =\frac{1}{n}\sum_{i=1}^{n}\sup_{\Delta }\left\{ {
\zeta ^{T}}\int_{0}^{1}{D}h\left( W_{i}+n^{-1/2}\Delta u\right) \Delta
du-\left\Vert \Delta \right\Vert _{q}^{\rho }\right\}.  \label{Mn-Defn}
\end{equation}
The reformulation in \eqref{Scaled-RWP-Dual-Rep} is our starting point of
the analysis.

To proceed further, we first state a result which will allow us to apply a
localization argument in the representation of $n^{\rho/2}R_{n}\left( \theta
_{\ast }\right)$ in \eqref{Scaled-RWP-Dual-Rep}. Recall the definition of $%
M_n$ above in \eqref{Mn-Defn} and that $H_n = n^{-1/2}\sum_{i=1}^nh(W_i).$

\begin{lemma}
\label{Lem_Claim_1}Suppose that the Assumptions A2) to A4) are in force.
Then, for every $\varepsilon >0$, there exists $n_{0}>0$ and $b\in \left(
0,\infty \right) $ such that 
\begin{equation*}
\Pr\left( \sup_{\left\Vert \zeta \right\Vert _{p}\geq b}\left\{ -\zeta
^{T}H_{n}-M_{n}\left( \zeta \right) \right\} >0\right) \leq \varepsilon ,
\end{equation*}
for all $n\geq n_{0}$.
\end{lemma}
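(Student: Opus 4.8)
The plan is to exploit the concavity of the dual objective to reduce the claim from the unbounded region $\{\|\zeta\|_p \ge b\}$ to the compact sphere $\{\|\zeta\|_p = b\}$, and then to show that on this sphere the penalty term $M_n(\zeta)$ dominates the linear term $-\zeta^T H_n$ once $b$ is large. Write $G_n(\zeta) = -\zeta^T H_n - M_n(\zeta)$, starting from the representation \eqref{Scaled-RWP-Dual-Rep}--\eqref{Mn-Defn}. For each fixed $\Delta$ the map $\zeta \mapsto \zeta^T\int_0^1 Dh(W_i + n^{-1/2}\Delta u)\Delta\,du - \|\Delta\|_q^\rho$ is affine, so $M_n$ is convex and $G_n$ is concave; moreover $M_n(0)=0$ (the inner maximum is attained at $\Delta=0$), whence $G_n(0)=0$. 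Restricting $G_n$ to a ray $t \mapsto G_n(t\hat\zeta)$ with $\|\hat\zeta\|_p = 1$ gives a concave function vanishing at $t=0$, and writing $b=(1-b/t)\cdot 0+(b/t)\cdot t$ for $t>b$ yields $G_n(t\hat\zeta) \le (t/b)\,G_n(b\hat\zeta)$. Hence $G_n \le 0$ on all of $\{\|\zeta\|_p \ge b\}$ as soon as $G_n \le 0$ on $\{\|\zeta\|_p = b\}$, so it suffices to bound $P\big(\max_{\|\zeta\|_p = b}G_n(\zeta) > 0\big)$.

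Next I would control the two competing terms on the sphere. For the linear part, H\"older's inequality gives $-\zeta^T H_n \le \|\zeta\|_p\|H_n\|_q = b\|H_n\|_q$; since $H_n \Rightarrow \mathcal{N}(\mathbf{0}, \mathrm{Cov}[h(W,\theta_\ast)])$ by the CLT (using $E\|h\|_2^2<\infty$ from A2), the family $\{H_n\}$ is tight and I can fix $C_\varepsilon$ with $P(\|H_n\|_q > C_\varepsilon) \le \varepsilon/2$ for all large $n$. For the penalty, I would produce a feasible $\Delta$ in each summand of $M_n$: choosing a unit $\ell_q$ vector $e_i = e_i(\zeta)$ dual to $\zeta^T Dh(W_i)$, so that $\zeta^T Dh(W_i)e_i = \|\zeta^T Dh(W_i)\|_p$, and taking $\Delta = T e_i$ with a constant $T$ to be fixed, gives
\begin{equation*}
M_n(\zeta) \ge \frac{T}{n}\sum_{i=1}^n \|\zeta^T Dh(W_i)\|_p - T^\rho - Tb\cdot\frac1n\sum_{i=1}^n \omega_i(n^{-1/2}T),
\end{equation*}
where $\omega_i(\delta) = \sup_{\|z\|_q \le \delta}\|Dh(W_i + z) - Dh(W_i)\|_{\mathrm{op}}$ records the error from replacing $\int_0^1 Dh(W_i + n^{-1/2}\Delta u)\,du$ by $Dh(W_i)$.

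The heart of the argument is a uniform (over the compact sphere) lower bound on $n^{-1}\sum_i \|\zeta^T Dh(W_i)\|_p$. Assumption A4 ensures $E\|\hat\zeta^T Dh(W)\|_p > 0$ for every unit $\hat\zeta$, and since $\hat\zeta \mapsto E\|\hat\zeta^T Dh(W)\|_p$ is lower semicontinuous on the compact unit sphere (Fatou), its infimum $c_0$ is strictly positive. A uniform law of large numbers for the bounded class $\{\,w \mapsto \min(\|\hat\zeta^T Dh(w)\|_p, M): \|\hat\zeta\|_p = 1\,\}$ (Lipschitz in $\hat\zeta$, compact index set) then gives $n^{-1}\sum_i\|\zeta^T Dh(W_i)\|_p \ge bc_0/2$ uniformly on $\{\|\zeta\|_p = b\}$ with probability $\ge 1-\varepsilon/4$ for large $n$, after choosing the truncation level $M$ so that $E\min(\|\hat\zeta^T Dh\|_p, M) \ge 3c_0/4$. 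Feeding this in, bounding the error term by $Tbc_0/4$ (see below), and fixing $T > 4C_\varepsilon/c_0$ first and then $b$ large, the estimate becomes $M_n(\zeta) \ge Tbc_0/4 - T^\rho > bC_\varepsilon$ uniformly on the sphere, which forces $G_n(\zeta) \le bC_\varepsilon - M_n(\zeta) < 0$ there on the intersection of the good events. Note that a \emph{fixed} $T$ already suffices: super-linear growth of $M_n$ in $b$ is not needed for localization (it is only needed for the matching limit), so the same scheme covers $\rho=1$ and $\rho>1$ at once.

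The step I expect to be the main obstacle is showing the discretization error $n^{-1}\sum_i\omega_i(n^{-1/2}T)$ is negligible (say $\le c_0/4$) with high probability. For fixed $T$ the radius $n^{-1/2}T \to 0$, so continuity of $Dh$ (A3) makes each $\omega_i(n^{-1/2}T)\to 0$ almost surely; the difficulty is that this is a triangular-array average whose summands need not be uniformly integrable under A2--A4 alone. I would handle it by truncating the sample space: on $\{\|W_i\| \le A\}$ the uniform continuity of $Dh$ on the compact ball of radius $A+1$ yields $\omega_i(n^{-1/2}T) \le \omega_A(n^{-1/2}T) \to 0$ \emph{deterministically}, while the contribution of $\{\|W_i\| > A\}$ is made small by choosing $A$ large and then $n$ large. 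This truncation, coupled with the uniform law of large numbers, is the technically delicate part; everything else reduces to the concavity reduction and the elementary balance between the $O(b)$ linear term and the $O(b)$ penalty term.
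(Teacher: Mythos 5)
Your proposal is essentially correct and, at its core, follows the same route as the paper: control the linear term by H\"older's inequality plus tightness of $H_n$ (CLT under A2), lower-bound $M_n$ by plugging in the $\ell_q$-norming perturbation dual to $\zeta^T Dh(W_i)$, get a uniform-over-the-sphere positive lower bound on the empirical average of $\|\bar\zeta^T Dh(W_i)\|_p$ from A4 via Fatou/lower semicontinuity and a law of large numbers, and absorb the integral-versus-$Dh(W_i)$ discrepancy by uniform continuity on a compact truncation set (the paper's $C_0=\{\|W_i\|_p\le c_0\}$). Two elements of yours are genuinely different and worth noting. First, the concavity reduction: since $M_n$ is a supremum of affine functions of $\zeta$ with $M_n(0)=0$, the objective $G_n$ is concave with $G_n(0)=0$, so $G_n(t\hat\zeta)\le (t/b)G_n(b\hat\zeta)$ for $t>b$ and it suffices to work on the compact sphere $\{\|\zeta\|_p=b\}$; the paper instead runs bounds linear in $\|\zeta\|_p$ over the whole region $\{\|\zeta\|_p>b\}$, which amounts to the same scaling but is less transparent. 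Second, your observation that a \emph{fixed} perturbation size $T$ suffices lets one scheme cover $\rho=1$ and $\rho>1$ simultaneously, whereas the paper treats $\rho=1$ separately with the $n$-dependent choice $c=n^{1/2-\varepsilon'}$ and a compact set on which $\min_{\|\zeta\|_p=1}\|\zeta^T Dh(w+v)\|_q\ge\delta'$. Your version is arguably cleaner on both counts.

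There is, however, one step that would fail as literally written and needs the repair the paper itself uses. Your displayed lower bound on $M_n$ carries the error average $n^{-1}\sum_{i=1}^n \omega_i(n^{-1/2}T)$ over \emph{all} indices, and your proposed fix --- ``the contribution of $\{\|W_i\|>A\}$ is made small by choosing $A$ large and then $n$ large'' --- does not work: under A2)--A4) the modulus $\omega_i$ on the tail event is neither bounded nor integrable (A3 gives continuity of $Dh$ only), so a vanishing fraction of indices times an uncontrolled magnitude cannot be bounded. The correct move is not to bound the tail terms but to avoid them: since the inner supremum defining $M_n$ is always $\ge 0$ (take $\Delta=0$), use the norming perturbation $\Delta=Te_i(\zeta)$ only on $\{\|W_i\|\le A\}$ and $\Delta=0$ otherwise --- precisely the paper's device of inserting the indicator $I(W_i\in C_0)$ (and a positive part) into the lower bound. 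This drops both the main and error terms on the tail, which is harmless because the doubly truncated functional $E\left[\min\left(\|\bar\zeta^T Dh(W)\|_p,M\right) I\left(\|W\|\le A\right)\right]$ is still bounded below by a positive constant uniformly in $\|\bar\zeta\|_p=1$, by A4, monotone convergence in $A$ and $M$, lower semicontinuity via Fatou, and compactness of the sphere. The same truncation rescues your parenthetical ``Lipschitz in $\hat\zeta$'' justification of the uniform LLN: the Lipschitz constant is of order $\|Dh(w)\|$, which need not be integrable, but on $\{\|W_i\|\le A\}$ it is bounded (alternatively, a covering argument with oscillation envelope $E\left[\min\left(\eta\|Dh(W)\|,2M\right)\right]\to 0$ as $\eta\to 0$, by dominated convergence, works without $w$-truncation). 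With these repairs, which cost nothing in your scheme, the proof is complete.
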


\begin{proof}[Proof of Lemma \protect\ref{Lem_Claim_1}]
  Recall that $q > 1$ and $p = q/(q-1).$ For $\zeta \neq 0$, we write
  $\bar{\zeta}=\zeta /\left\Vert \zeta \right\Vert _{p}$. Let us
  define the vector
  $V_{i}\left( \bar{\zeta}\right) ={D}h\left( W_{i}\right) ^{T}\bar{%
    \zeta}$, and put
\begin{equation}
\Delta _{i}^{\prime }=\Delta _{i}^{\prime }\left( \bar{\zeta}\right)
=\left\vert V_{i}\left( \bar{\zeta}\right) \right\vert ^{p/q}sgn\left(
V_{i}\left( \bar{\zeta}\right) \right) .  \label{Def_Del_p}
\end{equation}%
Define the set
$C_{0}=\{w \in \mathbb{R}^m: \left\Vert w\right\Vert _{p}\leq
c_{0}\}$, where $c_{0}$ will be chosen large enough momentarily. Then,
for any $c>0$, plugging in $\Delta =c\Delta _{i}^{\prime }$, we have
$\zeta^T Dh(W_i) \Delta = c\Vert \zeta^T Dh(W_i)\Vert_p\Vert
\Delta_i^{\prime }\Vert_q,$ and therefore,
\begin{align}
  &\sup_{\Delta }\left\{ {\zeta ^{T}}\int_{0}^{1}{D}h(
    W_{i}+n^{-1/2}\Delta u) \Delta du -\left\Vert \Delta \right\Vert
    _{q}^{\rho }\right\}  \notag\\
  &\quad=\sup_{\Delta} \left\{ \zeta^TDh(W_i)\Delta - \Vert \Delta
    \Vert_q^\rho + {\zeta ^{T}}\int_{0}^{1} \left[{D}h(
    W_{i}+n^{-1/2}\Delta u) - Dh(W_i) \right] \Delta du \right\}
    \notag\\ 
  &\quad\geq \max \bigg\{c\left\Vert {\zeta ^{T}D}h( W_{i})
    \right\Vert _{p}\left\Vert \Delta _{i}^{\prime }\right\Vert _{q}-c^{\rho
    }\left\Vert \Delta _{i}^{\prime }\right\Vert _{q}^{\rho }  \notag \\
  &\quad\quad \quad\quad\quad\quad +c{\zeta^{T}}\int_{0}^{1} \left[{D}h(
    W_{i}+cn^{-1/2}\Delta _{i}^{\prime }u) -{D}h\left( W_{i}\right) \right]\Delta
    _{i}^{\prime }du,0 \bigg\}I\left( W_{i}\in C_{0}\right) .  \label{e1aa}
\end{align}
% \begin{align}
% &\max_{\Delta }\left\{ {\zeta ^{T}}\int_{0}^{1}{D}h\left(
% W_{i}+n^{-1/2}\Delta u\right) \Delta -\left\Vert \Delta \right\Vert
% _{q}^{\rho }\right\}  \notag \\
% &\quad\geq \max \bigg\{c\left\Vert {\zeta ^{T}D}h\left( W_{i}\right)
% \right\Vert _{p}\left\Vert \Delta _{i}^{\prime }\right\Vert _{q}-c^{\rho
% }\left\Vert \Delta _{i}^{\prime }\right\Vert _{q}^{\rho }  \notag \\
% &\quad\quad \quad\quad +c{\zeta^{T}}\int_{0}^{1}[{D}h\left(
% W_{i}+n^{-1/2}\Delta _{i}^{\prime }u\right) -{D}h\left( W_{i}\right) ]\Delta
% _{i}^{\prime },0 \bigg\}I\left( W_{i}\in C_{0}\right) .  \label{e1aa}
% \end{align}
Due to H\"{o}lder's inequality, 
\begin{align*}
  &I\left( W_{i}\in C_{0}\right) \left\vert {\zeta ^{T}}\int_{0}^{1} \left[{D}
    h( W_{i}+cn^{-1/2}\Delta _{i}^{\prime }u) -{D}h( W_{i})
    \right]\Delta _{i}^{\prime }du\right\vert \\
  &\quad\quad \leq I\left(W_{i}\in C_{0}\right) \left\Vert \zeta
    \right\Vert _{p} \int_{0}^{1} \left\Vert \big[{D}h( W_{i}+cn^{-1/2}\Delta
    _{i}^{\prime }u) -{D}h( W_{i}) \big]\Delta _{i}^{\prime
    }\right\Vert _{q}du.
\end{align*}
Because of continuity ${D}h\left( \cdot \right) $ and the fact that $%
W_{i}\in C_{0}$
(so the integrand is bounded), we have that the previous expression
converges to zero as $n\rightarrow \infty.$ Therefore, for given
positive constants $\varepsilon^{\prime },c$ (note than convergence is
uniform on $W_{i}\in C_{0}$), there exists $n_{0}$ such that for all
$n\geq n_{0}$
\begin{equation}
  cI\left( W_{i}\in C_{0}\right) \left\vert {\zeta ^{T}}\int_{0}^{1} \left[{D}
      h( W_{i}+cn^{-1/2}\Delta _{i}^{\prime }u) -{D}h( W_{i})
    \right]\Delta _{i}^{\prime }du\right\vert \leq
  c\varepsilon^{\prime }\left\Vert  \zeta \right\Vert
  _{p}.  \label{e1_b} 
\end{equation}
Next, as
$\Vert \bar{\zeta}^T Dh(W_i)\Vert_p^{p/q} = \Vert
\Delta_i^{\prime}\Vert_q$ and $1 + p/q = p,$
\begin{eqnarray*}
c\left\Vert {\zeta ^{T}D}h\left( W_{i}\right) \right\Vert _{p}\left\Vert
\Delta _{i}^{\prime }\right\Vert _{q}-c^{\rho }\left\Vert \Delta
_{i}^{\prime }\right\Vert _{q}^{\rho } =c\left\Vert {\zeta }%
\right\Vert_{p}\Vert \bar{\zeta}^T Dh(W_i)\Vert_p^p - c^{\rho
}\Vert \bar{\zeta}^T Dh(W_i)\Vert_p^{\rho \frac{p}{q}}.
\end{eqnarray*}
Consequently, it follows from \eqref{e1aa} and \eqref{e1_b} that 
\begin{align}
  M_n(\zeta) \geq \frac{1}{n} \sum_{i=1}^n \bigg\{ c\left\Vert {\zeta }%
\right\Vert_{p}\Vert \bar{\zeta}^T Dh(W_i)\Vert_p^p - c^{\rho
}\Vert \bar{\zeta}^T Dh(W_i)\Vert_p^{\rho \frac{p}{q}} - c\varepsilon' \Vert \zeta \Vert_p \bigg\} I\left( W_i
  \in C_0\right). 
\label{Inter-e1ab}
\end{align}
Now, since the map
$\bar{\zeta} \hookrightarrow \left\Vert \bar{\zeta}^T Dh(W_i)
\right\Vert_{p}^{p}$
is Lipschitz continuous on $\left\Vert \bar{\zeta}\right\Vert _{p}=1,$
we conclude that,
\begin{align}
\frac{1}{n} \sum_{i=1}^n\left\Vert \bar{\zeta}^T Dh(W_i) \right\Vert
_{p}^{p} I\left( W_{i}\in C_{0}\right) \rightarrow &\E\left[
\left \Vert \bar{\zeta}^T {D}h\left( W\right) \right\Vert
_{p}^{p}I\left( W\in C_{0}\right) \right],
\label{Lem2-Inter}
\end{align}
with probability one as $n\rightarrow \infty $. Moreover, due to Fatou's
lemma we have that the map $\bar{\zeta}\hookrightarrow \Pr\left( \left\Vert 
\bar{\zeta}^{T}{D}h\left( W\right) \right\Vert _{p}>0\right) $ is lower
semi-continuous. Therefore, by A4), we have that there exists $\delta >0$
such that 
\begin{equation}
  \inf_{\bar{\zeta}}\E \left\Vert \bar{\zeta}^{T}{D}h\left( W\right)
  \right\Vert _{p}^p >\delta .  \label{LB_T1}
\end{equation}%
Consecutively, by selecting $c_{0}>0$ large enough, we conclude from
\eqref{Lem2-Inter} that for $n\geq N^{\prime }\left( \delta \right) $,
\begin{equation}
\frac{1}{n}\sum_{i=1}^{n} \left\Vert \bar{\zeta}^T Dh(W_i) \right\Vert
_{p}^{p} I\left( W_{i}\in C_{0}\right) >\frac{\delta }{2}.
\label{LB_T2}
\end{equation}
% Similarly, the map $\bar{\zeta}\hookrightarrow \left\Vert \Delta
% _{i}^{\prime }\left( \bar{\zeta}\right) \right\Vert _{q}^{\rho }$ is
% Lipschitz continuous on $\left\Vert \bar{\zeta}\right\Vert _{q}=1$,
% therefore one might assume that $N^{\prime }\left( \delta \right) $ is
% actually chosen so that for $n\geq N^{\prime }\left( \delta \right),$ 
% \begin{equation*}
% \frac{1}{n}\sum_{i=1}^{n}\left\Vert \Delta _{i}^{\prime }\left( \bar{\zeta}%
% \right) \right\Vert _{q}^{\rho }I\left( W_{i}\in C_{0}\right) <2\E\left[
% \left\Vert \Delta _{i}^{\prime }\left( \bar{\zeta}\right) \right\Vert
% _{q}^{\rho }I\left( W_{i}\in C_{0}\right) \right].
% \end{equation*}
Further, % as
% $\Vert \Delta_i^\prime(\bar{\zeta})\Vert_q = \Vert \bar{\zeta}^T
% Dh(W_i)\Vert_p^{p/q},$
if we let
$c_1 := \sup_{w \in C_0} \Vert \bar{\zeta}^TDh(w) \Vert_p^{p/q} <
\infty,$ then
\begin{equation*}
  \frac{1}{n}\sum_{i=1}^{n}\left\Vert \bar{\zeta}^TDh(W_i)
  \right\Vert_{p}^{\rho \frac{p}{q}}I\left( W_{i}\in C_{0}\right) < c_1^{\rho},
\end{equation*}
for all $n > N^\prime(\delta).$ As a consequence, if
$n\geq N^{\prime }\left( \delta \right) $, it follows from
\eqref{Inter-e1ab} and \eqref{LB_T2} that
\begin{align*}
  \sup_{\left\Vert \zeta \right\Vert _{p}>b}\left\{ -\zeta
  ^{T}H_{n}-M_{n}\left( \zeta \right) \right\} 
  &\leq \sup_{\left\Vert \zeta \right\Vert _{p}>b}\left\{ -\zeta
    ^{T}H_{n}- \left( \frac{c \delta\Vert \zeta \Vert_p}{2} -
    (cc_1)^\rho - c \varepsilon'\Vert \zeta \Vert_p\right) \right\} \\
  &\leq \sup_{\left\Vert \zeta \right\Vert _{p}>b}\left\{ -\zeta
    ^{T}H_{n}-\left\Vert \zeta \right\Vert _{p}\left\{ c
    \left(\frac{\delta }{2} - \varepsilon' \right)-\frac{
    (cc_1)^\rho}{b}\right\} \right\}.
\end{align*}
Consequently, on the set
$\left\Vert H_{n}\right\Vert _{q}\leq b^{\prime }$, we obtain
\begin{equation*}
  \sup_{\left\Vert \zeta \right\Vert _{p}>b}\left\{ -\zeta
    ^{T}H_{n}-M_{n}\left( \zeta \right) \right\} \leq \sup_{\left\Vert \zeta
    \right\Vert _{p}>b}\left\Vert \zeta \right\Vert _{p}\left[ b^{\prime
    }- \left\{ c
      \left(\frac{\delta }{2} - \varepsilon' \right)-\frac{
        (cc_1)^\rho}{b}\right\}\right] .
\end{equation*}
Now, if we take $c > 4(b^\prime + 1)/\delta,$
$\varepsilon^\prime = \delta/4$ and $b$ to be large enough such that
$b > (c c_1)^\rho$ then
\begin{align*}
  b^{\prime }- \left\{ c\left(\frac{\delta }{2} - \varepsilon'
  \right)-\frac{(cc_1)^\rho}{b}\right\} < 0.
\end{align*}
Therefore, if $n\geq n_{0}$ (see \eqref{e1_b}), then
\begin{eqnarray*}
\Pr\left( \max_{\left\Vert \zeta \right\Vert _{p}>b}\left\{ -\zeta
^{T}H_{n}-M_{n}\left( \zeta \right) \right\} >0\right) \leq \Pr\left(
\left\Vert H_{n}\right\Vert _{q}>b^{\prime }\right) +\Pr\left( N^{\prime
}\left( \delta \right) >n\right) .
\end{eqnarray*}%
The result now follows immediately from the previous inequality by
choosing $%
b^{\prime }$
large enough so that
$\Pr( \Vert H_{n}\Vert _{q}>b^{\prime }) \leq \varepsilon /2$ and later
$n_{0}$ so that $\Pr( N^{\prime }( \delta ) >n_{0}) \leq \varepsilon
/2$. The selection 
of $b^{\prime }$ is feasible due to A2). This proves the statement of
Lemma \ref{Lem_Claim_1}. % when $\rho > 1.$
\end{proof}

\bigskip

\begin{lemma}
\label{Lem_Claim_2}For any $b>0$ and $c_{0}\in \left(0,\infty \right),$
\begin{eqnarray*}
\frac{1}{n}\sum_{i=1}^{n}\left\Vert {\zeta }^{T}{D}h\left( W_{i}\right)
\right\Vert _{p}^{\rho /(\rho -1)}I\big( \left\Vert W_{i}
\right\Vert _{p}\leq c_{0}\big)
\rightarrow \E\left[ \left\Vert {\zeta }^{T}{D}h\left( W\right)
\right\Vert _{p}^{\rho /(\rho -1)}I( \left\Vert  W
\right\Vert _{p}\leq c_{0}) \right],
\end{eqnarray*}
uniformly over $\left\Vert \zeta \right\Vert _{p}\leq b$ in
probability as $n\rightarrow \infty.$
\end{lemma}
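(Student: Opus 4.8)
The plan is to treat this as a uniform (weak) law of large numbers over the compact parameter set $\{\zeta : \|\zeta\|_p \le b\}$, established by the classical route of combining pointwise convergence with stochastic equicontinuity. I would write $g_\zeta(w) = \|\zeta^T Dh(w)\|_q^{\rho/(\rho-1)}\, I(\|Dh(w)\|_q \le c_0)$ and observe at the outset that the truncating indicator is exactly what makes the argument go through: on $\{\|Dh(w)\|_q \le c_0\}$, the equivalence of norms on finite-dimensional spaces (equivalently, an operator-norm bound) gives $\|\zeta^T Dh(w)\|_q \le \kappa_0 c_0 \|\zeta\|_p \le \kappa_0 c_0 b =: M$ for a constant $\kappa_0$ depending only on the dimensions and $p,q$. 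Hence $0 \le g_\zeta(w) \le M^{\rho/(\rho-1)}$ uniformly in $w$ and in $\|\zeta\|_p \le b$; the envelope is a deterministic constant, so no moment hypotheses on $W$ are needed and $E[g_\zeta(W)]$ is finite for every such $\zeta$. For each fixed $\zeta$, Assumption A3) renders $g_\zeta$ measurable, and the ordinary law of large numbers gives $n^{-1}\sum_i g_\zeta(W_i) \to E[g_\zeta(W)]$ in probability.

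The technical heart is equicontinuity in $\zeta$, uniform in $w$. First, whenever $\|Dh(w)\|_q \le c_0$, $\big| \|\zeta_1^T Dh(w)\|_q - \|\zeta_2^T Dh(w)\|_q \big| \le \|(\zeta_1-\zeta_2)^T Dh(w)\|_q \le \kappa_0 c_0 \|\zeta_1-\zeta_2\|_p$. Second, the map $t \mapsto t^{\rho/(\rho-1)}$ (recall $\rho > 1$, so this exponent is finite) is Lipschitz on the bounded interval $[0,M]$ with constant $\frac{\rho}{\rho-1}M^{1/(\rho-1)}$. Composing these, and using that $I(\|Dh(w)\|_q \le c_0)$ depends on $w$ alone—so it is constant in $\zeta$ and forces $g_\zeta \equiv 0$ off the truncation set—I obtain a deterministic Lipschitz constant $L = L(b,c_0,\rho)$, independent of $w$ and $n$, with $|g_{\zeta_1}(w) - g_{\zeta_2}(w)| \le L \|\zeta_1-\zeta_2\|_p$ for all $w$ and all $\|\zeta_1\|_p,\|\zeta_2\|_p \le b$. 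Consequently both $\zeta \mapsto n^{-1}\sum_i g_\zeta(W_i)$ and $\zeta \mapsto E[g_\zeta(W)]$ are $L$-Lipschitz on the ball.

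To conclude, I would fix $\varepsilon > 0$ and cover the compact ball $\{\|\zeta\|_p \le b\}$ by finitely many $\|\cdot\|_p$-balls of radius $\delta = \varepsilon/(3L)$ centered at $\zeta_1,\dots,\zeta_K$. For $\zeta$ in the $j$-th ball the Lipschitz bound gives $|n^{-1}\sum_i g_\zeta(W_i) - E[g_\zeta(W)]| \le 2L\delta + \Delta_n^{(j)}$, where $\Delta_n^{(j)} := |n^{-1}\sum_i g_{\zeta_j}(W_i) - E[g_{\zeta_j}(W)]|$. Taking the supremum over the ball yields $\sup_{\|\zeta\|_p \le b} |n^{-1}\sum_i g_\zeta(W_i) - E[g_\zeta(W)]| \le 2\varepsilon/3 + \max_{1 \le j \le K} \Delta_n^{(j)}$. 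Since each $\Delta_n^{(j)} \to 0$ in probability and $K$ is finite, a union bound makes $\max_j \Delta_n^{(j)} < \varepsilon/3$ with probability tending to one, giving the claimed uniform convergence in probability. The one place demanding care is the uniform-in-$w$ Lipschitz constant of the second paragraph; that is precisely what truncation at $c_0$ secures, and the rest is a routine compactness-plus-LLN argument invoking nothing beyond A2)--A4).
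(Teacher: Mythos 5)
Your proof is correct, and its key estimate is the same as the paper's: both arguments hinge on Lipschitz continuity of $\zeta \mapsto \Vert \zeta^{T} Dh(W_i)\Vert_q^{\rho/(\rho-1)}$ over the ball $\{\Vert\zeta\Vert_p \le b\}$, which the paper derives from the elementary inequality $a_1^{\gamma}-a_0^{\gamma}\le \gamma a_1^{\gamma-1}(a_1-a_0)$ with $\gamma=\rho/(\rho-1)$ — exactly your Lipschitz bound for $t\mapsto t^{\gamma}$ on a bounded interval. Where you genuinely differ is the finishing mechanism. The paper retains a \emph{random} Lipschitz constant proportional to $n^{-1}\sum_i \Vert Dh(W_i)\Vert_q^{\rho/(\rho-1)}$, uses the truncated moment bound to conclude tightness of the empirical process under the uniform topology on the compact ball, combines this with finite-dimensional convergence via the SLLN, and upgrades to convergence in probability because the limit is deterministic — a functional weak-convergence argument. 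You instead exploit the truncation more aggressively: since the indicator $I(\Vert Dh(w)\Vert_q\le c_0)$ depends on $w$ alone, norm equivalence makes both the envelope and the Lipschitz constant \emph{deterministic} (uniform in $w$ and $n$), after which a finite $\varepsilon$-net of the ball, the pointwise LLN at the net points, and a union bound close the argument. Your route is more elementary and self-contained, and it makes explicit a point the paper leaves implicit, namely that truncation at $c_0$ eliminates any moment hypotheses (indeed you need only A3 for measurability plus i.i.d.\ sampling, not A2) or A4)); the paper's tightness formulation is the version that would survive if the Lipschitz constant were genuinely random and merely integrable, as happens without the indicator. One hygiene point you handle correctly: $g_\zeta$ is discontinuous in $w$ because of the indicator, but your argument never uses continuity in $w$, only measurability and the uniform-in-$w$ Lipschitz property in $\zeta$.
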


\begin{proof}[Proof of Lemma \protect\ref{Lem_Claim_2}]
We first argue a suitable Lipschitz property for the map
  ${\zeta \hookrightarrow }\left\Vert {\zeta }^{T}{D}h\left(
      W_{i}\right) \right\Vert _{p}^{\rho /(\rho -1)}$.
  It is elementary that for any $0\leq a_{0}<a_{1}$ and $\gamma >1$
\begin{equation*}
a_{1}^{\gamma }-a_{0}^{\gamma }=\gamma \int_{a_{0}}^{a_{1}}t^{\gamma
-1}dt\leq \gamma a_{1}^{\gamma -1}\left( a_{1}-a_{0}\right) .
\end{equation*}%
Applying this observation with 
\begin{eqnarray*}
a_{1} &=&\max \left( \left\Vert {\zeta }_{1}^{T}{D}h\left( W_{i}\right)
\right\Vert_{p},\left\Vert {\zeta }_{0}^{T}{D}h\left( W_{i}\right)
\right\Vert_{p}\right) , \\
a_{0} &=&\min \left( \left\Vert {\zeta }_{1}^{T}{D}h\left( W_{i}\right)
\right\Vert_{p},\left\Vert {\zeta }_{0}^{T}{D}h\left( W_{i}\right)
\right\Vert_{p}\right) , \\
\gamma &=&\rho /(\rho-1),
\end{eqnarray*}%
and using that 
%\begin{equation*}
$\left\Vert {\zeta }^{T}{D}h\left( W_{i}\right) \right\Vert _{p}\leq
b\left\Vert {D}h\left( W_{i}\right)\right\Vert_{p}$
%\end{equation*}%
for $\left\Vert \zeta \right\Vert _{p}\leq b,$ we obtain
\begin{eqnarray*}
\left\vert \left\Vert {\zeta }_{0}^{T}{D}h\left( W_{i}\right) \right\Vert
_{p}^{\rho /(\rho -1)}-\left\Vert {\zeta }_{1}^{T}{D}h\left( W_{i}\right)
\right\Vert_{p}^{\rho /(\rho -1)}\right\vert
\leq \frac{\rho}{\rho-1} b^{1 /(\rho-1)}\left\Vert {D}h\left( W_{i}\right) \right\Vert
_{p}^{\rho /(\rho -1)}\left\Vert {\zeta }_{0}-{\zeta }_{1}\right\Vert _{p}.
\end{eqnarray*}%
% Therefore, 
% \begin{eqnarray*}
% &&\left\vert \left\Vert {\zeta }_{0}^{T}{D}h\left( W_{i}\right) \right\Vert
% _{q}^{\rho /(1-\rho )}-\left\Vert {\zeta }_{1}^{T}{D}h\left( W_{i}\right)
% \right\Vert _{q}^{\rho /(1-\rho )}\right\vert \\
% &\leq &b^{\rho /(1-\rho )}\left\Vert {\zeta }_{0}-{\zeta }_{1}\right\Vert
% _{q}\left\Vert {D}h\left( W_{i}\right) \right\Vert _{q}^{\rho /(\rho -1)}.
% \end{eqnarray*}%
Consequently, we have that 
% From this inequality we have that there exists a constant $c_{0}\left(
% b,\rho \right) \in \left( 0,\infty \right) $ 
\begin{eqnarray*}
  \left\vert \frac{1}{n}\sum_{i=1}^{n}\left\Vert {\zeta }_{0}^{T}{D}h\left(
  W_{i}\right) \right\Vert _{p}^{\frac{\rho}{\rho -1}}-\frac{1}{n}%
  \sum_{i=1}^{n}\left\Vert {\zeta }_{1}^{T}{D}h\left( W_{i}\right)
  \right \Vert_{p}^{\frac{\rho}{\rho -1}}\right\vert 
  \leq \frac{\rho}{\rho-1}\left\Vert {\zeta }_{0}-{\zeta
  }_{1}\right\Vert_{p}\frac{b^{\frac{1}{\rho-1}}}{n}\sum_{i=1}^{n}\left\Vert
  {D}h\left( W_{i}\right) \right\Vert_{p}^{\frac{\rho}{\rho -1}}.
\end{eqnarray*}
Since $Dh(\cdot)$ is continuous,
%\begin{equation*}
$\E\left[ \left\Vert {D}h\left( W\right) \right\Vert _{p}^{\rho /(\rho
    -1)}I( \left\Vert W \right\Vert _{p}\leq c_{0}) \right]
<\infty,$
%\end{equation*}
thus yielding the tightness of
\begin{align*}
  \frac{1}{n} \sum_{i=1}^n \Vert \zeta^T Dh(W_i)
  \Vert_p^{\rho/(\rho-1)}I \left(\Vert W_{i}
  \right\Vert _{p}\leq c_{0}), 
\end{align*}
under the uniform topology on compact sets. The Strong Law of Large
Numbers guarantees that finite dimensional distributions converge (for
any choice of $\zeta_1, \ldots, \zeta_k, k \geq 1$), and, since the
limit is deterministic, we obtain the desired convergence in
probability.
\end{proof}
\bigskip
\begin{proof}[\textbf{Proof of Theorem \protect\ref{Thm-WPF_RWPI}}] 
  Let us first observe that $R_n(\theta_\ast) \geq 0$ (choosing
  $\zeta =0$).  Then, as a consequence of Lemma \ref{Lem_Claim_1},
  there exists $b > 0$ such that the event 
\begin{equation}
\mathcal{A}_{n}= \left\{n^{\rho /2}R_{n}(\theta _{\ast })=\max_{\left\Vert \zeta
\right\Vert _{p}\leq b}\left\{ -\zeta ^{T}H_{n}-M_{n}\left( \zeta \right)
\right\}  \right\}, \label{Def_A_n_E}
\end{equation}
where the outer supremum is attained at some
$\Vert \zeta_\ast \Vert_p \leq b,$ occurs with probability at least
$1-\varepsilon,$ as long as $n \geq n_0.$ In other words,
$\Pr(\mathcal{A}_n) \geq 1-\varepsilon$ when $n \geq n_0.$

\textit{We first consider the case }$\rho >1$\textit{.} For
$\zeta \neq 0$, write
$\bar{\zeta}=\zeta /\left\Vert \zeta \right\Vert _{p}.$ Next, define
the vector $V_i(\bar{\zeta})$ via
$V_i\left( \bar{\zeta}\right) = {D}h\left(
  W_{i}\right)^{T}\bar{\zeta}$
(that is, the $j$-th entry of $V_i\left( \bar{\zeta}\right) $ is the
$j$-th entry of the vector ${D}h\left( W_{i}\right) ^{T}\bar{\zeta}$),
and put
\begin{equation}
\Delta _{i}^{\prime }=\Delta _{i}^{\prime }\left( \bar{\zeta}\right)
=\left\vert V_{i}\left( \bar{\zeta}\right) \right\vert ^{p/q}sgn\left(
V_{i}\left( \bar{\zeta}\right) \right) .  \label{Def_Del_p_INT}
\end{equation}
Next, let $\bar{\Delta}_{i}=c_i\Delta _{i}^{\prime }$ with
$c_i$ chosen so that
\begin{equation*}
\left\Vert \bar{\Delta}_{i}\right\Vert _{q}= \left(\frac{1}{\rho}\left\Vert \zeta^{T}{D}h\left( W_{i}\right) \right\Vert _{p}\right)^{1/(\rho -1)}.
\end{equation*}
In such case we have that
\begin{align}
\max_{\Delta }\left\{ {\zeta }^{T}{D}h\left( W_{i}\right) \Delta
-\left\Vert \Delta \right\Vert _{q}^{\rho }\right\}  
&=\max_{\left\Vert \Delta \right\Vert _{q}\geq 0}\left\{ \left\Vert {\zeta }
^{T}{D}h\left( W_{i}\right) \right\Vert _{p}\left\Vert \Delta \right\Vert
_{q}-\left\Vert \Delta \right\Vert _{q}^{\rho }\right\}  \notag \\
&={\zeta }^{T}{D}h\left( W_{i}\right) \bar{\Delta}_{i}-\left\Vert
  \bar{\Delta}_{i}\right\Vert _{q}^{\rho }  \notag \\ 
&=\left\Vert {\zeta }^{T}{D}h\left( W_{i}\right) \right\Vert _{p}^{\rho
/(\rho -1)}\left( \frac{1}{\rho }\right) ^{1/\left( \rho -1\right)
    }\left( 1-\frac{1}{\rho }\right) .  \label{E00} 
\end{align}
Pick $c_{0}\in \left( 0,\infty \right) $ and define $C_{0}=\{\left\Vert
W_{i}\right\Vert _{p}\leq c_{0}\}$. Note that 
\begin{equation*}
M_{n}\left( \zeta \right) \geq M_{n}^{\prime }\left( \zeta ,c_{0}\right) ,
\end{equation*}%
where%
\begin{equation*}
M_{n}^{\prime }\left( \zeta ,c_{0}\right) =\frac{1}{n}\sum_{i=1}^{n}I\left(
W_{i}\in C_{0}\right) \left\{ {\zeta ^{T}}\int_{0}^{1}{D}h\left(
W_{i}+n_{i}^{-1/2}\bar{\Delta}_{i}u\right) \bar{\Delta}_{i}du-\left\Vert 
\bar{\Delta}_{i}\right\Vert _{q}^{\rho }\right\} ^{+}.
\end{equation*}%
Therefore 
\begin{equation}
\max_{\left\Vert \zeta \right\Vert _{p}\leq b}\left\{ -\zeta
^{T}H_{n}-M_{n}\left( \zeta \right) \right\} \leq \max_{\left\Vert \zeta
\right\Vert _{p}\leq b}\left\{ -\zeta ^{T}H_{n}-M_{n}^{\prime }\left( \zeta
,c_{0}\right) \right\} .  \label{B_N_BC_2}
\end{equation}
Define
\begin{eqnarray*}
\widehat{M}_{n}\left( \zeta ,c_{0}\right) &=&\frac{1}{n}\sum_{i=1}^{n}I%
\left( W_{i}\in C_{0}\right) \left\{ {\zeta ^{T}D}h\left( W_{i}\right) \bar{%
\Delta}_{i}du-\left\Vert \bar{\Delta}_{i}\right\Vert _{q}^{\rho }\right\}
^{+} \\
&=&\frac{1}{n}\sum_{i=1}^{n}I\left( W_{i}\in C_{0}\right) \left\Vert {\zeta }%
^{T}{D}h\left( W_{i}\right) \right\Vert _{p}^{\rho /(\rho -1)}\left( \frac{1%
}{\rho }\right) ^{1/\left( \rho -1\right) }\left( 1-\frac{1}{\rho }\right) ,
\end{eqnarray*}%
where the equality follows from (\ref{E00}). We then claim that 
\begin{equation}
\sup_{\left\Vert \zeta \right\Vert _{q}\leq b}\left\vert \widehat{M}%
_{n}\left( \zeta ,c_{0}\right) -M_{n}^{\prime }\left( \zeta ,c_{0}\right)
\right\vert \rightarrow 0.  \label{Aux_B_T}
\end{equation}%
In order to verify (\ref{Aux_B_T}), note, using the continuity of $Dh\left(
\cdot \right),$ that for any $\varepsilon ^{\prime }>0$ there exists $n_{0}$
such that if $n\geq n_{0}$ then (uniformly over $\left\Vert \zeta
\right\Vert _{p}\leq b$), 
\begin{equation*}
\left\vert \int_{0}^{1}I\left( W_{i}\in C_{0}\right) \left\Vert
    \zeta^T \left[{D}h(W_{i}+n^{-1/2}\bar{\Delta}_{i}u) -{D}h( W_{i})
\right] \right\Vert_{p}\left\Vert \bar{\Delta}_{i} \right\Vert
_{q}du\right\vert \leq \varepsilon ^{\prime }.
\end{equation*}
Therefore, if $n\geq n_{0}$, 
\begin{equation*}
\frac{1}{n}\sum_{i=1}^{n}I\left( W_{i}\in C_{0}\right) \left\vert {\zeta ^{T}%
}\int_{0}^{1}\left[ {D}h( W_{i}+n^{-1/2}\bar{\Delta}_{i}u) -{D%
}h( W_{i}) \right] \bar{\Delta}_{i}du\right\vert \leq \varepsilon^{\prime }.
\end{equation*}
Since $\varepsilon ^{\prime }>0$ is arbitrary, (\ref{Aux_B_T}) stands
verified. Then, applying Lemma \ref{Lem_Claim_2} we obtain
\begin{equation*}
  \widehat{M}_{n}\left( \zeta ,c_{0}\right) \rightarrow \E\left( {\zeta ^{T}D}%
    h\left( W_{i}\right) \bar{\Delta}_{i}du-\left\Vert \bar{\Delta}%
      _{i}\right\Vert _{q}^{\rho }\right) ^{+} I\left( W_i \in
    C_0\right), 
\end{equation*}%
uniformly over $\left\Vert \zeta \right\Vert _{p}\leq b$ as $n\rightarrow
\infty $, in probability. Therefore, applying the continuous mapping
principle, we have that 
\begin{align}
&\max_{\left\Vert \zeta \right\Vert _{p}\leq b}\left\{ -\zeta
^{T}H_{n}-M_{n}^{\prime }\left( \zeta ,c_{0}\right) \right\}  \notag \\
&\quad\Rightarrow \max_{\left\Vert \zeta \right\Vert _{p}\leq b}\left\{ -\zeta
^{T}H-\kappa \left( \rho \right) \E\left[ \left\Vert {\zeta }^{T}{D}h\left(
W\right) \right\Vert _{p}^{\rho /(\rho -1)}I\left( \left\Vert 
W \right\Vert_{p}\leq c_{0}\right) \right] \right\} ,  \label{rb}
\end{align}
as $n\rightarrow \infty $, where 
\begin{equation*}
\kappa \left( \rho \right) =\left( \frac{1}{\rho }\right) ^{1/\left( \rho
-1\right) }\left( 1-\frac{1}{\rho }\right) ,
\end{equation*}%
and $H\sim \mathcal{N}\left( 0,Cov[h\left( W,\theta _{\ast }\right)
  ]\right) $. From (\ref{B_N_BC_2}) and the construction of
(\ref{Def_A_n_E}), we can easily 
obtain that $n^{\rho /2}R_{n}\left( \theta _{\ast }\right) $ is
stochastically bounded (asymptotically)\ by
\begin{equation*}
\max_{\zeta }\left\{ -\zeta ^{T}H-\kappa \left( \rho \right) \E\left[
\left\Vert {\zeta }^{T}{D}h\left( W\right) \right\Vert _{p}^{\rho /(\rho
-1)}\right] \right\},
\end{equation*}
which verifies the first part of the theorem when $\rho >1.$

\textit{Now, for }$\rho =1$\textit{, we will follow very similar steps.}
Again, due to Lemma \ref{Lem_Claim_1} we concentrate on the region $%
\left\Vert \zeta \right\Vert _{p}\leq b$ for some $b>0$. For the upper
bound, define $\Delta _{i}^{\prime }$ as in (\ref{Def_Del_p_INT}). Using a
localization technique similar to that described in the proof of Lemma
\ref{Lem_Claim_1} in which the set $C_{0}$ as introduced we might
assume that $\left\Vert W_i \right\Vert _{p}\leq c_{0}$ for some
$c_{0}>0 $. Then, for a given constant $c>0$, setting
${\Delta}_{i}=c\Delta_{i}^{\prime },$ we obtain that  
\begin{align*}
  &\max_{\left\Vert \zeta \right\Vert_p \leq b}\left\{ -\zeta
    ^{T}H_{n}-\frac{1}{n}\sum_{i=1}^{n}\sup_{{\Delta}_{i}}\left\{ \zeta
    ^{T}\int_{0}^{1}{D} h( W_{i}+\Delta _{i}u/n^{1/2}) \Delta_{i}du- \left\Vert
    \Delta_{i}\right\Vert _{q}\right\} \right\} \\ 
  &\quad\leq \max_{\left\Vert \zeta \right\Vert_p \leq b}\left\{ -\zeta
    ^{T}H_{n}- \frac{1}{n}\sum_{i=1}^{n}\left( c\zeta ^{T}\int_{0}^{1}{D}h(
    W_{i}+c\Delta _{i}^{\prime }u/n^{1/2}) \Delta _{i}^{\prime
    }du-c\left\Vert \Delta _{i}^{\prime }\right\Vert _{q}\right) I\left(
    W_i \in C_0\right)\right\} .
\end{align*}
As in the case $\rho >1$ we have that%
\begin{equation*}
  \frac{1}{n}\sum_{i=1}^{n} I(W_i \in C_0)\int_{0}^{1}\zeta ^{T}\left[
    {D}h(W_{i}+ c\Delta _{i}^{\prime }u/n^{1/2}) -{D}h( W_{i}) \right]
  \Delta _{i}^{\prime }du\rightarrow 0 
\end{equation*}
in probability uniformly on $\zeta $-compact sets. Similarly, in
addition, for any $c>0$ and any $b>0$%
\begin{align*}
  &\max_{\left\Vert \zeta \right\Vert_p \leq b}\left\{ -\zeta ^{T}H_{n}-
    \frac{1}{n}\sum_{i=1}^{n}\left( c\zeta ^{T}{D}h(W_{i}) \Delta _{i}^{\prime
    }du-c\left\Vert \Delta _{i}^{\prime }\right\Vert _{q}\right) I\left(
    W_i \in C_0\right)\right\}\\
  &\quad=\max_{\left\Vert \zeta \right\Vert \leq b}\left\{ -\zeta
    ^{T}H_{n}-\frac{1}{n}\sum_{i=1}^{n} c \left( \left\Vert \zeta
    ^{T}{D}h\left( W\right) \right\Vert _{p}-1\right) ^{+} \Vert
    \Delta_i^\prime \Vert_q I( \Vert W_i \Vert_p \leq c_0) \right\} \\ 
  &\quad\Rightarrow \max_{\left\Vert \zeta \right\Vert \leq b}\left\{ -\zeta
    ^{T}H-c\E \left[\left( \left\Vert \zeta ^{T}{D}h\left( W\right) \right\Vert
    _{p}-1\right) ^{+} \Vert \bar{\zeta}^T Dh(W)\Vert_p^{p/q} I(\Vert
    W \Vert_p \leq c_0)\right]\right\},
\end{align*}
because
$\Vert \Delta^\prime \Vert_q^q = \Vert \bar{\zeta}^T
Dh(W_i)\Vert_p^p.$
Next, as the constant $c$ can be arbitrarily large, we obtain a
stochastic upper bound of the form
\begin{equation*}
\max_{\left\Vert \zeta \right\Vert \leq b:\Pr\left( \left\Vert \zeta ^{T}{D}%
h\left( W\right) \right\Vert _{p}\leq 1\right) =1}\left\{ -\zeta
^{T}H\right\} \leq \max_{\zeta :\Pr\left( \left\Vert \zeta ^{T}{D}h\left(
W\right) \right\Vert _{p}\leq 1\right) =1}\left\{ -\zeta ^{T}H\right\} .
\end{equation*}
This completes the proof of Theorem \ref{Thm-WPF_RWPI}. 
\end{proof}

\bigskip

\begin{proof}[Proof of Proposition \protect\ref{Prop_SLB_rho_1}]
  We follow the notation introduced in the proof of Theorem
  \ref{Thm-WPF_RWPI}. Recall from \eqref{Scaled-RWP-Dual-Rep} and
  \eqref{Mn-Defn} that 
\begin{equation*}
n^{1/2}R_{n}\left( \theta _{\ast }\right) =\sup_{\zeta } \left\{\zeta ^{T}H_{n}-
\frac{1}{n}\sum_{k=1}^{n}\sup_{\Delta } \left\{\int_{0}^{1}\zeta ^{T}Dh\left(
W_{i}+\Delta u/n^{1/2}\right) \Delta du-\left\Vert \Delta \right\Vert
_{q} \right\} \right\}.
\end{equation*}
Let $A :=\{\zeta :$ \
ess$\sup \left\Vert \zeta ^{T}Dh\left( w\right) \right\Vert _{p}\leq
1\}$,
where the essential supremum is taken with respect to the Lebesgue
measure. Then, due to H\"{o}lder's inequality, if $\zeta \in A$,
\begin{align*}
&\sup_{\Delta } \left\{\int_{0}^{1}\zeta ^{T}Dh\left( W_{i}+\Delta
u/n^{1/2}\right) \Delta du-\left\Vert \Delta \right\Vert _{q} \right\} \\
&\quad\leq \sup_{\Delta } \left\{\int_{0}^{1}\left\Vert \zeta ^{T}Dh\left(
W_{i}+\Delta u/n^{1/2}\right) \right\Vert _{p}\left\Vert \Delta \right\Vert
_{q}du-\left\Vert \Delta \right\Vert _{q} \right\} \\
&\quad\leq \sup_{\Delta }\left\Vert \Delta \right\Vert _{q} \left\{\int_{0}^{1}\left(
\left\Vert \zeta ^{T}Dh\left( W_{i}+\Delta u/n^{1/2}\right) \right\Vert
_{p}-1\right) du \right\}\leq 0.
\end{align*}
Consequently, 
\begin{equation*}
n^{1/2}R_{n}\left( \theta _{\ast }\right) \geq \sup_{\zeta \in A}\zeta
^{T}H_{n}.
\end{equation*}%
Letting $n\rightarrow \infty $ we conclude that 
\begin{equation*}
\sup_{\zeta \in A}\zeta ^{T}H_{n}\Rightarrow \sup_{\zeta \in A}\zeta ^{T}H.
\end{equation*}%
Because $W_{i}$ is assumed to have a density with respect to the Lebesgue
measure it follows that $\Pr\left( \left\Vert \zeta ^{T}Dh\left( W_{i}\right)
\right\Vert _{p}\leq 1\right) =1$ if and only if $\zeta \in A$ and the
result follows.
\end{proof}

\bigskip

\noindent 
Finally, we provide the proof of Proposition \ref{Prop_SLB_rho_L_1}.

\begin{proof}[Proof of Proposition \protect\ref{Prop_SLB_rho_L_1}]
  Recall from \eqref{Scaled-RWP-Dual-Rep} and \eqref{Mn-Defn} that
\begin{equation}
n^{1/2}R_{n}\left( \theta _{\ast }\right) =\sup_{\zeta } \left\{\zeta ^{T}H_{n}-
\frac{1}{n}\sum_{k=1}^{n}\sup_{\Delta } \left\{\int_{0}^{1}\zeta ^{T}Dh\left(
W_{i}+\Delta u/n^{1/2}\right) \Delta du-\left\Vert \Delta \right\Vert
_{q}^\rho \right\} \right\}. 
\label{RWP-Repn}
\end{equation}
As in the proof of Theorem \ref{Thm-WPF_RWPI}, due to Lemma
\ref{Lem_Claim_1}, we might assume that
$\left\Vert \zeta \right\Vert _{p}\leq b$ for some $b>0$. 

The strategy will be to split the inner supremum in values of
$\left\Vert \Delta \right\Vert _{q}\leq \delta n^{1/2}$ and values
$\left\Vert \Delta \right\Vert _{q}>\delta n^{1/2}$ for a suitably
small positive constant $\delta$. In Step 1, we shall show that the
supremum is achieved with high probability in the former region. Then,
in Step 2, we analyze the region in which
$\left\Vert \Delta \right\Vert _{q}\leq \delta n^{1/2}$ and argue that
the integrals inside the summation in \eqref{RWP-Repn} can be replaced
by $\zeta ^{T}Dh\left( W_{i}\right) \Delta $. Once this substitution
is performed we can solve the inner maximization problem explicitly in
Step 3 and, finally, we will apply a weak convergence result on
$\zeta $%
-compact sets to conclude the result. We now proceed to execute this
strategy.

\noindent
\textit{Execution of Step 1:} Pick $\delta >0$ small, to be chosen in the
sequel, then note that A5) implies (by redefining $\kappa $ if needed,
due to the continuity of $%
Dh\left( \cdot \right) $) that
\begin{equation*}
\left\Vert Dh\left( w\right) \right\Vert _{p}\leq \kappa \left( 1+\left\Vert
w\right\Vert _{q}^{\rho -1}\right) .
\end{equation*}
Therefore, for $\zeta$ such that $\Vert \zeta \Vert_p \leq b,$
\begin{align*}
  &\sup_{\left\Vert \Delta \right\Vert _{q}\geq \delta
    n^{1/2}} \left\{\int_{0}^{1}\left\vert \zeta ^{T}Dh\left( W_{i}+\Delta
    u/n^{1/2}\right) \Delta \right\vert du-\left\Vert \Delta \right\Vert
    _{q}^{\rho } \right\} \\
  &\quad\leq \sup_{\left\Vert \Delta \right\Vert _{q}\geq \delta
    n^{1/2}} \left\{b\kappa
    \left( 1+\int_{0}^{1}\left\Vert W_{i}+\Delta u/n^{1/2}\right\Vert _{q}^{\rho
    -1}du\right) \left\Vert \Delta \right\Vert _{q}-\left\Vert \Delta
    \right\Vert _{q}^{\rho } \right\}.
\end{align*}
Note that if $\rho \in (1,2)$, then $0<\rho -1<1,$ and therefore by the
triangle inequality and concavity 
\begin{equation*}
\left\Vert W_{i}+\Delta u/n^{1/2}\right\Vert _{q}^{\rho -1}\leq \left(
\left\Vert W_{i}\right\Vert _{q}+\left\Vert \Delta /n^{1/2}\right\Vert
_{q}\right) ^{\rho -1}\leq \left\Vert W_{i}\right\Vert _{q}^{\rho
-1}+\left\Vert \Delta /n^{1/2}\right\Vert _{q}^{\rho -1}.
\end{equation*}
On the other hand, if $\rho \geq 2$, then $\rho -1\geq 1$ and the triangle
inequality combined with Jensen's inequality applied as follows: 
\begin{equation*}
  \left\Vert a+c\right\Vert ^{\rho -1}\leq 2^{\rho -1} \left( \frac{1}{2}\left\Vert
      a \right \Vert^{\rho-1}+\frac{1}{2} \left \Vert c\right\Vert^{\rho -1}
  \right) = 2^{\rho -2}\left( \left\Vert a\right\Vert
    ^{\rho -1}+\left\Vert c\right\Vert ^{\rho -1}\right) ,
\end{equation*}
yields
\begin{equation*}
\left\Vert W_{i}+\Delta u/n^{1/2}\right\Vert _{q}^{\rho -1}\leq 2^{\rho
-2}\left( \left\Vert W_{i}\right\Vert _{q}^{\rho -1}+\left\Vert \Delta
/n^{1/2}\right\Vert _{q}^{\rho -1}\right) .
\end{equation*}
So, in both cases we can write
\begin{align*}
&\sup_{\left\Vert \Delta \right\Vert _{q}\geq \delta
n^{1/2}} \left\{\int_{0}^{1}\left\vert \zeta ^{T}Dh( W_{i}+\Delta
u/n^{1/2}) \Delta \right\vert du-\left\Vert \Delta \right\Vert
_{q}^{\rho } \right\} \\
&\quad\leq \sup_{\left\Vert \Delta \right\Vert _{q}\geq \delta
  n^{1/2}} \left \{b\kappa
\left( 1+2^{\rho -1}\left( \left\Vert W_{i}\right\Vert _{q}^{\rho
-1}+\left\Vert \Delta /n^{1/2} \right\Vert _{q}^{\rho -1}\right) \right)
\left\Vert \Delta \right\Vert _{q}-\left\Vert \Delta \right\Vert _{q}^{\rho
} \right\} \\
&\quad \leq \sup_{\left\Vert \Delta \right\Vert _{q}\geq \delta
  n^{1/2}} \left \{b\kappa
\left( \left\Vert \Delta \right\Vert _{q}+2^{\rho -1}\left\Vert
W_{i}\right\Vert _{q}^{\rho -1}\left\Vert \Delta \right\Vert _{q}+2^{\rho
-1}\left\Vert \Delta \right\Vert _{q}^{\rho }/n^{(\rho-1)/2}\right) -\left\Vert
\Delta \right\Vert _{q}^{\rho } \right\}.
\end{align*}
Next, as $\E\Vert W_n \Vert^\rho < \infty,$ we have that for any
$\varepsilon ^{\prime }>0$,
\begin{equation*}
\Pr\left( \left\Vert W_{n}\right\Vert _{q}^{\rho }\geq \varepsilon ^{\prime }n%
\text{ i.o.}\right) =0,
\end{equation*}%
therefore we might assume that there exists $n_{0}$ such that for all $i\leq
n$ and $n\geq n_{0}$, $\left\Vert W_{i}\right\Vert _{q}^{\rho -1}\leq \left(
\varepsilon ^{\prime }n\right) ^{(\rho -1)/\rho }$. Therefore, if $\left(
\varepsilon ^{\prime }\right) ^{(\rho -1)/\rho }\leq \delta ^{\rho
-1}/\left( b\kappa 2^{\rho}\right) $, we conclude that if $\left\Vert
\Delta \right\Vert _{q}\geq \delta n^{1/2}$ and $n > n_0,$ 
\begin{eqnarray*}
b\kappa 2^{\rho -1}\left\Vert W_{i}\right\Vert _{q}^{\rho -1}\left\Vert
\Delta \right\Vert _{q} &\leq &b\kappa 2^{\rho -1}\left( \varepsilon
^{\prime }n\right) ^{\left( \rho -1\right) /\rho }\left\Vert \Delta
\right\Vert _{q} \\
&\leq &\frac{1}{2}\delta ^{\rho -1}n^{\left( \rho -1\right) /\rho}\left\Vert
\Delta \right\Vert _{q}\leq \frac{1}{2}\left\Vert \Delta \right\Vert
_{q}^{\rho }.
\end{eqnarray*}%
Similarly, choosing $n$ sufficiently large we can guarantee that 
\begin{equation*}
b\kappa \left( \left\Vert \Delta \right\Vert _{q}+2^{\rho -1}\left\Vert
\Delta \right\Vert _{q}^{\rho }/n^{ (\rho - 1)/\rho}\right) \leq \frac{1}{2}\left\Vert
\Delta \right\Vert _{q}^{\rho }.
\end{equation*}%
Therefore, we conclude that for any fixed $\delta >0,$
\begin{equation}
  \sup_{\left\Vert \Delta \right\Vert _{q}\geq \delta \sqrt{n}
  } \left\{\int_{0}^{1}\left\vert \zeta ^{T}Dh( W_{i}+\Delta u/n^{1/2})
      \Delta \right\vert du-\left\Vert \Delta \right\Vert _{q}^{\rho }
  \right\}\leq 0
\label{Former-Region}
\end{equation}
provided $n$ is large enough, thus
achieving the desired result over the region
$\Vert \Delta \Vert_q \geq \delta \sqrt{n}.$

\noindent
\textit{Execution of Step 2:} Next, we let
$\varepsilon ^{\prime \prime }>0$, and note that
\begin{align}
  &\sup_{\left\Vert \Delta \right\Vert _{q}\leq \delta \sqrt{n}%
    } \left\{\int_{0}^{1}\zeta ^{T}Dh( W_{i}+\Delta u/n^{1/2}) \Delta
    du-\left\Vert \Delta \right\Vert _{q}^{\rho } \right\} \label{Inter-1}\\
  &\quad\leq \sup_{\left\Vert \Delta \right\Vert _{q}\leq \delta \sqrt{n}%
    } \left\{\int_{0}^{1}\zeta ^{T}\left[ Dh( W_{i}+\Delta u/n^{1/2})
    -Dh( W_{i}) \right] \Delta du-\varepsilon ^{\prime \prime
    }\left\Vert \Delta \right\Vert _{q}^{\rho } \right\} \notag\\
  &\quad\quad\quad\quad
    +\sup_{\left\Vert \Delta \right\Vert _{q}\leq \delta \sqrt{n}} \left\{\zeta
    ^{T}Dh\left( W_{i}\right) \Delta -(1-\varepsilon ^{\prime \prime
    })\left\Vert \Delta \right\Vert _{q}^{\rho } \right\}. \notag
\end{align}
We now argue locally, using A6), a bound for the first term in the
right hand side of \eqref{Inter-1}:
\begin{align}
&\sup_{\left\Vert \Delta \right\Vert _{q}\leq \delta \sqrt{n}%
} \left\{\int_{0}^{1}\zeta ^{T}\left[ Dh( W_{i}+\Delta u/n^{1/2})
-Dh( W_{i}) \right] \Delta du-\varepsilon ^{\prime \prime
}\left\Vert \Delta \right\Vert _{q}^{\rho } \right\}  \\
&\quad\quad\quad\quad\leq \sup_{\left\Vert \Delta \right\Vert _{q}\leq
  \delta \sqrt{n}} \left\{ \Vert \zeta \Vert_p \bar{\kappa}\left( W_{i}\right) \left\Vert \Delta \right\Vert
_{q}^{2}/n^{1/2}-\varepsilon ^{\prime \prime }\left\Vert \Delta \right\Vert
_{q}^{\rho } \right\}  \notag \\
&\quad\quad\quad\quad\leq \sup_{\left\Vert \bar{\Delta}\right\Vert
  _{q}\leq 1} \left\{b\bar{\kappa}\left(
W_{i}\right) \left\Vert \bar{\Delta}\right\Vert _{q}^{2}\delta
^{2}n^{1/2}-\varepsilon ^{\prime \prime }\left\Vert \bar{\Delta}\right\Vert
_{q}^{\rho }\left( \delta n^{1/2}\right) ^{\rho } \right\}.  \notag 
% &\quad \quad\quad\quad=\left( \delta b\bar{\kappa}\left( W_{i}\right) \right) ^{\rho /(\rho
% -1)}\left( \varepsilon ^{\prime \prime }\right) ^{-1/(\rho -1)}(\rho
% -1)/\rho .  \label{Dif_Aux}
\end{align}
As
$ \sup_{x \in [0,1]} \left\{ a_n x^2 - b_n x^\rho \right\} \leq
(\rho-2)^+ (a_n^\rho/b_n^2)^{1/(\rho-2)}/\rho$
when $b_n > a_n,$ we have, for all $n$ sufficiently large, that 
\begin{align*}
\sup_{\left\Vert \Delta \right\Vert _{q}\leq \delta \sqrt{n}%
} \left\{\int_{0}^{1}\zeta ^{T}\left[ Dh( W_{i}+\Delta u/n^{1/2})
-Dh( W_{i}) \right] \Delta du-\varepsilon ^{\prime \prime
}\left\Vert \Delta \right\Vert _{q}^{\rho } \right\}  \leq
  \frac{(\rho-2)^+}{\rho}
  \left(\frac{b\bar{\kappa}(W_i)}{\varepsilon^{\prime\prime}\sqrt{n}}\right)^{\rho/(\rho-2)}. 
\end{align*}
% \begin{align*}
%   \sup_{x \in [0,1]} \left\{ a_n x^2 - b_n x^\rho \right\} \leq
%   \frac{(\rho-2)^+}{\rho}
%   \frac{a_n^{\rho/(\rho-2)}}{b_n^{2/(\rho-2)}}, 
% \end{align*}
Since $\E[\bar{\kappa}(W)^2] < \infty$ (from Assumption A6)), we have
that
$\Pr(\bar{\kappa}(W_i) > \varepsilon^{\prime \prime \prime} \sqrt{i}
\text{ i.o.}) = 0$ for any $\varepsilon^{\prime \prime \prime} >
0.$ % Consequently,
% given $\varepsilon^{\prime \prime \prime} > 0,$ we assume that there
% exists $n_{\varepsilon^{\prime \prime \prime}}$ large enough such that
% $\bar{\kappa}(W_i) \leq \varepsilon^{\prime \prime \prime} \sqrt{i}$
% for all $i \geq n_{\varepsilon^{\prime \prime \prime}}.$
Consecutively, $\bar{\kappa}(W_i) < \varepsilon^{\prime \prime \prime}
\sqrt{i}$  for all $i$ large enough, and therefore, 
\begin{align*}
  &\overline{\lim }_{n\rightarrow \infty
    }\frac{1}{n}\sum_{i=1}^{n}\sup_{\left\Vert \Delta \right\Vert
    _{q}\leq \delta \sqrt{n}} \left\{\int_{0}^{1}\zeta 
    ^{T}\left[ Dh\left( W_{i}+\Delta u/n^{1/2}\right) -Dh\left( W_{i}\right) %
    \right] \Delta du-\varepsilon ^{\prime \prime }\left\Vert \Delta \right\Vert
    _{q}^{\rho } \right\} \\
  &\quad\quad\quad\quad \leq \frac{(\rho-2)^+}{\rho} \overline{\lim
    }_{n\rightarrow \infty}  \left(\frac{b}{\varepsilon^{\prime \prime}}
    \right)^{\rho/(\rho-2)} \frac{1}{n}\sum_{i=1}^n
    \left(\frac{\bar{\kappa}(W_i)}{\sqrt{n}}\right)^{\rho/(\rho-2)}
    \leq \frac{(\rho-2)^+}{\rho} \left(b\frac{\varepsilon^{\prime \prime \prime}}{\varepsilon^{\prime
    \prime}}\right)^{\rho/(\rho-2)},  
\end{align*}
which can be made arbitrarily small by choosing
$\varepsilon^{\prime \prime \prime}$ arbitrarily small. Therefore, for
any fixed $\varepsilon^{\prime \prime}, \delta > 0,$
\begin{align}
  \overline{\lim }_{n\rightarrow \infty
  }\frac{1}{n}\sum_{i=1}^{n}\sup_{\left\Vert \Delta \right\Vert
  _{q}\leq \delta \sqrt{n}} \left\{\int_{0}^{1}\zeta 
  ^{T}\left[ Dh\left( W_{i}+\Delta u/n^{1/2}\right) -Dh\left( W_{i}\right) %
  \right] \Delta du-\varepsilon ^{\prime \prime }\left\Vert \Delta \right\Vert
  _{q}^{\rho } \right\} = 0. 
  \label{Latter-Region}
\end{align}

\noindent
\textit{Execution of Step 3:} Next, it follows from
\eqref{Former-Region}, \eqref{Inter-1} and \eqref{Latter-Region} that
for any fixed $\varepsilon ^{\prime \prime },\delta >0$, there exists
$N_{0}$ such that if $n\geq N_{0}$,
\begin{align*}
  &\frac{1}{n}\sum_{i=1}^n\sup_{\Delta } \left\{\int_{0}^{1}\zeta ^{T}Dh\left( W_{i}+\Delta
    u/n^{1/2}\right) \Delta du-\left\Vert \Delta \right\Vert _{q}^{\rho } \right\}\\
  &\quad\quad\leq \frac{1}{n}\sum_{i=1}^n\sup_{\Delta \leq \delta \sqrt{n}}
    \left\{\zeta ^{T}Dh\left( W_{i}\right) \Delta 
    du-(1-\varepsilon ^{\prime \prime })\left\Vert \Delta \right\Vert _{q}^{\rho
    } \right\}+\delta \\
  &\quad\quad\leq \frac{1}{n}\sum_{i=1}^n\min \left\{ \kappa \left(
    \rho ,\varepsilon ^{\prime \prime}\right) \left\Vert {\zeta
    }^{T}{D}h\left( W_{i}\right) \right\Vert _{p}^{\rho /(\rho 
    -1)},\ c_{n}\right\} + \delta,
\end{align*}
where 
\begin{equation*}
\kappa \left( \rho ,\varepsilon ^{\prime \prime }\right) =\left( \frac{1}{
\rho (1-\varepsilon ^{\prime \prime })}\right) ^{1/\left( \rho -1\right)
}\left( 1-\frac{1}{\rho }\right),
\end{equation*}
and $c_{n}\rightarrow \infty $ as $n\rightarrow \infty $ (the exact value of 
$c_{n}$ is not important).

\noindent 
Next, note that A5) implies that
\begin{equation*}
\left\Vert Dh\left( W_{i}\right) \right\Vert _{p}^{\rho /\left( \rho
    -1 \right) }I\left( \left\Vert W_{i}\right\Vert \geq 1\right) \leq \kappa
I\left( \left\Vert W_{i}\right\Vert \geq 1\right) \left\Vert
W_{i}\right\Vert _{q}^{\rho }\leq \kappa \left\Vert W_{i}\right\Vert
_{q}^{\rho }
\end{equation*}%
and, therefore, since $Dh\left( \cdot \right) $ is continuous (therefore
locally bounded)\ and $\E\left\Vert W_{i}\right\Vert _{q}^{\rho }<\infty $
also by A5), we have % (because $\left\Vert w\right\Vert _{p}\leq
% d^{1/p}\left\Vert w\right\Vert _{q}$)
that
\begin{equation*}
\E\left\Vert Dh\left( W\right) \right\Vert _{p}^{\rho /\left( \rho-1
\right) }<\infty .
\end{equation*}
Then, an argument similar to Lemma \ref{Lem_Claim_2} shows that
\begin{align*}
&\sup_{\left\Vert \zeta \right\Vert_p \leq b} \left\{\zeta ^{T}H_{n}-\frac{1}{n}
\sum_{i=1}^{n}\left\{\kappa \left( \rho ,\varepsilon ^{\prime \prime}\right)
\left\Vert {\zeta }^{T}{D}h\left( W_{i}\right) \right\Vert _{q}^{\rho /(\rho
-1)},c_{n}\right\} \right\} \\
&\quad\quad\quad\quad\Rightarrow \sup_{\left\Vert \zeta \right\Vert_p
  \leq b} \left \{\zeta^{T}H-\kappa \left( \rho ,\varepsilon
  ^{\prime \prime }\right) \E\left\Vert {\zeta }^{T}{D}h\left(
  W_{i}\right) \right\Vert _{q}^{\rho /(\rho -1)} \right\}, 
\end{align*}
as $n\rightarrow \infty $ (where $\Rightarrow $ denotes weak convergence).
Finally, we can send $\varepsilon ^{\prime \prime },\delta \rightarrow 0$
and $b\rightarrow \infty $ to obtain the desired asymptotic stochastic
lower bound. 
\end{proof}

\subsection{Proofs of RWP function limit theorems for linear and logistic
regression examples\label{Sec-RWP-Eg-Proofs}}

We first obtain the dual formulation of the respective RWP functions for
linear and logistic regressions using Proposition \ref{Prop-RWP-Duality}.
Let $\E[h(x,y;\beta)] = \mathbf{0}$ be the estimating equation under
consideration ($h(x,y;\beta) = (y-\beta^Tx)x$ for linear regression and $%
h(x,y;\beta)$ as in \eqref{Est-Eq-RLogR} for logistic regression). Recall
that the cost function is $c(\cdot) = N_q(\cdot).$ Due to the duality result
in Proposition \ref{Prop-RWP-Duality}, we obtain 
\begin{align*}
R_n(\beta_\ast) &= \inf\big\{ D_c(\Pr,\Pr_n): \ \E_\Pr[h(X,Y;\beta_\ast)] = \mathbf{%
0}\big\} \\
&= \sup_{\lambda} \left\{ -\frac{1}{n} \sum_{i=1}^n \sup_{(x^{\prime
},y^{\prime })} \left\{ \lambda^Th(x^{\prime },y^{\prime };\beta_\ast) - N_q%
\big( (x^{\prime },y^{\prime }), (X_i,Y_i)\big)\right\}\right\}.
\end{align*}
As $N_q((x^{\prime },y^{\prime }),(X_i,Y_i)) = \infty$ when $y^{\prime }\neq
Y_i,$ the above expression simplifies to, 
\begin{equation}
R_n(\beta_\ast) = \sup_{\lambda} \left\{ -\frac{1}{n} \sum_{i=1}^n
\sup_{x^{\prime }} \left\{ \lambda^Th(x^{\prime },Y_i;\beta_\ast) -\Vert
x^{\prime }- X_i \Vert_q^\rho\right\}\right\},  \label{RWP-Dual-Lin-Log-Reg}
\end{equation}
where $\rho = 2$ for the case of linear regression (Theorem
\ref{Thm-WPF}) and $\rho = 1$ for the case of logistic regression
(Theorem \ref{Thm-GLM-WPF}%
). As RWP function here is similar to the RWP function for general
estimating equation in Section \ref{Sec_Quad_WPF}, a similar limit
theorem holds. We state here the assumptions for proving RWP limit
theorems for the dual formulation in \eqref{RWP-Dual-Lin-Log-Reg}.

% Let $\mathcal{S}_y$ denote the set of all possible values that the
% random variable $Y$ can take. For example,
% $\mathcal{S}_y = \mathbb{R}$ in the case of linear regression, and
% $\mathcal{S}_y = \{+1,-1\}$ in the case of logistic regression for
% binary classification.  We use $D_xh(\cdot)$ to denote the partial
% derivative of $h$ with respect to $x.$

\noindent \textbf{Assumptions:}\newline
\textbf{A2')} Suppose that $\beta_\ast \in \mathbb{R}^d$ satisfies $%
\E[h(X,Y;\beta_\ast)] = \mathbf{0}$ and $\E\Vert h(X,Y;\beta_\ast)\Vert_2^2 <
\infty$ (While we do not assume that $\beta_{\ast }$ is unique, the results
are stated for a fixed $\beta_{\ast }$ satisfying $\E[h(X,Y;\beta_\ast)] = 
\mathbf{0}$.)

% % \noindent
% % \textbf{A3')} Suppose that $D_xh(\cdot,y;\beta_\ast\cdot)$ is
% % continuous for every $y \in \mathcal{S}_y.$

\noindent \textbf{A4')} Suppose that for each $\xi \neq \mathbf{0}$, the
partial derivative $D_xh(x,y;\beta_\ast)$ satisfies, 
\begin{align*}
\Pr\left( \big\Vert \xi^T D_{x}h(X, Y;\beta_\ast)\big\Vert_p > 0 \right) > 0.
\end{align*}

\noindent \textbf{A6')} Assume that there exists $\bar{\kappa}: \mathbb{R}^m
\rightarrow \infty$ such that 
\begin{align*}
\Vert D_xh(x + \Delta, y;\beta_\ast) - D_xh(x,y;\beta_\ast)\Vert_p \leq \bar{%
\kappa}(x,y) \Vert \Delta \Vert_q,
\end{align*}
for all $\Delta \in \mathbb{R}^d,$ and ${\E}[\bar{\kappa}(X,Y)^2] <
\infty.$

\begin{lemma}
If $\rho \geq 2,$ under Assumptions A2'), A4') and A6'), we have, 
\begin{align*}
nR_n(\beta_\ast;\rho) \Rightarrow \bar{R}(\rho),
\end{align*}
where 
\begin{align*}
\bar{R}(\rho) = \sup_{\xi \in \mathbb{R}^d} \bigg\{ \rho \xi^T H - (\rho-1)
\E \left\Vert \xi^TD_xh(X,Y;\beta_\ast)\right\Vert_p^{\rho/(\rho-1)} \bigg\},
\end{align*}
with $H \sim \mathcal{N}(\mathbf{0}, \text{Cov}[h(X,Y;\beta_\ast)]$ and $1/p
+ 1/q = 1.$ \label{Lem-Reg-RWP}
\end{lemma}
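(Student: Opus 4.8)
The plan is to treat Lemma \ref{Lem-Reg-RWP} as the specialization of the general machinery of Section \ref{Sec_Quad_WPF} to the RWP function built with the modified cost $N_q$, for which the transport budget is spent exclusively on the predictor coordinate $x$ (since $N_q$ charges $+\infty$ for any perturbation of $y$). Concretely, I would start from the dual representation \eqref{RWP-Dual-Lin-Log-Reg}, which already follows from Proposition \ref{Prop-RWP-Duality}, and carry out the same change of variables as in Section \ref{Sec-Quad-WPF-Proofs}: introduce $H_n = n^{-1/2}\sum_{i=1}^n h(X_i,Y_i;\beta_\ast)$, substitute $\Delta = x' - X_i$, invoke the fundamental theorem of calculus $h(X_i+\Delta,Y_i;\beta_\ast) - h(X_i,Y_i;\beta_\ast) = \int_0^1 D_x h(X_i + u\Delta, Y_i;\beta_\ast)\Delta\, du$, and rescale $\zeta = \lambda n^{(\rho-1)/2}$, $\Delta \mapsto \Delta/\sqrt n$. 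This produces exactly the representation \eqref{Scaled-RWP-Dual-Rep}--\eqref{Mn-Defn}, with $D_w h$ replaced throughout by the partial derivative $D_x h$ and with the normalization $n^{\rho/2}$, which equals $n$ in the relevant case $\rho = 2$.

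Next I would verify that Assumptions A2'), A4'), A6') are precisely the hypotheses A2)--A6) demanded by Theorem \ref{Thm-WPF_RWPI} and Proposition \ref{Prop_SLB_rho_L_1} in this restricted geometry. A2') is A2) for the present $h$; A4') is A4) stated for $D_x h$; and A6') is a (globally Lipschitz, hence stronger) form of A6), whose moment requirement $E[\bar\kappa(X,Y)^2]<\infty$ coincides with A6) because $\max\{2,\rho/(\rho-1)\}=2$ when $\rho\ge 2$. Differentiability A3) is subsumed by the Lipschitz continuity of $D_x h$ in A6'), and the growth bound A5) follows from A6'): global Lipschitzness of $D_x h$ forces the linear growth $\|D_x h(x,y;\beta_\ast)\|_p = O(\|x\|_q) = O(\|x\|_q^{\rho-1})$ for $\rho=2$, while the needed moment $E\|X\|_q^\rho<\infty$ is implied by A2').

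With the assumptions in hand, the upper bound is obtained verbatim as in the proof of Theorem \ref{Thm-WPF_RWPI}: Lemma \ref{Lem_Claim_1} localizes the outer maximization to a compact set $\{\|\zeta\|_p \le b\}$ (its hypotheses A2)--A4) now read A2'), A4')), one bounds $M_n(\zeta)$ below by the surrogate obtained by freezing the derivative at $W_i$ and discarding the points outside $C_0=\{\|W_i\|_p\le c_0\}$, and then Lemma \ref{Lem_Claim_2} together with the continuous mapping theorem yields the limit $\max_\zeta\{ -2\zeta^T H - \kappa(\rho) E\|\zeta^T D_x h\|_p^{\rho/(\rho-1)}\}$, which after the change of variable used in Theorem \ref{Thm-WPF_RWPI} is the claimed $\bar R(\rho)$. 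For the matching lower bound I would replay the proof of Proposition \ref{Prop_SLB_rho_L_1}: split the inner supremum at $\|\Delta\|_q = \delta\sqrt n$, use the growth consequence of A6') to show that the contribution of $\|\Delta\|_q > \delta\sqrt n$ is non-positive for all large $n$, use A6') again to replace the integrated derivative by $\zeta^T D_x h(X_i,Y_i)\Delta$ on the complementary region at the cost of an error controlled as in \eqref{Dif_Aux}, solve the remaining concave problem explicitly, and pass to the limit by the strong law and weak convergence on $\zeta$-compacts before sending $\delta\downarrow 0$ and $b\uparrow\infty$.

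The main obstacle, exactly as in Section \ref{Sec-Quad-WPF-Proofs}, is the lower bound: one must rule out that the inner supremum over $\Delta$ is $+\infty$ with non-vanishing probability in the prelimit and must justify the derivative-freezing approximation uniformly in $\zeta$ over compacts. This is where the global Lipschitz assumption A6') does the real work; for $\rho=2$ it furnishes linear growth of $D_x h$, so the penalty $-\|\Delta\|_q^\rho = -\|\Delta\|_q^2$ dominates the linear-in-$\Delta$ gain and guarantees a finite optimum, while the second-moment control $E[\bar\kappa^2]<\infty$ is exactly the integrability needed to invoke the strong law after the quadratic error estimate \eqref{Dif_Aux}. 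I expect no additional difficulty from the fixed constraint $y=Y_i$, since it merely confines all perturbations to the $x$-fiber and thereby replaces $D_w h$ by $D_x h$ without altering the scaling analysis.
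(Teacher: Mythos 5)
Your proposal takes exactly the route the paper intends: the authors omit this proof on the grounds that it ``follows closely the proof of our results in Section \ref{Sec_WPF},'' which is precisely your plan of replaying Theorem \ref{Thm-WPF_RWPI} and Proposition \ref{Prop_SLB_rho_L_1} with $D_w h$ replaced by $D_x h$ and all transport confined to the $x$-fiber by the cost $N_q$ (including your observation that the correct general normalization is $n^{\rho/2}$, which equals $n$ at the relevant value $\rho=2$). One caveat: your bridging claim that A5) follows from A6') with the moment $E\Vert X\Vert_q^{\rho}<\infty$ supplied by A2') is not literally correct --- A5) demands a deterministic growth constant $\kappa$, whereas A6') gives a Lipschitz constant $\bar{\kappa}(x,y)$ anchored at the data point, and A2') controls moments of $h$, not of $X$ --- so Proposition \ref{Prop_SLB_rho_L_1} cannot be cited as a black box; however, this does not break your argument, since your final paragraph correctly identifies the repair that the omitted proof would use, namely that for $\rho \geq 2$ the global Lipschitz bound in A6') together with $E[\bar{\kappa}(X,Y)^2]<\infty$ (hence $\max_{i\leq n}\bar{\kappa}(X_i,Y_i)=o_p(\sqrt{n})$) lets the transport penalty $-\Vert\Delta\Vert_q^{\rho}$ absorb the linearization error uniformly in $i$, so that A6') plays the role of A5) everywhere it appears in the lower-bound proof.
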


\begin{lemma}
If $\rho = 1,$ in addition to assuming A2'), A4'), suppose that $%
D_xh(\cdot,y;\beta_\ast)$ is continuous for every $y$ in the support of
probability distribution of $Y.$ Also suppose that $X$ has a positive
probability density (almost everywhere) with respect to the Lebesgue
measure. Then, 
\begin{align*}
nR_n(\beta_\ast;1) \Rightarrow \bar{R}(1),
\end{align*}
where 
\begin{align*}
\bar{R}(1) = \sup_{\xi: \Pr\left( \Vert \xi^T D_x h(X,Y;\beta_\ast)\Vert_p >
1\right) = 0} \big\{\xi^TH\big\},
\end{align*}
with $H \sim \mathcal{N}(\mathbf{0}, \text{Cov}[h(X,Y;\beta_\ast]).$ \label%
{Lem-Reg-RWP-Eq-1}
\end{lemma}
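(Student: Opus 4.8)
The plan is to derive Lemma~\ref{Lem-Reg-RWP-Eq-1} as the specialization of the general $\rho=1$ weak-limit result, Proposition~\ref{Prop_SLB_rho_1}, to the logistic estimating function $h(x,y;\beta_\ast)$ of \eqref{Est-Eq-RLogR}. The starting point is the dual representation \eqref{RWP-Dual-Lin-Log-Reg} with $\rho=1$: because the cost $N_q$ is infinite whenever $y'\neq Y_i$, transport acts only on the predictor coordinate, so the derivative entering the analysis is the partial derivative $D_x h(\cdot,y;\beta_\ast)$, and A2'), A4') together with the assumed continuity of $D_x h(\cdot,y;\beta_\ast)$ play precisely the roles of A2), A4) and A3) in Section~\ref{Sec_Quad_WPF}. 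First I would change variables $\Delta=x'-X_i$ and rescale the dual variables as in the passage leading to \eqref{Scaled-RWP-Dual-Rep}--\eqref{Mn-Defn}, expressing the scaled profile function $n R_n(\beta_\ast;1)$ (normalized as in the statement) as a supremum over $\zeta$ of $\zeta^T H_n$ minus an empirical penalty whose $i$-th term is $\sup_{\Delta}\{\int_0^1\zeta^T D_x h(X_i+\Delta u/\sqrt n,Y_i;\beta_\ast)\Delta\,du-\|\Delta\|_q\}$, where $H_n$ is the appropriately scaled empirical average of $h(X_i,Y_i;\beta_\ast)$, so that $H_n\Rightarrow H\sim\mathcal N(\mathbf 0,\text{Cov}[h(X,Y;\beta_\ast)])$ by the central limit theorem (the second moment being finite by A2')). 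The $\rho=1$ branch of the localization Lemma~\ref{Lem_Claim_1} then restricts the outer supremum to a compact ball $\|\zeta\|_p\leq b$ up to an event of arbitrarily small probability.

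For the upper bound I would reuse the $\rho=1$ computation from the proof of Theorem~\ref{Thm-WPF_RWPI}: plugging the H\"older-extremal perturbation into the penalty and letting its coefficient grow confines the limiting optimizer to the set $A:=\{\zeta:P(\|\zeta^T D_x h(X,Y;\beta_\ast)\|_p>1)=0\}$, yielding the asymptotic stochastic upper bound $n R_n(\beta_\ast;1)\lesssim_D\sup_{\zeta\in A}\zeta^T H=\bar R(1)$. Assumption A4') guarantees that $A$ is bounded---if some $\xi\neq\mathbf 0$ violated boundedness, rescaling would force $\|\xi^T D_x h(X,Y;\beta_\ast)\|_p=0$ almost surely, contradicting A4')---so $H_n\mapsto\sup_{\zeta\in A}\zeta^T H_n$ is a finite, continuous, convex functional, and the weak convergence $H_n\Rightarrow H$ transfers through the continuous mapping theorem to give $\sup_{\zeta\in A}\zeta^T H_n\Rightarrow\sup_{\zeta\in A}\zeta^T H$.

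The matching lower bound is the heart of the argument and the place where both standing hypotheses of this lemma are indispensable. Fixing $\zeta\in A$, the constraint $P(\|\zeta^T D_x h(X,Y;\beta_\ast)\|_p>1)=0$ yields $\|\zeta^T D_x h(x,y;\beta_\ast)\|_p\leq1$ for Lebesgue-almost-every $x$ and every $y$ in the finite support of $Y$; because $X$ has a positive density everywhere and $D_x h(\cdot,y;\beta_\ast)$ is continuous, this almost-everywhere bound upgrades to hold at every $x$, in particular at the perturbed points $X_i+\Delta u/\sqrt n$. H\"older's inequality then gives, for each $i$,
\[
\sup_{\Delta}\left\{\int_0^1\zeta^T D_x h\big(X_i+\Delta u/\sqrt n,Y_i;\beta_\ast\big)\Delta\,du-\|\Delta\|_q\right\}\leq\sup_{\Delta}\|\Delta\|_q\left(\int_0^1\big\|\zeta^T D_x h(X_i+\Delta u/\sqrt n,Y_i;\beta_\ast)\big\|_p\,du-1\right)\leq0,
\]
whence $n R_n(\beta_\ast;1)\geq\sup_{\zeta\in A}\zeta^T H_n$. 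Combined with the weak convergence of the previous paragraph, this produces an asymptotic stochastic lower bound equal to $\bar R(1)$, matching the upper bound and hence yielding $n R_n(\beta_\ast;1)\Rightarrow\bar R(1)$. The main obstacle is exactly this almost-everywhere-to-everywhere upgrade: without the positive density one cannot pass from the distributional constraint defining $A$ to a pointwise inequality, and without continuity of $D_x h(\cdot,y;\beta_\ast)$ one cannot control the integrand at the perturbed arguments that appear once the increment is written in integral form.
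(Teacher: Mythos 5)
Your proposal is correct and takes essentially the same route the paper intends: it omits the proof of Lemma~\ref{Lem-Reg-RWP-Eq-1} precisely because it follows the Section~\ref{Sec_Quad_WPF} arguments (the $\rho=1$ branch of Theorem~\ref{Thm-WPF_RWPI} for the upper bound and Proposition~\ref{Prop_SLB_rho_1} for the lower bound) with $W$ replaced by $(X,Y)$, $D_wh$ by $D_xh$, and the density assumption transferred to $X$ since $N_q$ forbids transport in the $y$-coordinate; your explicit almost-everywhere-to-everywhere upgrade via continuity of $D_xh(\cdot,y;\beta_\ast)$ is exactly the step Proposition~\ref{Prop_SLB_rho_1} uses implicitly. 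One caveat: the rescaling you perform correctly yields the normalization $\sqrt{n}\,R_n(\beta_\ast;1)$ (consistent with Theorem~\ref{Thm-GLM-WPF}), so the factor $n$ in the lemma's display, which you adopted verbatim from the statement, is a typo in the paper and should read $\sqrt{n}$.
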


\noindent 
% The proofs of Lemmas \ref{Lem-Reg-RWP} and \ref{Lem-Reg-RWP-Eq-1} are
% similar to the proof of the main results in Section 2 (Theorem
% \ref{Thm-WPF_RWPI}, Propositions \ref{Prop_SLB_rho_1} and
% \ref{Prop_SLB_rho_L_1})  
The proof of Lemma \ref{Lem-Reg-RWP} and \ref{Lem-Reg-RWP-Eq-1}
follows closely the proof of our results in Section \ref{Sec_WPF} and
therefore it is omitted. We prove Theorem \ref{Thm-WPF} and
\ref{Thm-GLM-WPF} as a quick application of these lemmas.

\begin{proof}[Proof of Theorem \protect\ref{Thm-WPF}]
To show that the RWP function dual formulation in %
\eqref{RWP-Dual-Lin-Log-Reg} converges in distribution, we verify the
assumptions of Lemma \ref{Lem-Reg-RWP} with $h(x,y;\beta) = (y-\beta^Tx)x.$
Under the null hypothesis $H_0,$ $Y - \beta_\ast^T X = e$ is independent of $%
X,$ has zero mean and finite variance $\sigma^2.$ Therefore, 
\begin{align*}
\E\left[ h(X,Y;\beta) \right] &= \E\left[eX \right] = 0, \text{ and } \\
\E \Vert h(X,Y;\beta) \Vert_2^2 &= \E\left[ e^2 X^TX \right] = \sigma^2 \E\Vert
X \Vert_2^2,
\end{align*}
which is finite, because trace of the covariance matrix $\Sigma$ is finite.
This verifies Assumption A2'). Further, 
\begin{align*}
D_xh(X,Y;\beta_\ast) = \big(y-\beta_\ast^TX \big)I_d - X\beta_\ast^T = eI_d
- X\beta_\ast^T,
\end{align*}
where $I_d$ is the $d \times d$ identity matrix. For any $\xi \neq \mathbf{0}%
, $ 
\begin{align*}
\Pr\left( \Vert \xi^T D_xh(X,Y;\beta_\ast)\Vert_p = 0\right) = \Pr \left( e\xi =
(\xi^TX) \beta \right) = 0,
\end{align*}
thus satisfying Assumption A4') trivially. In addition, 
\begin{align*}
\Vert D_xh(x+\Delta, y;\beta_\ast) - D_xh(x,y;\beta_\ast) \Vert_p &=
\left\Vert \beta_\ast^T\Delta I_d - \Delta \beta_\ast^T\right\Vert_p \leq c
\Vert \Delta \Vert_q,
\end{align*}
for some positive constant $c.$ This verifies Assumption A6'). As all the
assumptions imposed in Lemma \ref{Lem-Reg-RWP} are easily satisfied, using $%
\rho = 2,$ we obtain the following convergence in distribution as a
consequence of Lemma \ref{Lem-Reg-RWP}. 
\begin{align*}
R_n(\beta_\ast) \Rightarrow \sup_{\xi \in \mathbb{R}^d} \bigg\{ 2\xi^TH -
\E\left\Vert e\xi - (\xi^TX)\beta_\ast\right\Vert_p^2\bigg\},
\end{align*}
as $n \rightarrow \infty.$ Here, $H \sim \mathcal{N}(\mathbf{0}, \text{Cov}%
[h(X,Y;\beta_\ast)].$ As $\text{Cov}[h(X,Y;\beta_\ast)] = \E\left[ e^2 X X^T%
\right] = \sigma^2 \Sigma,$ if we let $Z = H/\sigma,$ we obtain the limit
law, 
\begin{align*}
L_1 = \sup_{\xi \in \mathbb{R}^d}\bigg\{ 2\sigma \xi^TZ - \E\left\Vert e\xi -
(\xi^TX)\beta_\ast\right\Vert_p^2\bigg\},
\end{align*}
where $Z = \mathcal{N}(\mathbf{0}, \Sigma),$ as in the statement of the
theorem.

\noindent \textit{Proof of the stochastic upper bound in Theorem \ref%
{Thm-WPF}:} For the stochastic upper bound, let us consider the asymptotic
distribution $L_{1}$ and rewrite the maximization problem as, 
\begin{align*}
L_1 &= \sup_{\left\Vert \xi\right\Vert_{p}=1 }\sup_{\alpha\geq 0}\left\{
2\sigma \alpha \xi ^{T}Z-\alpha^{2}\E\left\Vert e\xi -(\xi ^{T}X)\beta_{\ast
}\right\Vert _{p}^{2}\right\} \\
&\leq \sup_{\left\Vert\xi\right\Vert_{p}=1 }\sup_{\alpha\geq 0}\left\{
2\sigma\alpha \left\Vert Z\right\Vert_{q}-\alpha^{2}\E\left\Vert e\xi -(\xi
^{T}X)\beta_{\ast }\right\Vert _{p}^{2}\right\},
\end{align*}
because of H\"{o}lder's inequality. By solving the inner optimization
problem in $\alpha,$ we obtain 
\begin{align}
L_1 &\leq \sup_{\left\Vert\xi\right\Vert_{p}=1 } \frac{\sigma^2\left\Vert
Z\right\Vert_{q}^{2}}{\E\left\Vert e\xi -(\xi^{T}X)\beta_{\ast }\right\Vert
_{p}^{2}} = \frac{ \sigma^2\left\Vert Z\right\Vert_{q}^{2}}{%
\inf_{\left\Vert\xi\right\Vert_{p}=1}\E\left\Vert e\xi -(\xi
^{T}X)\beta_{\ast }\right\Vert _{p}^{2}}.  \label{Linear_Upper}
\end{align}
Next, consider the minimization problem in the denominator: Due to triangle
inequality, 
\begin{align*}
\inf_{\left\Vert\xi\right\Vert_{p}=1}\E\left\Vert e\xi -(\xi
^{T}X)\beta_{\ast }\right\Vert _{p}^{2} &\geq
\inf_{\left\Vert\xi\right\Vert_{p}=1} \E\left(\left\vert e\right\vert
\left\Vert
\xi\right\Vert_{p}-\left\vert\xi^{T}X\right\vert\left\Vert\beta_{\ast}\right%
\Vert_{p} \right)^{2} \\
&= \E\left\vert e\right\vert^{2}
+\inf_{\left\Vert\xi\right\Vert_{p}=1}\left\{ \left\Vert
\beta_{\ast}\right\Vert_{p}^{2}\E\left\vert \xi^{T}X\right\vert^{2}
-2\left\Vert \beta_{\ast}\right\Vert_{p} \E\left\vert e\right\vert
\E\left\vert \xi^{T}X\right\vert \right\} \\
&\geq \E\left\vert e\right\vert^{2}
+\inf_{\left\Vert\xi\right\Vert_{p}=1}\left\{ \left\Vert
\beta_{\ast}\right\Vert_{p}^{2}\left(\E\left\vert
\xi^{T}X\right\vert\right)^{2} -2 \left\Vert \beta_{\ast}\right\Vert_{p}
\E\left\vert e\right\vert \E\left\vert \xi^{T}X\right\vert\right\} \\
&= \E\left\vert e\right\vert^{2} - \left(\E\left\vert e\right\vert\right)^{2}
+ \inf_{\left\Vert\xi\right\Vert_{p}=1}
\left(\left\Vert\beta_{\ast}\right\Vert_{p}\E\left\vert \xi^{T}X\right\vert -
\E\left\vert e\right\vert\right)^{2} \\
&\geq \E\left\vert e\right\vert^{2} - \left(\E\left\vert
e\right\vert\right)^{2} = \text{Var}\left[\left\vert e\right\vert\right].
\end{align*}
Combining the above inequality with (\ref{Linear_Upper}), we obtain, 
\begin{eqnarray*}
\sup_{\xi \in \mathbb{R}^{d}}\left\{ \sigma^2\xi ^{T}Z-\E\left\Vert e\xi
-(\xi ^{T}X)\beta_{\ast }\right\Vert _{p}^{2}\right\} \leq \frac{%
\sigma^2\left\Vert Z\right\Vert_{q}^{2}}{\text{Var}\left\vert
  e\right\vert}. 
\end{eqnarray*}
Consequently, 
\begin{align*}
nR_{n}(\beta _{\ast })\overset{D}{\longrightarrow }\ L_{1}:=\ \max_{\xi \in 
\mathbb{R}^{d}}\left\{ \sigma\xi ^{T}Z-\E\left\Vert e\xi -(\xi ^{T}X)\beta
_{\ast }\right\Vert _{p}^{2}\right\} \overset{D}{\leq } \frac{\E[e^{2}]}{%
\E[e^{2}] - (\E\left\vert e\right\vert)^{2} } \Vert {Z}\Vert_{q}^2.
\end{align*}
If random error $e$ is normally distributed, then 
\begin{equation*}
nR_{n}(\beta _{\ast }) \lesssim_{D} \frac{\pi}{\pi - 2}\Vert {Z}%
\Vert_{q}^2,
\end{equation*}
thus establishing the desired upper bound.
\end{proof}

\begin{proof}[Proof of Theorem \protect\ref{Thm-GLM-WPF}]
Under null hypothesis $H_0,$ the training samples $(X_1,Y_1),%
\ldots,(X_n,Y_n) $ are produced from the logistic regression model with
parameter $\beta_\ast$. As $\beta_\ast$ minimizes the expected
log-exponential loss $l(x,y;\beta),$ the corresponding optimality condition
is $\E[h(X,Y;\beta_\ast)] = \mathbf{0}, $ where 
\begin{align*}
h(x,y;\beta_\ast) = \frac{-yx}{1+\exp(y\beta_\ast x)}.
\end{align*}
As $\E\Vert h(X,Y;\beta_\ast)\Vert_2^2 \leq \E\Vert X \Vert_2^2$ is finite,
Assumption A2') is satisfied. Let $I_d$ denote $d \times d$ identity matrix.
While 
\begin{align*}
D_xh(x,y;\beta_\ast) = \frac{-yI_d}{1+\exp(y\beta_\ast^Tx)} +
  \frac{x\beta_\ast^T}{(1+\exp(y\beta_\ast^Tx))(1+\exp(-y\beta_\ast^Tx))} 
\end{align*}
is continuous (as a function of $x$) for every $y,$ it is also true that 
\begin{align*}
\Pr \left( \left\Vert \xi^T D_xh(X,Y;\beta_\ast)\right\Vert_p = 0 \right) = \Pr
\left( Y\big( 1+\exp(-Y\beta_\ast^T X)\big) \xi = (\xi^TX)\beta \right) = 0,
\end{align*}
for any $\xi \neq \mathbf{0},$ thus satisfying Assumption A4'). As all the
conditions required for the convergence in distribution in Lemma \ref%
{Lem-Reg-RWP-Eq-1} are satisfied, we obtain, 
\begin{align*}
\sqrt{n}R_n(\beta_\ast) \Rightarrow \sup_{\xi \in A} \xi^TZ,
\end{align*}
where $Z \sim \mathcal{N}(\mathbf{0}, \E[XX^T/(1+\exp(Y\beta_\ast^TX))^2])$
as a consequence of Lemma \ref{Lem-Reg-RWP-Eq-1}. Here, the set $A = \{ \xi
\in \mathbb{R}^d: \text{ess}\ \text{sup}\Vert \xi^TD_xh(X,Y;\beta_\ast)
\Vert \leq 1\}.$

\noindent \textit{Proof of the stochastic upper bound in Theorem \ref%
{Thm-GLM-WPF}:} First, we claim that $A$ is a subset of the norm ball $\{
\xi \in \mathbb{R}^d: \Vert \xi \Vert_p \leq 1\}.$ To establish this, we
observe that, 
\begin{align}
\left \Vert \xi^T D_xh(X,Y;\beta_\ast) \right \Vert_p &\geq \left \Vert 
\frac{-Y\xi}{1 + \exp(Y\beta_\ast^TX)} \right \Vert_p - \left \Vert \frac{%
(\xi^TX)\beta_\ast}{\big( 1 + \exp(Y\beta_\ast^TX) \big)\big( 1 +
\exp(Y\beta_\ast^TX)\big)} \right \Vert_p  \notag \\
&\geq \left( \frac{1}{1 + \exp(Y\beta_\ast^TX)} - \frac{\Vert X \Vert_q
\Vert \beta_\ast \Vert_p}{\big(1 + \exp(Y\beta_\ast^TX)\big)\big(1 +
\exp(-Y\beta_\ast^TX)\big)}\right)\Vert \xi \Vert_p,  \label{Inter-Bnd-LogR}
\end{align}
because $Y \in \{+1,-1\},$ and due to H\"{o}lder's inequality $\vert \xi^TX
\vert \leq \Vert \xi \Vert_p \Vert X \Vert_q.$ If $\xi \in \mathbb{R}^d$ is
such that $\Vert \xi \Vert_p = (1-\epsilon)^{-2} > 1$ for a given $\epsilon
> 0,$ then following (\ref{Inter-Bnd-LogR}), $\Vert \xi^T D_xh(X,Y)\Vert_p >
1, $ whenever 
\begin{align*}
(X,Y) \in \Omega_\epsilon := \left\{ (x,y): \frac{\Vert x \Vert_q \Vert
\beta_\ast \Vert_p}{1+\exp(-y\beta_\ast^Tx)} \leq \frac{\epsilon}{2}, \ 
\frac{1}{1+\exp(y\beta_\ast^Tx)} \geq 1 - \frac{\epsilon}{2} \right\}.
\end{align*}
Since $X$ has positive density almost everywhere, the set $\Omega_\epsilon$
has positive probability for every $\epsilon > 0.$ Thus, if $\Vert \xi
\Vert_p > 1,$ $\| \xi^TD_xh(X,Y;\beta_\ast)\|_p > 1$ with positive
probability. Therefore, $A$ is a subset of $\{ \xi : \Vert \xi \Vert_p \leq
1\}.$ Consequently, 
\begin{align*}
L_3 := \sup_{\xi \in A} \xi^T Z \ \overset{D}{\leq} \ \sup_{\xi: \Vert \xi
\Vert_p \leq 1} \xi^TZ = \Vert Z \Vert_q.
\end{align*}
If we let $\tilde{Z} \sim \mathcal{N}(\mathbf{0}, \E[XX^T]),$ then $\text{Cov}%
[\tilde{Z}] - \text{Cov}[Z]$ is positive definite. As a result, $L_3$ is
stochastically dominated by $L_4 := \Vert \tilde{Z} \Vert_q,$ thus verifying
the desired stochastic upper bound in the statement of Theorem \ref%
{Thm-GLM-WPF}.
\end{proof}

\begin{proof}[\textbf{Proof of Theorem \protect\ref{Thm-RWP-UB-d-Growing}}]
Instead of characterizing the exact weak limit, we will find a stochastic
upper bound for $R_n(\beta_\ast).$ The RWP function, as in the proof of
Theorem \ref{Thm-WPF}, admits the following dual representation (see %
\eqref{RWP-Dual-Lin-Log-Reg}): 
\begin{align*}
R_n(\beta_\ast) &= \sup_{\lambda} \left\{ -\frac{1}{n} \sum_{i=1}^n
\sup_{x^{\prime }} \left\{ \lambda^T(Y_i - \beta_\ast^Tx^{\prime })x^{\prime
}-\Vert x^{\prime }- X_i \Vert_\infty^2\right\}\right\} \\
&= \sup_{\lambda} \left\{ -\lambda^T \frac{Z_n}{\sqrt{n}} - \frac{1}{n}%
\sum_{i=1}^n \sup_{\Delta} \bigg\{ e_i \lambda^T\Delta -
(\beta_\ast^T\Delta)(\lambda^T X_i) - \big( \Vert \Delta \Vert_\infty^2 +
(\beta_\ast^T\Delta)(\lambda^T\Delta)\big)\bigg\} \right\},
\end{align*}
where $Z_n = n^{-1/2} \sum_{i=1}^n e_iX_i,$ $e_i = Y_i - \beta_\ast^T X_i.$
In addition, we have changed the variable from $x^{\prime }- X_i = \Delta.$
If we let $\zeta = \sqrt{n}\lambda,$ then 
\begin{align*}
n&R_n(\beta_\ast) = \sup_{\zeta} \left\{ -\zeta^TZ_n - \frac{1}{\sqrt{n}}
\sum_{i=1}^n \sup_{\Delta}\bigg\{ e_i \zeta^T\Delta -
(\beta_\ast^T\Delta)(\zeta^T X_i) - \big( \sqrt{n} \Vert \Delta
\Vert_\infty^2 + (\beta_\ast^T\Delta)(\zeta^T\Delta)\big) \bigg\} \right\} \\
&\leq \sup_{\zeta} \left\{ -\zeta^TZ_n - \frac{1}{\sqrt{n}} \sum_{i=1}^n
\sup_{\Vert\Delta\Vert_\infty}\bigg\{ \left\Vert e_i \zeta^T - (\zeta^T X_i)
\beta_\ast^T\right\Vert_1 \Vert \Delta \Vert_\infty - \sqrt{n}\left( 1 + 
\frac{\Vert\beta_\ast\Vert_1 \Vert \zeta \Vert_1}{\sqrt{n}} \right) \Vert
\Delta \Vert_\infty^2\bigg\} \right\},
\end{align*}
where we have used H\"{o}lder's inequality thrice to obtain the upper bound.
If we solve the inner supremum over the variable $\Vert \Delta \Vert,$ we
obtain, 
\begin{align*}
nR_{n}(\beta _{\ast }) & \leq\sup_{\zeta }\left\{ -\zeta ^{T}Z_{n}-\frac{1}{%
\sqrt{n}}\sum_{i=1}^{n} \frac{\left\Vert e_i \zeta - (\zeta^T X_i)
\beta_\ast\right\Vert_1^2}{4\sqrt{n} \left(1 + \Vert\beta_\ast\Vert_1 \Vert
\zeta \Vert_1n^{-1/2}\right)} \right\} \\
& \leq \sup_{a \geq 0}\sup_{\zeta: \Vert \zeta \Vert_1 = 1} \left\{
-a\zeta^TZ_n - \frac{a^2}{4 \left(1 + a\Vert\beta_\ast\Vert_1 n^{-1/2}\right)%
} \frac{1}{n} \sum_{i=1}^n \left\Vert e_i \zeta - (\zeta^T X_i)
\beta_\ast\right\Vert_1^2\right\},
\end{align*}
where we have split the optimization into two parts: one over the
magnitude (denoted by $a$), and another over all unit vectors $\zeta.$
Further, due to H\"{o}lder's inequality, we have
$\vert \zeta^T Z_n \vert \leq \Vert Z_n \Vert_\infty$ as
$\Vert \zeta \Vert_1 = 1.$ Therefore, letting
$c_1(n) = \Vert Z_n \Vert_\infty,$
$c_2(n) = \inf_{\zeta: \Vert \zeta \Vert_1 = 1} \frac{1}{n}
\sum_{i=1}^n \left\Vert e_i \zeta - (\zeta^T X_i)
  \beta_\ast\right\Vert_1^2$ and
$c_3(n) = 1 + a\Vert \beta \Vert_1^2n^{-1/2},$ observe that
\begin{align*}
  nR_n(\beta_\ast) \leq \sup_{a \geq 0} \left\{c_1(n)a -
  \frac{c_2(n)}{4c_3(n)} a^2
  \right\} = \frac{c_1^2(n)}{c_2(n)}(1+o(1)) = \frac{\Vert Z_n\Vert_\infty^2(1+o(1))}{ \inf_{\{\zeta:   \Vert \zeta \Vert_1 = 1\}}\frac{1}{n} \sum_{i=1}^n \left\Vert e_i \zeta - (\zeta^T
  X_i) \beta_\ast\right\Vert_1^2}.
\end{align*}
 % \begin{equation*}
% c_1(n) = \Vert Z_n \Vert_\infty \quad \text{ and }\quad c_2(n) = \inf_{\zeta: \Vert \zeta \Vert_1
% = 1} \frac{1}{4n} \sum_{i=1}^n \left\Vert e_i \zeta - (\zeta^T X_i)
% \beta_\ast\right\Vert_1^2.
% \end{equation*}
% As $\Vert \beta_\ast \Vert_1 n^{-1/2} \rightarrow 0$ when $n \rightarrow
% \infty,$ we have $c_2(n) \rightarrow 0.$ Therefore, the above supremum over $%
% a$ is attained at $a = c_1(n)/2c_3(n) + o(1)$ when $n \rightarrow \infty.$ 
% % As a result,
% % \begin{align*}
% %   c_1 a- \frac{c_3 a^2}{1-c_2a} = \frac{c_1^2}{4c_3} + o(1),
% % \end{align*}
% % when $c_2 \rightarrow 0.$
% Consequently, 
% \begin{align}
% nR_n(\beta_\ast) \leq \frac{\Vert Z_n\Vert_\infty^2}{ \inf_{\{\zeta: \Vert
% \zeta \Vert_1 = 1\}}\frac{1}{n} \sum_{i=1}^n \left\Vert e_i \zeta - (\zeta^T
% X_i) \beta_\ast\right\Vert_1^2} + o(1).  \label{HD-Inter}
% \end{align}
Since
$\left\Vert e_i\zeta - (\zeta^T X_i) \beta_\ast\right\Vert_1^2 \geq
\left( \left\vert e_{i}\right\vert \left\Vert {\zeta } \right\Vert
  _{1}-\left\vert \zeta ^{T}X_{i}\right\vert \left\Vert \beta _{\ast
    }\right\Vert _{1}\right) ^{2},$ the denominator, $c_2(n),$ can be
lower bounded as follows:
\begin{align*}
  c_2(n) &:= \inf_{\zeta: \Vert \zeta \Vert_1 = 1}\E_{\Pr_n}\left\Vert e\zeta - (\zeta^T X)
\beta_\ast\right\Vert_1^2 \geq \inf_{\zeta: \Vert \zeta \Vert_1 = 1}
           \E_{\Pr_n}\left[ \left( \vert e \vert - \vert \zeta^TX\vert
           \Vert \beta_\ast \Vert_1\right)^2\right]\\
  &\geq \E_{\Pr_n}\left[ \inf_{\zeta: \Vert \zeta \Vert_1 = 1}\E_{\Pr_n}\left[\left( \vert e \vert - \vert \zeta^TX\vert
           \Vert \beta_\ast \Vert_1\right)^2 \,\vert\, X \right]\right] \geq
    \E_{\Pr_n}\left[ \inf_{c \in \mathbb{R}}\E_{\Pr_n}\left[\left( \vert e
    \vert - c\right)^2 \,\vert\, X \right]\right]. 
\end{align*}
Since $e_i$ and $X_i$ are independent and
$\min_{c} \E[(Z-c)^2] = \text{Var}[Z]$ for any random variable $Z,$ we
obtain that $c_2(n) \geq \text{Var}_n\vert e \vert.$
Therefore
$nR_{n}(\beta _{\ast })\leq {\left\Vert Z_{n}\right\Vert
  _{\infty}^{2}(1+o(1))}/{ \text{Var}_n\left\vert e \right\vert}.$ 
% \begin{equation*}
% nR_{n}(\beta _{\ast })\leq \frac{\left\Vert Z_{n}\right\Vert _{\infty }^{2}}{
% \text{Var}_n\left\vert e \right\vert}. 
% \end{equation*}
% % The second claim is a direct consequence of Corollary 2.1 in
% % \cite{chernozhukov_central_2012} when $X$ has sub-Gaussian
% % tails. Finally, the last claim is the special example of computing the
% % $(1-\alpha)-$quantile of $%
% % \Vert Z \Vert_\infty$ for $Z \sim \mathcal{N}(0,I_d).$ Here,
% Since $d/n > 8$ and $d,n$ are such that $\log^2d/n \rightarrow 0$ as
% $n \rightarrow \infty,$ we have from Lemma 1(iii) of
% \cite{belloni_square-root_2011} that the $(1-\alpha)-$quantile of
% $\Vert Z_n \Vert_\infty/\E_{\Pr_n}[e^2]$ is bounded from above by
% $\Phi^{-1}(1-\alpha/(2d)),$ $n \rightarrow \infty.$
\end{proof}

\section{Strong duality for the linear semi-infinite program resulting
  from the RWP function}
\label{AppSec-Str-Duality} In the main body of the paper, we have
utilized strong duality of linear semi-infinite
programs % in two contexts: 1)
to derive a dual representation of the RWP function in order to
perform asymptotic analysis (see Proposition
\ref{Prop-RWP-Duality}).  % and 2) to
% derive distributional robust representations (see Proposition \ref%
% {Prop-Duality}).
Establishing strong duality in this context relies on the following
well-known result on problem of moments (\cite{Isii1962,newey_higher_2004}).

\noindent \textbf{The problem of moments.} Let $\Omega$ be a nonempty Borel
measurable subset of $\mathbb{R}^m,$ which, in turn, is endowed with the
Borel sigma algebra $\mathcal{B}_{\Omega}.$ Let $X$ be a random vector
taking values in the set $\Omega,$ and $f = (f_1,\ldots,f_k): \Omega
\rightarrow \mathbb{R}^k$ be a vector of moment functionals. Let $\mathcal{P}%
_\Omega$ and $\mathcal{M}^+_\Omega$ denote, respectively, the set of
probability and non-negative measures, respectively on $(\Omega, \mathcal{B}_\Omega)$ such
that the Borel measurable functionals $\phi, f_1, f_2,\ldots,f_k,$ defined
on $\Omega,$ are all integrable. Given a real vector $q = (q_1,\ldots,q_k),$
the objective of the problem of moments is to find the worst-case bound, 
\begin{align}
v(q) := \sup \big\{ \E_\mu[\phi(X)]: \E_\mu[f(X)] = q, \ \mu \in \mathcal{P}%
_\Omega \big\}.  \label{Mom-Prob}
\end{align}
If we let $f_0 = \mathbf{1}_\Omega,$ it is convenient to add the constraint, 
$\E_\mu[f_0(X)] = 1,$ by appending $\tilde{f} = (f_0, f_1,\ldots, f_k), \ 
\tilde{q} = (1,q_1,\ldots,q_k),$ and consider the following reformulation of
the above problem: 
\begin{align}
v(q) := \sup \left\{ \int \phi(x) d\mu(x): \int \tilde{f}(x)d\mu(x) = \tilde{%
q}, \ \mu \in \mathcal{M}^+_\Omega \right\}.  \label{Mom-Prob-II}
\end{align}
Then, under the assumption that a certain Slater's type of condition is
satisfied, one has the following equivalent dual representation for the
moment problem \eqref{Mom-Prob-II}. See Theorem 1 (and the discussion of
Case [I] following Theorem 1) in \cite{Isii1962} for a proof of the
following result:

\begin{proposition}
\label{Prop-Moment-Prob} Let $\mathcal{Q}_{\tilde{f}} = \big\{ \int \tilde{f}%
(x)d\mu(x): \mu \in \mathcal{M}^+_\Omega\big\}.$ If $\tilde{q} =
(1,q_1,\ldots,q_k)$ is an interior point of $\mathcal{Q}_{\tilde{f}},$ then 
\begin{align*}
v(q) = \inf \left\{ \sum_{i=0}^k a_iq_i: \ a_i \in \mathbb{R}, \
\sum_{i=0}^k a_i \tilde{f}_i(x) \geq \phi(x) \text{\ for all } x \in
\Omega \right\}.
\end{align*}
\end{proposition}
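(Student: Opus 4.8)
The plan is to establish weak duality directly and then obtain the reverse inequality by a finite-dimensional separating hyperplane argument, with the interior (Slater-type) hypothesis on $\tilde{q}$ entering precisely to exclude a degenerate separator. First I would dispose of weak duality: for any primal-feasible $\mu$ and any dual-feasible $a = (a_0,\dots,a_k)$ (so that $\sum_i a_i \tilde{f}_i(x) \geq \phi(x)$ pointwise on $\Omega$), integrating this pointwise inequality against $\mu$ and using $\int \tilde{f}_i \, d\mu = \tilde{q}_i$ gives $\int \phi \, d\mu \leq \sum_i a_i \tilde{q}_i$. Taking the supremum over $\mu$ and the infimum over $a$ yields $v(q) \leq \inf\{\sum_i a_i \tilde{q}_i\}$. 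The substance of the proof is the opposite inequality, and here I may assume $v(q) < \infty$ (the case $v(q) = +\infty$, which forces dual infeasibility, being handled separately).

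The key device is to pass from the infinite-dimensional space of measures to the finite-dimensional moment space via the map $\mu \mapsto \big(\int \tilde{f}\, d\mu, \int \phi\, d\mu\big) \in \mathbb{R}^{k+1} \times \mathbb{R}$. Its image $\mathcal{K}$ is a convex cone (closed under addition and nonnegative scaling of measures in $\mathcal{M}^+_\Omega$), and I would work with its downward closure $\mathcal{K}_- = \{(y,s): \exists\, \mu \in \mathcal{M}^+_\Omega,\ y = \int \tilde{f}\, d\mu,\ s \leq \int \phi\, d\mu\}$, which is convex and contains the origin. By definition of $v(q)$, the point $p_\varepsilon = (\tilde{q}, v(q) + \varepsilon)$ lies outside $\mathcal{K}_-$ for each $\varepsilon > 0$, so a finite-dimensional separation theorem produces $(a,b) \neq 0$ with $a^T y + b s \leq a^T \tilde{q} + b(v(q)+\varepsilon)$ for all $(y,s) \in \mathcal{K}_-$. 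Exploiting the cone structure — replacing $(y,s)$ by $t(y,s)$ and letting $t \to \infty$, then $t \to 0$ — upgrades this to the homogeneous pair $a^T y + b s \leq 0$ on $\mathcal{K}_-$ together with $a^T \tilde{q} + b(v(q)+\varepsilon) \geq 0$. Feeding in $(\mathbf{0}, s)$ with $s \to -\infty$ forces $b \geq 0$.

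The main obstacle — and the only place the interior hypothesis is used — is showing $b > 0$, i.e. ruling out the vertical separating hyperplane. If $b = 0$, then $a^T y \leq 0$ for all $y$ in the cone $\mathcal{Q}_{\tilde{f}}$ while $a^T \tilde{q} \geq 0$, whence $a^T \tilde{q} = 0$; since $\tilde{q}$ is interior to $\mathcal{Q}_{\tilde{f}}$, the point $\tilde{q} + \delta a$ lies in $\mathcal{Q}_{\tilde{f}}$ for small $\delta > 0$, giving $\delta \|a\|_2^2 \leq 0$ and hence $a = 0$, contradicting $(a,b) \neq 0$. With $b > 0$ I would normalize $b = 1$; then testing $a^T \int \tilde{f}\, d\mu + \int \phi\, d\mu \leq 0$ against Dirac measures $\delta_x$, $x \in \Omega$, yields $\sum_i (-a_i)\tilde{f}_i(x) \geq \phi(x)$ pointwise, so that $\bar{a} := -a$ is dual feasible, while $a^T \tilde{q} + v(q) + \varepsilon \geq 0$ rearranges to $\sum_i \bar{a}_i \tilde{q}_i = -a^T \tilde{q} \leq v(q) + \varepsilon$.

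Letting $\varepsilon \downarrow 0$ then gives $\inf\{\sum_i a_i \tilde{q}_i\} \leq v(q)$, which together with weak duality completes the proof. Throughout, the integrability conditions built into the definition of $\mathcal{M}^+_\Omega$ guarantee that the moment map is well defined, and the reduction to $\mathbb{R}^{k+2}$ is exactly what makes the separation elementary despite the primal being an infinite-dimensional linear program; the delicate point to state carefully is that $\mathcal{K}_-$ need not be closed, so the separation is only supporting (non-strict), which is why the cone-scaling step is needed to convert it into the homogeneous form on which the interior argument operates.
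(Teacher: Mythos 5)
Your proof is correct, but it relates to the paper differently than you might expect: the paper never proves Proposition \ref{Prop-Moment-Prob} at all — it invokes it as a known result, citing Theorem 1 (and the Case [I] discussion) of \cite{Isii1962}. What you have written is a self-contained proof along the classical lines of that reference: weak duality by integrating the pointwise dual constraint; then, for the substantive inequality, reduction to the finite-dimensional moment cone $\mathcal{K}_- = \{(y,s): y = \int \tilde{f}\, d\mu,\ s \leq \int \phi\, d\mu,\ \mu \in \mathcal{M}^+_\Omega\}$, separation of the point $(\tilde{q}, v(q)+\varepsilon)$ from this convex set, homogenization by cone-scaling, the sign argument giving $b \geq 0$, use of the interior hypothesis exactly and only to exclude the vertical separator $b = 0$ (your computation $a^T(\tilde{q}+\delta a) = \delta\|a\|_2^2 \leq 0$ forcing $a = 0$ is precisely the role the Slater-type condition plays in Isii's argument), the Dirac-measure test converting the homogeneous functional inequality into pointwise dual feasibility of $\bar{a} = -a$, and finally $\varepsilon \downarrow 0$ together with weak duality. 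All steps check out, including your handling of $v(q) = +\infty$ via dual infeasibility. Two points you leave implicit and should state: (i) the interior hypothesis implies $\tilde{q} \in \mathcal{Q}_{\tilde{f}}$, hence primal feasibility and $v(q) > -\infty$, without which the point $(\tilde{q}, v(q)+\varepsilon)$ is not even defined; and (ii) Dirac measures $\delta_x$, $x \in \Omega$, belong to $\mathcal{M}^+_\Omega$ because $\phi$ and the $f_i$ are real-valued on $\Omega$, which is what licenses testing the separating functional against them. What your route buys is a self-contained exposition that makes visible where each hypothesis enters; what the paper's route buys is brevity, at the cost of deferring the entire argument to an external 1962 reference.
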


In the rest of this section, we recast the dual reformulation of RWP
function (in \eqref{Prop-RWP-Duality}) and the dual reformulation of the
distributional representation in Proposition \ref{Prop-Duality} as
particular cases of the dual representation of the problem of moments in
Proposition \ref{Prop-Moment-Prob}.

\noindent \textbf{Dual representation of RWP function.} Recall from Section %
\ref{Sec-Dual-RWP} that $W$ is a random vector taking values in $\mathbb{R}%
^m $ and $h(\cdot,\theta)$ is Borel measurable.

\begin{proof}[Proof of Proposition \protect\ref{Prop-RWP-Duality}]
For simplicity, we do not write the dependence on parameter $\theta$ in $%
h(u,\theta)$ and $R_n(\theta)$ in this proof; nevertheless, we should keep
in mind that the RWP function is a function of parameter $\theta.$ Given
estimating equation $\E[h(W)] = \mathbf{0},$ recall the definition of the
corresponding RWP function, 
\begin{align*}
R_n &:= \inf \left\{ D_c(\Pr,\Pr_n): \ \E_\Pr\big[ h(W) \big] = \mathbf{0}\right\}
\\
&= \inf \left\{ \E_\pi\big[ c(U,W) \big]: \ \E_\pi \big[ h(U) \big] = \mathbf{0%
}, \ \pi_{_W} = \Pr_n, \ \pi \in \mathcal{P}(\mathbb{R}^m \times \mathbb{R}^m)
\right\},
\end{align*}
where $\pi_{_W}$ denotes the marginal distribution of $W$ and
$\mathbb{P}_n$ is the empirical distribution formed from distinct
samples $\{W_1,\ldots,W_n\}.$ To recast this as a problem of moments
as in \eqref{Mom-Prob}, let $\Omega = \{(u,w) \in \mathbb{R}%
^m \times \{W_1,\ldots,W_n\}: c(u,w) < \infty\},$
\begin{align*}
f(u,w) &= 
\begin{bmatrix}
\mathbf{1}_{_{\{w=W_1\}}}(u,w) \\ 
\mathbf{1}_{_{\{w=W_2\}}}(u,w) \\ 
\vdots \\ 
\mathbf{1}_{_{\{w=W_n\}}}(u,w) \\ 
h(u) \\ 
\end{bmatrix}
\quad \quad \text{ and } \quad \quad q = 
\begin{bmatrix}
1/n \\ 
1/n \\ 
\vdots \\ 
1/n \\ 
\mathbf{0} \\ 
\end{bmatrix}%
.
\end{align*}
Further, let $\phi(u,w) = -c(u,w),$ for all $(u,w) \in \Omega.$ Then,
\begin{align*}
R_n = -\sup\left\{ \E_\pi\big[\phi(U,W)\big]: \ \E_\pi \big[ f(U,W) \big] = q,
\ \pi \in \mathcal{P}_\Omega \right\},
\end{align*}
is of the same form as \eqref{Mom-Prob}. Since the constraints
$\mathbb{E}_\pi[\mathbf{1}_{\{w = W_i\}}(U,W)] = 1/n,$ for
$i = 1,\ldots,n,$ together specify that $\mathbb{P}_\pi(\Omega) = 1,$
the constraint that $E_\pi[\mathbf{1}_\Omega(U,W)] = 1$ is
redundant. Moreover, as $\{\mathbf{0\}}$ lies in the interior of
convex hull of the range $\{h(u): (u,w) \in \Omega \},$ observe that
the set
$\mathcal{Q}_{f} := \{ \int fd\mu: \mu \in \mathcal{M}^+_{\Omega}\}$
is simply $\mathbb{R}^n_+ \times \mathbb{R}.$ Then it is immediate
that the Slater's condition $q \in \text{int}(\mathcal{Q}_f)$ is
satisfied for the moment problem,
\begin{align*}
  R_n =  -\sup\left\{ \int \phi(u,w) d\mu(u,w): \int f(u,w)d\mu(u,w) = q, \mu \in \mathcal{M}_{\Omega}^+\right\}. 
\end{align*}
Consequently, we obtain the following dual representation of $R_n$ due
to Proposition \ref{Prop-Moment-Prob}:
\begin{eqnarray*}
R_n &=& -\inf_{a_i \in \mathbb{R}} \left\{ \frac{1}{n} \sum_{i=1}^n
        a_i:  \ \sum_{i=1}^na_i\mathbf{1}_{_{\{ w = W_i\}}}(u,w)
        %\right.
%   \\
% & &\quad\quad\quad\quad\quad\quad\quad\quad\quad\quad\quad \left.
+ \sum_{i=n+1}^k a_i h_i(u) \geq -c(u,w), \text{ for all } (u,w) \in \Omega
\right\} \\
&=& -\inf_{a_i \in \mathbb{R}} \left\{ \frac{1}{n} \sum_{i=1}^n a_i: \
a_i \geq \sup_{u: c(u,W_i) < \infty} \left\{-c(u,W_i) - \sum_{i=n+1}^k
a_i h_i(u) \right\}\right\}.
\end{eqnarray*}
As the inner supremum is not affected even if we take supremum over $\{ u:
c(u,W_i) = \infty\},$ after letting $\lambda = (a_{n+1},\ldots,a_k)$ for
notational convenience, we obtain 
\begin{align}
R_n &= \sup_{\lambda} \left\{ \frac{1}{n} \sum_{i=1}^n \inf_{u \in \mathbb{R}%
^m} \left\{ c(u,W_i) + \lambda^T h(u) \right\}\right\}.  \label{RWP-Dual-Rep}
\end{align}
As $\lambda$ is a free variable, we flip the sign of $\lambda$ to arrive at
the statement of Proposition \ref{Prop-RWP-Duality}. This completes the
proof.
\end{proof}

\section{Exchange of sup and inf in the DRO formulation
  \eqref{RWPI-DRO}}
\label{Sec-App-Sup-Inf}
The inf-sup exchange in Proposition \ref{Prop_min_max} below is
obtained by suitably modifying the inf-sup exchange in \cite[Theorem
2]{Confidence_prep} and its proof to accommodate more relaxed
assumptions than in \cite{Confidence_prep}. The sequence of steps in
the proof of Proposition \ref{Prop_min_max} is similar to that of
\cite[Theorem 2]{Confidence_prep} and is given here for completeness.

\begin{proposition}
  \label{Prop_min_max} For a given probability distribution
  $\mathbb{Q},$ define
  \[g(\beta) :=\sup_{\Pr:\, \mathcal{D}_c(\Pr,\mathbb{Q}) \leq
      \,\delta}\E_{\Pr}\left[ l\big( %
      X,Y;\beta \big)\right],\] for $\beta \in \mathbb{R}^d.$ Suppose
  that $g( \cdot) $ is real-valued and the level set
  $\{\beta \in \mathbb{R}^d:g\left( \beta \right) \leq b\}$ is bounded
  for every $b\in \mathbb{R}$.  In addition, suppose that
  $\E_{\Pr}\left[ l\big(X,Y;\beta \big)\right] $ is convex and lower
  semicontinuous in the variable $\beta,$ for every
  $\Pr \in \mathcal{U}_\delta(\mathbb{Q}) := \{\Pr:\,
  \mathcal{D}_c(\Pr,\mathbb{Q}) \leq \,\delta \}.$ % throughout
  % $\kappa_{b}$.
  Then,
\begin{equation*}
\inf_{\beta \in \mathbb{R}^{d}}\sup_{\Pr: \mathcal{D}_{c}(\Pr,\mathbb{Q})\leq \delta
}\E_{\Pr}\left[ l\big(X,Y;\beta \big)\right] =\sup_{\Pr: \mathcal{D}_{c}(\Pr,\mathbb{Q}) %
\leq \delta }\inf_{\beta \in \mathbb{R}^{d}}\E_{\Pr}\left[ l\big(X,Y;\beta \big)%
\right] .
\end{equation*}
\end{proposition}

\begin{proof}
  We begin by defining the sequence of approximation problems,
  \begin{align*}
    g_N(\beta) := \sup_{\Pr \in \mathcal{U}_\delta^N(\mathbb{Q})}
    \E_{\Pr}\left[ l\big(X,Y;\beta \big)\right],
  \end{align*}
  where $N = 1,2,\ldots,$ and
  \begin{align*}
    \mathcal{U}_\delta^N(\mathbb{Q}) = \left\{ \Pr \in \mathcal{P}(\mathcal{K}_N):
    \mathcal{D}_c(\Pr,\mathbb{Q}) \leq \delta \right\},
  \end{align*}
  with $\mathcal{P}(\mathcal{K}_N)$ denoting the set of probability
  distributions over the set
  $\mathcal{K}_N := \left\{ x : \Vert x \Vert_2 \leq N\right\}.$ Then,
  due to the compactness of the set
  $\mathcal{U}^N_\delta(\mathbb{Q}),$ we obtain
  \begin{align*}
    \inf_{\beta \in \mathbb{R}^d} g_N(\beta) = \inf_{\beta \in \mathbb{R}^d} 
    \sup_{\Pr \in \mathcal{U}_\delta^N(\mathbb{Q}) }
    \E_{\Pr}\left[ l\big(X,Y;\beta \big)\right] 
    = \sup_{\Pr \in \mathcal{U}_\delta^N(\mathbb{Q}) } \inf_{\beta \in \mathbb{R}^d}
    \E_{\Pr}\left[ l\big(X,Y;\beta \big)\right], 
  \end{align*}
  as a consequence of Sion's minimax theorem
  \cite{sion_general_1958}. Therefore, with $g_N(\cdot)$ being an
  increasing sequence of functions, we have
  \begin{align}
    \lim_{N \rightarrow \infty} \inf_{\beta \in \mathbb{R}^d} g_N(\beta) 
    &= \sup_{N \geq 1} \inf_{\beta \in \mathbb{R}^d} g_N(\beta) 
      = \sup_{N \geq 1}\sup_{\Pr \in \mathcal{U}_\delta^N(P_n) } \inf_{\beta \in \mathbb{R}^d}
      \E_{\Pr}\left[ l\big(X,Y;\beta \big)\right] \nonumber\\
    &\leq \sup_{\Pr \in \mathcal{U}_\delta (\mathbb{Q}) } \inf_{\beta \in \mathbb{R}^d}
      \E_{\Pr}\left[ l\big(X,Y;\beta \big)\right]
      \leq  \inf_{\beta \in \mathbb{R}^d} \sup_{\Pr \in \mathcal{U}_\delta (\mathbb{Q})}
      \E_{\Pr}\left[ l\big(X,Y;\beta \big)\right] \label{minmax-inter}\\
    &= \inf_{\beta \in \mathbb{R}^d} g(\beta). 
  \nonumber
  \end{align}
  The rest of the proof is divided into three technical steps:
  
  \textbf{Step 1:} In this step, we show that the sequence of
  functions $\{g_{_N}(\cdot): N \geq 1\}$ converges pointwise to the
  function $g(\cdot),$ as $N \rightarrow \infty.$ Since $g_N(\beta)$
  is increasing in $N,$ we have that $g_N(\beta)$ converges as
  $N \rightarrow \infty,$ for every $\beta.$ Let the function
  $g^\ast(\cdot)$ denote the pointwise limit, 
  $g^\ast(\cdot) = \lim_{N \rightarrow \infty} g_N(\cdot).$ With
  $g_N(\cdot) \leq g(\cdot)$ for every $N,$ we have
  $g^\ast(\beta) \leq g(\beta).$ Since $g(\cdot)$ is real-valued, we
  consequently have $g^\ast(\beta) \leq g(\beta) < +\infty,$ for every
  $\beta \in
  \mathbb{R}^d.$ % Consequently, $g^\ast(\beta) < +\infty$ as
  % well.

  To show that $g^\ast(\beta)$ necessarily equals $g(\beta)$ for every
  $\beta,$ we argue via contradiction as follows: Suppose that
  $\varepsilon := g(\beta) - g^\ast(\beta) > 0$ for some
  $\beta \in \mathbb{R}^d.$ Consider any
  $\mathbb{P}^\prime \in \mathcal{U}_\delta(P_n)$ such that
  $\E_{\Pr^\prime}\left[ l(X,Y;\beta)\right] \in (g(\beta) -
  \varepsilon/2,g(\beta)].$ With $g(\beta)$ being finite, there exists
  $N_0$ sufficiently large such that
  \begin{align*}
    \E_{\Pr^\prime}\left[ l(X,Y;\beta) \mathbb{I}(\Vert X \Vert_2 > N)\right]
    < \varepsilon/4  \quad \text{ and }  \quad
    \left[ 1 - \Pr^\prime(\mathcal{K}_N)\right]
    \E_{\mathbb{Q}} \left[ l(X,Y;\beta)\mathbb{I}(\Vert X \Vert_2 \leq N)\right]
    > -\varepsilon/4,
  \end{align*}
  for all $N > N_0.$ From $\mathbb{P}^\prime,$ we construct a measure
  $\mathbb{P}_N^\prime \in \mathcal{U}_\delta^N(\mathbb{Q})$ by
  letting,
  \begin{align*}
    \mathbb{P}_N^\prime(\cdot) = \Pr^\prime(\cdot) + \left[ 1 - \Pr^\prime(\mathcal{K}_N)\right]
    \frac{\mathbb{Q}(\cdot)}{\mathbb{Q}(\mathcal{K}_N)},
  \end{align*}
  for all $N$ large enough such that $\mathbb{Q}(\mathcal{K}_N) > 0.$ Then,
  \begin{align*}
    g^\ast(\beta) \geq g_N(\beta) \geq \E_{\Pr^\prime_N}\left[ l(X,Y;\beta)\right]
    > \E_{\Pr^\prime}\left[ l(X,Y;\beta)\right] - \varepsilon/2,
  \end{align*}
  for all $N > N_0.$ With
  $\E_{\Pr^\prime}\left[ l(X,Y;\beta)\right] \in (g(\beta) -
  \varepsilon/2,g(\beta)],$ we then have
  $g^\ast(\beta) > g(\beta) - \varepsilon,$ which leads to a
  contradiction to the assumption that
  $\varepsilon := g(\beta) - g^\ast(\beta) > 0.$ This verifies that
  the pointwise limit $g^\ast(\cdot) = g(\cdot).$

  \textbf{Step 2:} In this next step, we show that the sequence of
  functions $\{g_N(\cdot): N \geq 1\}$ epiconverges to the function
  $g(\cdot),$ as $N \rightarrow \infty.$ See, for example,
  \cite[Definition 7.1]{rockafellar2009variational} for a definition
  of epiconvergence. To accomplish this step, we first see that for
  every sequence $\{\beta_N: N \geq 1\}$ satisfying
  $\beta_N \rightarrow \beta \in \mathbb{R}^d,$ 
  \begin{align*}
    \liminf_{N \rightarrow \infty} g_N(\beta_N) \geq
    \liminf_{N \rightarrow \infty} g_M(\beta_N) \geq g_M(\beta),
  \end{align*}
  for any positive integer $M.$ Indeed, this is because $g_N(\cdot)$
  is an increasing sequence of functions and $g_M(\cdot),$ being
  pointwise maxima of lower semicontinuous functions, is
  lower semicontinuous. Letting $M \rightarrow \infty,$ we then have
  \begin{align*}
    \liminf_{N \rightarrow \infty} g_N(\beta_N) \geq g(\beta), 
  \end{align*}
  due to the pointwise convergence concluded in Step 1. Next, for any
  $\beta \in \mathbb{R}^d,$ if we pick the sequence $\beta_N = \beta,$
  we have
%  \begin{align*}
  $\lim_{N \rightarrow \infty} g_N(\beta_N) = \lim_{N \rightarrow
    \infty} g_N(\beta) = g(\beta).$
%  \end{align*}
  We therefore have from the epiconvergence characterization in
  \cite[Proposition 7.1]{rockafellar2009variational} that the sequence
  $\{g_N : N \geq 1\}$ epiconverges to the function $g(\cdot).$

  \textbf{Step 3:} In this final step, we show that the optimal values
  $\inf_{\beta \in \mathbb{R}^d} g_N(\beta)$ converge to
  $\inf_{\beta \in \mathbb{R}^d} g(\beta),$ as $N \rightarrow \infty.$
  With $\E_{\Pr}[l(X,Y;\beta)]$ being convex in the variable $\beta,$
  we have that the pointwise maximum $g(\cdot)$ is convex. Combining
  this observation with the level-boundedness of the limiting function
  $g(\cdot),$ we have from \cite[Exercise
  7.32(c)]{rockafellar2009variational} that the sequence
  $\{g_N(\beta): N \geq 1\}$ is eventually level-bounded. Further,
  since the functions $g_N(\cdot), g(\cdot)$ are lower semicontinuous
  and proper, we obtain the desired optimal value convergence,
  \[\inf_{\beta \in \mathbb{R}^d} g_N(\beta) \rightarrow \inf_{\beta
      \in \mathbb{R}^d} g(\beta),\] as a consequence of \cite[Theorem
  7.33]{rockafellar2009variational}.

  The conclusion in Step 3 forces the inequalities in
  (\ref{minmax-inter}) to be equalities, thus rendering the desired
  inf-sup interchange in the statement of Proposition
  \ref{Prop_min_max}.
\end{proof}

\begin{proof}[Proof of Lemma \protect\ref{Lem-inf-sup-exchange-apps}]
  Let us consider linear regression loss function first. Under the
  null hypothesis, $\E\Vert X \Vert_2^2 < \infty$ and
  $\E[e^2] < \infty.$ Therefore, for any $\beta \in \mathbb{R}^d,$
  $\E[l(X,Y;\beta)] = \E[(Y-\beta^TX)^2] < \infty.$ Further, as the
  loss function $l(x,y;\beta)$ is a convex and continuous in the
  variable $\beta,$ we have that $\E_\Pr[l(X,Y;\beta)]$ is convex and
  lower semicontinuous for any $\Pr \in \mathcal{U}_\delta(\Pr_n).$
  Next, the distributionally robust representation in Theorem
  \ref{Cor-lp-good-equiv},
\begin{align*}
g(\beta) = \sup_{\Pr \in \ \mathcal{U}_\delta(\Pr_n)} \E_\Pr[l(X,Y;\beta)] = \left( 
\sqrt{\E_{\Pr_n}[(Y-\beta^TX)^2]} + \sqrt{\delta}\Vert \beta \Vert_p\right)^2
\end{align*}
allows us to conclude that $g(\beta)$ is finite for every
$\beta \in \mathbb{R}^d.$ Further, as $g(\beta) \rightarrow \infty$
when $\Vert \beta \Vert_p \rightarrow \infty$ and $g(\beta)$ is convex
and continuous in $\mathbb{R}^d,$ the level sets
$\{ \beta: g(\beta) \leq b\}$ are compact and nonempty for every
$b > (\sqrt{\E_{\Pr_n}[(Y-\beta_\ast^TX)^2]} + \sqrt{\delta}\Vert
\beta_\ast \Vert)^2.$ This verifies the level-boundedness requirement
in the statement of Proposition \ref{Prop_min_max}.  As all the
conditions in Proposition \ref{Prop_min_max} are satisfied, the sup
and inf in the DRO formulation (\ref{RWPI-DRO}) can be exchanged in
the linear regression example as a consequence of Proposition
\ref{Prop_min_max}. Exactly similar reasoning applies for logistic
regression loss function when $\E\Vert X \Vert_2^2$ is finite.
\end{proof}

\end{document}